\newtheorem{theorem}{Theorem}[section]
\newtheorem{corollary}[theorem]{Corollary}
\newtheorem{lemma}[theorem]{Lemma}
\newtheorem{proposition}[theorem]{Proposition}
\newtheorem{problem}[theorem]{Problem}
\newtheorem{fact}[theorem]{Fact}
\newtheorem{question}[theorem]{Question}
\theoremstyle{definition}
\newtheorem{remark}[theorem]{Remark}
\newcommand{\T}{\mathbb{T}}
\newcommand{\Z}{\mathbb{Z}}
\newcommand{\R}{\mathbb{R}}
\newcommand{\Q}{\mathbb{Q}}
\def\hull#1{\langle#1\rangle}
\DeclareMathOperator{\res}{\hskip-1pt\restriction\hskip-1pt}
\numberwithin{equation}{section}
\begin{document}


\baselineskip=17pt


\title[Precompact Abelian Groups]{Densities and Weights of Quotients of Precompact Abelian Groups}
\author[D. Peng]{Dekui Peng}\thanks{This work is supported by NSFC grant NO. 12271258.}
\email{pengdk10@lzu.edu.cn}
\address{Institute of Mathematics, Nanjing Normal University, Nanjing 210046, China}
\keywords{precompact groups; density; weight; separable}
\subjclass[2020]{Primary 22A05; Secondary  54H11, 54A25}

\begin{abstract}
The topological group version of the celebrated Banach-Mazur problem asks wether every infinite topological group has a non-trivial separable quotient group.
It is known that compact groups have infinite separable metrizable quotient groups.
However, as dense subgroups of compact groups, precompact groups may admit no non-trivial separable quotient groups, so also no non-trivial metrizable quotient groups.
In this paper, we study the least cardinal $\mathfrak{m}$ (resp. $\mathfrak{n}$) such that every infinite precompact abelian group admits a quotient group with density $\leq \mathfrak{m}$ (resp. with weight $\leq \mathfrak{n}$).
It is shown that if $2^{<\mathfrak{c}}=\mathfrak{c}$, then $\mathfrak{m}=\mathfrak{c}$ and $\mathfrak{n}=2^\mathfrak{c}$.

A more general problem is to describe the set $QW(G)$ of all possible weights of infinite proper quotient groups of a precompact abelian group $G$.
We prove that for every subset $E$ of the interval $[\omega, \mathfrak{c}]$, there exists a precompact abelian group $G$ with $QW(G)=E$.
If $\omega\in E$, then $G$ can be chosen to be pseudocompact.
In an appendix, we give an example to show that a non-totally disconnected locally compact group may admit no separable quotient groups. This answers an open problem posed by Leiderman, Morris and Tkachenko.
\end{abstract}

\maketitle

\section{Introduction}\label{Sec1}
The famous Banach-Mazur Problem asks whether every infinite-dimensional Banach space has a  separable infinite-dimensional  quotient  Banach space.
This problem, of course due to Banach and Mazur, is known to be considered in 1930s.
Although the separable quotient problem for general Banach spaces remains open, positive answers for special classes of Banach
spaces have been obtained, for example, the space $C(X)$ of continuous functions on an infinite compact space $X$ (E. Lacy \cite{La} and H.P. Rosenthal \cite{Ros}) and dual spaces of infinite-dimensional Banach spaces (S.A. Argyros, P. Dodos and V. Kanellopoulos \cite{ADK}).
The separable quotient problem for topological groups asking if every infinite topological group admit a non-trivial (or infinite) separable quotient group has been formulated and studied since 2019, see \cite{GM, LMT, LT1, LT2, Mor}.

It is a well-known result of Gleason, Montgomery-Zippin and Yamabe that compact groups as well as connected locally compact groups (indeed, all connected-by-compact locally compact groups) are projective limits of (separable) Lie groups \cite{MZ}; so they all admit non-trivial quotient Lie groups.
In particular, connected locally compact groups admit metrizable and separable connected non-trivial, hence infinite, quotient groups.
It seems that there is no enough evidence to ensure that profinite groups have infinite metrizable quotient groups because their quotient Lie groups are finite.
However, Leiderman, Morris and Tkachenko proved that profinite groups  indeed have such quotient groups \cite{LMT} by showing that every profinite group admits a continuous homomorphism into a countably infinite product of finite groups with infinite image.
The author would like to note that the result can also be obtained from the following simple observation: let $G$ be an infinite profinite group and $H_1>H_2>...>H_n>....$ a strictly decreasing chain of open normal subgroups of $G$.
The intersection $H=\bigcap_{n=1}^\infty H_n$ is a $G_\delta$ set and a closed normal subgroup in $G$.
Thus $G/H$ is metrizable (and separable).
It is evident that $G/H$ is infinite; otherwise the chain $H_1>H_2>...$ stops at some $n$.

Let us recall that a topological group $G$ is called {\em precompact} if for every identity neighborhood $U$ one can find a finite subset $F$ of $G$ such that $G=FU$.
By a theorem of Weil (see \cite{Weil}), a topological group is precompact if and only if it embeds into a compact group as a dense subgroup.
The family of precompact groups seems to be one of the classes of topological groups closest to that of compact groups.
So we naturally expect that precompact groups also have the property.
Unfortunately, even an abelian precompact group may have no non-trivial separable quotient groups.
Such a group was constructed by Leiderman, Morris and Tkachenko in  \cite{LMT}.
It should be noted that metrizable precompact groups are always separable. So their example also shows that precompact abelian groups may admit no non-trivial metrizable quotient groups.
On the other hand, as a dense subgroup of some compact group,  every infinite precompact (abelian) group $G$ admits a continuous homomorphism $\varphi$ into an infinite metrizable compact group with dense image.
So $G/\ker \varphi$ is isomorphic to a dense subgroup of an infinite metrizable compact group as abstract groups.
Since infinite metrizable compact groups has cardinality $\mathfrak{c}$, one has that $d(G/\ker \varphi)\leq |G/\ker \varphi|\leq \mathfrak{c}$.

\begin{proposition} Every infinite precompact (abelian) group admits a quotient group with density $\leq\mathfrak{c}$.\end{proposition}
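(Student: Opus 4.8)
The plan is to reduce the statement to compact groups by Weil's theorem and then pull back to $G$ an infinite compact metrizable quotient of its completion. So let $G$ be an infinite precompact abelian group. By Weil's theorem we may regard $G$ as a dense subgroup of a compact abelian group $K$, and since $G$ is infinite and dense in $K$, the group $K$ is infinite as well. It therefore suffices to find a closed subgroup $N\leq K$ for which the quotient $K/N$ is an infinite compact metrizable group; the quotient homomorphism $\pi\colon K\to K/N$, restricted to $G$, will then do the job.

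To produce such an $N$ in the abelian case one uses Pontryagin duality: the dual group $\widehat K$ is infinite and discrete, hence contains a countably infinite subgroup $A$, and the annihilator $N=\{k\in K:\chi(k)=1\text{ for every }\chi\in A\}$ is a closed subgroup of $K$ with $K/N$ topologically isomorphic to $\widehat A$, which is an infinite compact metrizable group. (For a general, possibly non-abelian, compact $K$ one argues by a simple dichotomy: either some finite-dimensional unitary representation of $K$ has infinite image, yielding an infinite compact Lie --- hence metrizable --- quotient, or every such representation has finite image, so that $K$ embeds in a product of finite groups and is thus profinite, and then the elementary observation recorded in the Introduction supplies an infinite compact metrizable quotient of $K$.)

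Finally, restrict $\pi$ to $G$. Since $G$ is dense in $K$ and $\pi$ is a continuous surjection, $\pi(G)$ is dense in $K/N$; because $K/N$ is infinite and compact it has no isolated points, so $\pi(G)$ --- and hence the quotient group $G/(N\cap G)$ --- is infinite. The subgroup $N\cap G=\ker(\pi|_G)$ is closed in $G$, and $G/(N\cap G)$ is algebraically isomorphic to $\pi(G)\leq K/N$; since an infinite compact metrizable group has cardinality $\mathfrak{c}$,
\[
d\bigl(G/(N\cap G)\bigr)\leq\bigl|G/(N\cap G)\bigr|=|\pi(G)|\leq|K/N|=\mathfrak{c}.
\]
The only step carrying genuine content is the construction of $N$; everything after it is soft, since the density bound is extracted purely from the cardinality estimate $|K/N|\leq\mathfrak{c}$ and requires no control over the quotient topology on $G/(N\cap G)$. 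I expect no real obstacle here: the annihilator construction is completely standard in the abelian case, and the non-abelian case is covered by the dichotomy above together with the profinite argument already given in the Introduction.
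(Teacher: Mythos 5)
Your argument is correct and follows essentially the same route as the paper: pass to the compact completion via Weil's theorem, produce an infinite compact metrizable quotient of it (the paper simply asserts the existence of a continuous homomorphism onto such a group with dense image, where you supply the annihilator construction), and then extract the density bound purely from the cardinality estimate $|K/N|=\mathfrak{c}$. No gaps.
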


Since every topological group $G$ satisfies $w(G)\leq 2^{d(G)}$, we also obtain:

\begin{proposition} Every infinite precompact (abelian) group admits a quotient group with weight $\leq2^\mathfrak{c}$.\end{proposition}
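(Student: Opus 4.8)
The plan is to read the statement off directly from the preceding Proposition. That result produces, for any infinite precompact (abelian) group $G$, a quotient group $Q=G/N$ with $d(Q)\le\mathfrak{c}$; concretely $N=\ker\varphi$ for a continuous homomorphism $\varphi$ of $G$ onto a dense subgroup of an infinite metrizable compact group, and one bounds $d(Q)\le|Q|\le\mathfrak{c}$. It remains only to convert the density bound into a weight bound, and for this it suffices to apply the estimate $w(H)\le 2^{d(H)}$, valid for every topological group $H$, to $Q$: this gives $w(Q)\le 2^{d(Q)}\le 2^{\mathfrak{c}}$, which is the assertion.

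The one external ingredient, the inequality $w(H)\le 2^{d(H)}$, holds in fact for every Tychonoff space $H$ (topological groups being Tychonoff): if $D\subseteq H$ is dense with $|D|=\tau$, then every continuous map $H\to[0,1]$ is determined by its restriction to $D$, so there are at most $\mathfrak{c}^{\tau}=2^{\tau}$ of them; evaluating all of them simultaneously embeds $H$ into a Tychonoff cube whose index set has size $\le 2^{\tau}$, hence of weight $\le 2^{\tau}$, and therefore $w(H)\le 2^{\tau}=2^{d(H)}$. Applying this to $Q$ completes the argument.

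I do not expect any genuine obstacle here: the statement is an immediate corollary of the preceding Proposition, recorded only for later reference and to motivate the definition of the cardinal $\mathfrak{n}$. The real work of the paper lies in the opposite direction — pinning down the exact value of $\mathfrak{n}$ (and of $\mathfrak{m}$), and realizing prescribed subsets of $[\omega,\mathfrak{c}]$ as $QW(G)$ — for which the density and weight bounds just established are only the trivial a priori constraints.
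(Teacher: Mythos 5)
Your argument is exactly the paper's: it derives the statement from the preceding proposition (a quotient of density $\le\mathfrak{c}$) together with the general inequality $w(H)\le 2^{d(H)}$ for topological groups, which the paper simply cites from Engelking while you supply the standard cube-embedding proof. No issues.
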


So we arise the following question:

\begin{question} What are the least cardinals $\mathfrak{m}$ and $\mathfrak{n}$ such that every infinite abelian precompact group admits

\begin{itemize}
         \item[(i)] an infinite quotient group with density $\leq \mathfrak{m}$;
         \item[(ii)] an infinite quotient group with weight $\leq \mathfrak{n}$?
\end{itemize}
\end{question}
It follows immediately from the example of Leiderman, Morris and Tkachenko that both $\mathfrak{m}$ and $\mathfrak{n}$ are uncountable.
And by our analysis above, one has $\mathfrak{m}\leq \mathfrak{c}$ and $\mathfrak{n}\leq 2^\mathfrak{c}$.
We especially wonder if the equalities hold.
Note that if we assume {\bf CH}, the continuum hypothesis, then the result of Leiderman-Morris-Tkachenko yields that $\mathfrak{m}=\mathfrak{c}$.
And by carefully analyzing  their construction, one can also obtain that $\mathfrak{n}=2^\mathfrak{c}$ if $\mathfrak{c}^+=2^\mathfrak{c}$.
So, we have:
\begin{proposition}If the generalized continuum hypothesis holds, then $\mathfrak{m}=\mathfrak{c}$ and $\mathfrak{n}=2^\mathfrak{c}$.
\end{proposition}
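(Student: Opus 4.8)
The statement is essentially a corollary of the two remarks preceding it, because \textbf{GCH} supplies both \textbf{CH} and the equality $\mathfrak c^{+}=2^{\mathfrak c}$; my plan is therefore to make those two remarks precise and to isolate the point that really needs work.

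\emph{Showing $\mathfrak m=\mathfrak c$.} The inequality $\mathfrak m\le\mathfrak c$ is Proposition~1.1, so only $\mathfrak m\ge\mathfrak c$ is in question. For this I would invoke the Leiderman--Morris--Tkachenko example \cite{LMT}: an infinite precompact abelian group $G$ with no non-trivial separable quotient. Being infinite, $G$ does admit infinite quotients, and every such quotient $G/N$ is non-separable, i.e.\ $d(G/N)\ge\omega_{1}$. Hence $\mathfrak m\ge\omega_{1}$, which under \textbf{CH} reads $\mathfrak m\ge\mathfrak c$; together with the upper bound this gives $\mathfrak m=\mathfrak c$.

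\emph{Showing $\mathfrak n=2^{\mathfrak c}$.} By Proposition~1.2 one has $\mathfrak n\le 2^{\mathfrak c}$, so it suffices to prove $\mathfrak n\ge\mathfrak c^{+}$ under the hypothesis $\mathfrak c^{+}=2^{\mathfrak c}$. Here ``no separable quotient'' is not by itself enough, since a non-separable precompact abelian group can have weight as small as $\omega_{1}\le\mathfrak c$ (for instance the $\Sigma$-product of $\omega_{1}$ copies of $\mathbb{Z}(2)$). What is needed is a precompact abelian group $G^{*}$ \emph{all of whose infinite quotients have weight $\ge\mathfrak c^{+}$}; granting such a $G^{*}$, the argument of the previous paragraph yields $\mathfrak n\ge\mathfrak c^{+}=2^{\mathfrak c}$, hence equality. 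To produce $G^{*}$ I would rerun the Leiderman--Morris--Tkachenko construction with ``weight'' in place of ``density''. By Weil's theorem, realize $G^{*}$ as a dense subgroup of a compact abelian group $K$ with $|\widehat K|=\mathfrak c^{+}$, say $K=\mathbb{Z}(2)^{\mathfrak c^{+}}$. For a dense subgroup $G\le K$ the continuous characters extend, so $\widehat G=\widehat K$; a closed $N\trianglelefteq G$ gives a quotient of weight $|N^{\perp}|$ with $N=(N^{\perp})^{\perp}$; thus an infinite quotient of weight $\le\mathfrak c$ is recorded by a subgroup $\Gamma$ of $\widehat K$ with $\omega\le|\Gamma|\le\mathfrak c$ that is $G$-closed in the sense $\Gamma=(\Gamma^{\perp}_{K})^{\perp}$. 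Since $(\mathfrak c^{+})^{\mathfrak c}=(2^{\mathfrak c})^{\mathfrak c}=2^{\mathfrak c}=\mathfrak c^{+}$ under this hypothesis, there are at most $\mathfrak c^{+}$ such subgroups, and I would enumerate them in a list of length $\mathfrak c^{+}$ and build $G^{*}=\bigcup_{\alpha<\mathfrak c^{+}}G_{\alpha}$ by transfinite recursion, arranging at the stage that treats a subgroup $\Gamma$ that $G^{*}\cap\Gamma^{\perp}_{K}$ stays small enough that its annihilator in $\widehat K$ strictly contains $\Gamma$ (so $\Gamma$ is not $G^{*}$-closed and no infinite quotient of $G^{*}$ has dual $\Gamma$), while at cofinally many stages adjoining points to a fixed base of $K$ so that $G^{*}$ stays dense. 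Then $w(G^{*})=|\widehat{G^{*}}|=\mathfrak c^{+}$, no infinite quotient of $G^{*}$ has weight $\le\mathfrak c$, and $G^{*}$ can be arranged to have at least one infinite quotient; hence $\mathfrak n\ge\mathfrak c^{+}$.

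\emph{The main obstacle.} The real content is the transfinite recursion just sketched: one must suppress every candidate small-weight quotient by thinning the intersections $G^{*}\cap\Gamma^{\perp}_{K}$ and, simultaneously, keep $G^{*}$ dense in $K$ without reviving a threat handled at an earlier stage, and one must secure that $G^{*}$ still possesses an infinite quotient so that the lower bound on $\mathfrak n$ actually bites. The density steps rely on the fact that each $\Gamma^{\perp}_{K}$ with $\Gamma$ infinite has infinite index in $K$, hence is nowhere dense, so that a non-empty open subset of $K$ is not swallowed by the subgroups accumulated before a given stage; reconciling this with the enumeration — which is precisely the combinatorial core of the Leiderman--Morris--Tkachenko argument, here to be carried out for weight rather than density — is the delicate part, and may well call for a more careful choice of $K$ and of the enumeration than indicated above. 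Once this is in hand, the proposition follows by substituting \textbf{CH} and $\mathfrak c^{+}=2^{\mathfrak c}$, both furnished by \textbf{GCH}, into the two chains of inequalities above.
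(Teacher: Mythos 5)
Your treatment of $\mathfrak m=\mathfrak c$ is exactly the paper's argument: the Leiderman--Morris--Tkachenko group has no non-trivial separable quotient, so under \textbf{CH} every infinite quotient (including the group itself) has density $\geq\omega_1=\mathfrak c$, which together with Proposition 1.1 settles that half. That part is complete and correct.

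The half $\mathfrak n=2^{\mathfrak c}$ contains a genuine gap. You correctly observe that ``no separable quotient'' gives no lower bound on quotient \emph{weights} beyond $\mathfrak c$ and that a group all of whose infinite quotients have weight $\geq\mathfrak c^{+}$ must be produced; but you do not produce it. The transfinite recursion is only sketched, and each of the $\mathfrak c^{+}$ constraints you impose --- that $(G^{*}\cap\Gamma^{\perp}_{K})^{\perp}$ strictly contain $\Gamma$ --- is a \emph{smallness} condition on the final group which can be destroyed by every later point added for density; nothing in the sketch shows the recursion can be closed, and you explicitly defer this, so the lower bound $\mathfrak n\geq\mathfrak c^{+}$ is not established. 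The paper's route (carried out in full under the weaker hypothesis $2^{<\mathfrak c}=\mathfrak c$, which \textbf{GCH} supplies) sidesteps the recursion entirely by working on the dual side: since $w(G/N)=|(G/N)^{*}|$ and $(G/N)^{*}$ is a closed subgroup of $G^{*}$ (Propositions \ref{Prop:wei} and \ref{Prop:exact}), it suffices to exhibit a precompact group all of whose non-trivial closed subgroups have cardinality $2^{\mathfrak c}$, and Proposition \ref{Prop:2.3} constructs one in a single step as the subgroup of $\widehat{\mathbb{Z}^{(\mathfrak c)}}$ generated by $2^{\mathfrak c}$ injective characters whose images lie in the pieces of an almost disjoint family on a Hamel basis of $\mathbb{Q}^{(\mathfrak c)}$; the kernel of any non-zero word in these generators then has size $<\mathfrak c$, forcing every non-trivial closed subgroup to have full size. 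If you wish to salvage your approach, the tractable target is the stronger condition $G^{*}\cap\Gamma^{\perp}_{K}=\mathbf{0}$ for every infinite $\Gamma$ of size $\leq\mathfrak c$, but that still amounts to carrying out the construction you have left open.
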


In the following, we shall show that, in order to obtain the same result, one may only assume that $2^{<\mathfrak{c}}=\mathfrak{c}$,  which is a consequence of Martin's Axiom.

We also discuss the following problem which seems much more complicated:
\begin{problem} Let $G$ be an infinite precompact abelian group. Describe the sets $QD(G)$ and $QW(G)$,
where
\begin{itemize}
\item $QD(G)=\{d(K): K~\mbox{is~an~infinite~proper~quotient~group~of}~G\}$; and
\item $QW(G)=\{w(K): K~\mbox{is~an~infinite~proper~quotient~group~of}~G\}.$
\end{itemize}
\end{problem}

Clearly, for infinite compact abelian groups $G$, one has $QW(G)$ is the interval $[\omega, w(G)]$.
It is an easy excise to check that this also holds for infinite {\em totally minimal abelian groups}, i.e., topological abelian groups $G$ such that every surjective continuous homomorphism with domain $G$ is open.
Note that according to a theorem of Prodanov \cite{Prod}, totally minimal abelian groups are precompact.
As a matter of fact, a stronger version of the result discovered by Prodanov and Stoyanov states that every {\em minimal} abelian group (see Section \ref{Sec4}) is precompact.

We provide several results about the sets $QD(G)$ and $QW(G)$.
In particular, it is shown that each subset of the interval $[\omega, \mathfrak{c}]$ coincides with $QW(G)$ for some precompact abelian group $G$.
Similarly, every subset of $[\omega, \mathfrak{c}]$ containing $\omega$ equals $QW(G)$ for a pseudocompact abelian group $G$.

Our main tool is the duality of precompact abelian groups discovered by Comfort and Ross.
For a precompact abelian group $G$, $G^*$ denotes the group of continuous homomorphisms of $G$ into the 1-dimensional torus, endowed with the pointwise convergence topology.
Although the topological isomorphism $G\cong (G^{*})^{*}$ is known by many authors (see, for example, \cite{HM}), the functor property of $^*$ has been less discussed.
We shall give some basic facts about the functor with proofs in the following.

\subsection{Notation and Terminology}

All topological spaces are assumed to be Hausdorff, unless otherwise stated.
In this paper we only discuss abelian groups, but we are not going to omit this word ``abelian''.
Abelian groups are written additively, including the torus $\T$, so the neutral element of an abelian group is denoted by $0$.
Let $X$ be a subset of a group $G$, the subgroup of $G$ generated by $X$ is denoted by $\hull{X}$.
If $X=\{g\}$, we shall write simply $\hull{g}$ instead of $\hull{\{g\}}$.
A subset $X$ of an abelian group $G$ is called {\em independent} if for every finite subset $\{x_1, x_2, ..., x_k\}$ of $X$ and integers $n_1, n_2,...,n_k$, $n_1x _1+n_2 x_2+...+n_k x_k=0$ implies that $n_1 x_1=n_2 x_2=...=n_k x_k=0$.
One immediately obtain that $\hull{X}$ is a free abelian group if $X$ is an independent set of non-torsion elements.
The {\em rank} of $G$, denoted by $r(G)$, is the cardinality of a maximal independent consisting only elements of infinite and prime power order.
This cardinal does not depend on the choice of the independent subsets; so it is a well-defined cardinal invariant.
If $Y$ is a subset of $X$, the morphism $Y\hookrightarrow X$ always stands for the inclusion.

We denote by $\Z$ the additive group of integers, by $\Q$ the rationals and by $\R$ the reals.
If $G$ is a topological group, then $G_d$ is the group $G$ with discrete topology.
The theory of uniform spaces of A. Weil, developed in \cite{Weil}, made it possible to treat topological groups from these positions, since every topological group has several natural compatible uniform structures.
The completion with respect to the left uniformity (resp. two-sided) of a topological group is called the {\em Weil completion} (resp. {\em Ra\u{\i}kov completion}).
A topological group coincides with its Weil completion (resp.  Ra\u{\i}kov completion) is called {\em Weil complete} (resp.  {\em Ra\u{\i}kov complete}).
The Weil completion of a topological group is not in general a group under the naturally extended operation.
However this does not happen to the Ra\u{\i}kov completion.
If the Weil completion of a topological group is again a topological group, then the two completions automatically coincide.
Precompact groups and topological abelian groups are in this case, since the left uniformity and the two-sided uniformity coincide on a precompact or abelian topological group.
Throughout the paper, we mainly discuss precompact (abelian) groups. So we call the Weil completion (which is also the Ra\u{\i}kov completion) of a precompact (abelian) group only the {\em completion} for convenience. Evidently, the completion of a precompact (abelian) group is a compact (abelian) group.
In the following, we shall denote the completion of $G$ by $\varrho G$.
For $G$ a topological abelian group, a \emph{(continuous) character} means a (continuous) homomorphism of $G$ into the torus $\T$.
For a topological abelian group $G$ which is either compact or discrete, we denote by $\widehat{G}$ the Pontryagin dual group of $G$, i.e., the group $cHom(G, \T)$ of continuous characters of $G$ endowed with the compact-open topology.
The famous and beautiful Pontryagin Duality states that $G$ and $\widehat{\widehat{G}}$ are isomorphic as topological groups.
So, throughout this paper, we shall identify $G$ with $\widehat{\widehat{G}}$; thus every element $g$ in $G$ is a continuous character of $\widehat{G}$ in the sense $g: \chi\to \chi(g)$, for all $\chi\in \widehat{G}$.
Basic properties of Pontryagin Duality can be found in any book on Topological Groups, so we omit them.

Let $G$ be an abelian group which is compact or discrete and $H$ be a subgroup of $\widehat{G}$, we denote by $H^\perp$ the subgroup of $G$ consisting those elements $g$ such that $\chi(g)=0$ for all $\chi\in H$.
One can easily check that $(H^\perp)^\perp=\overline{H}$.
The following result which is easy to check particularly implies that for every continuous character $\chi:G\to \T$, $\hull{\chi}$ is dense in $\widehat{G}$ if and only if $\chi$ is injective.

\begin{lemma}\label{Le:1}Let $G$ be an abelian group which is discrete or compact and $H$ a closed subgroup of $G$. Let also $\chi\in \widehat{G}$ with kernel $N$.
Then
\begin{itemize}
\item [(i)]the sequence
$$\mathbf{0}\to H^\perp\to \widehat{G}\to \widehat{H}\to \mathbf{0}$$
is exact, where $H^\perp\to \widehat{G}$ is the inclusion and $\widehat{G}\to\widehat{H}$ sends $f'$ to the restriction $f'\res_{H}$;
\item[(ii)] the mapping $\widehat{G/H}\to H^\perp: f\to f\circ p$ is a topological isomorphism, where $p:G\to G/H$ is the canonical projection;
\item [(iii)] the closed subgroup $\overline{\hull{\chi}}$ is topological isomorphic to $\widehat{G/N}$ and $\widehat{G}/\overline{\hull{\chi}}\cong \widehat{N}$.
\end{itemize}
\end{lemma}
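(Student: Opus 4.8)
The plan is to derive everything from the Pontryagin duality identifications already in force, together with the basic exact sequence of a subgroup inclusion.

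First I would establish (i). Let $j:H\hookrightarrow G$ be the inclusion and consider the dual map $\widehat{j}:\widehat{G}\to\widehat{H}$, $f'\mapsto f'\res_H$. Its kernel is exactly $\{f'\in\widehat{G}: f'\res_H=0\}=H^\perp$, so the sequence is exact at $\widehat{G}$ and at $H^\perp$. The only real content is exactness at $\widehat{H}$, i.e.\ surjectivity of $\widehat{j}$: every continuous character of $H$ extends to one of $G$. When $G$ is discrete this is just divisibility of $\T$ (injectivity of $\T$ as an abelian group). When $G$ is compact, $H$ is assumed closed, so by Pontryagin duality $G/H$ is again compact and $\widehat{j}$ is dual to the quotient-dual inclusion; concretely, the statement is the standard fact that $\widehat{H}$ is topologically isomorphic to $\widehat{G}/H^\perp$ — I would cite this as a basic property of Pontryagin duality. (In both cases one should remark that $\widehat{j}$ is continuous and, being a quotient-type map between dual groups, open onto its image, so the algebraic exactness is automatically topological; this is where one uses that in the compact case $H^\perp$ is closed in $\widehat G$ and $\widehat H$ carries the quotient topology.)

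For (ii): given the canonical projection $p:G\to G/H$, the map $f\mapsto f\circ p$ is a homomorphism $\widehat{G/H}\to\widehat G$, and its image lands in $H^\perp$ since $f\circ p$ kills $H$. Injectivity is immediate because $p$ is surjective. For surjectivity onto $H^\perp$: if $g'\in H^\perp$ then $g'$ kills $H$, so it factors through $G/H$ as a (continuous, by the universal property of the quotient topology) character $f$ with $f\circ p=g'$. That the bijection is a homeomorphism follows because $p$ is a continuous open surjection, so precomposition with $p$ is a homeomorphism between the corresponding spaces of characters with the compact-open (equivalently pointwise, on compact or discrete domains) topologies; alternatively one reads it off from the open mapping statement used in (i).

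Finally (iii) should drop out of (i) and (ii) applied to $H=N=\ker\chi$. First, $\overline{\hull{\chi}}=(N)^{\perp}$: indeed $\chi\in N^\perp$, and $N^\perp$ is closed, so $\overline{\hull\chi}\subseteq N^\perp$; conversely, since $\chi$ induces an injection $G/N\hookrightarrow\T$ whose image is either all of $\T$ (so $\hull\chi$ is dense in $N^\perp\cong\widehat{G/N}$, using that $\widehat{\T}\cong\Z$ is generated by $\mathrm{id}_\T$) or finite cyclic (so $\hull\chi$ is already closed and equals $N^\perp$), one gets $\overline{\hull\chi}=N^\perp$ — this is exactly the remark preceding the lemma about $\hull\chi$ being dense iff $\chi$ is injective. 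Then (ii) with $H=N$ gives $N^\perp\cong\widehat{G/N}$, i.e.\ $\overline{\hull\chi}\cong\widehat{G/N}$; and (i) with $H=N$ gives $\widehat{G}/N^\perp\cong\widehat N$, i.e.\ $\widehat G/\overline{\hull\chi}\cong\widehat N$. The main obstacle, such as it is, is bookkeeping the topological (not merely algebraic) nature of these isomorphisms in the compact case, where one must invoke that closed-subgroup duals carry the quotient topology and that $H^\perp$ is closed; once that standard fact is granted, everything else is a routine chase through the universal properties of quotient maps and the self-duality built into the paper's standing identification $G=\widehat{\widehat G}$.
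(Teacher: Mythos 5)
Your overall route coincides with the paper's: parts (i) and (ii) are treated as standard Pontryagin-duality facts (the paper simply declares them well known, while you sketch the character-extension and factorization arguments), and (iii) is reduced to the identity $\overline{\hull{\chi}}=N^\perp$ followed by an application of (i) and (ii) with $H=N$.

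The one point where you diverge --- and where your argument has a genuine, though easily repaired, gap --- is the inclusion $N^\perp\subseteq\overline{\hull{\chi}}$. You assert that the injection $G/N\hookrightarrow\T$ induced by $\chi$ has image either all of $\T$ or finite cyclic. That dichotomy is valid only when $G$ is compact, so that the image is a closed subgroup of $\T$; for discrete $G$ the image may be a proper dense subgroup (take $G=\Z$ and $\chi(n)=n\alpha$ with $\alpha$ irrational), and then neither branch of your case analysis applies. Falling back on ``the remark preceding the lemma'' does not close the gap, because the paper presents that remark as a consequence of this lemma, not as an independent input. The paper's own proof instead obtains $\overline{\hull{\chi}}=(\hull{\chi}^\perp)^\perp=N^\perp$ in one line from the biduality fact $(A^\perp)^\perp=\overline{A}$ recorded just before the lemma; alternatively, in the discrete case one can observe that $\hull{\bar{\chi}}$ separates the points of $G/N$ (since $\bar{\chi}$ is injective) and hence is dense in $\widehat{G/N}\cong N^\perp$. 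With either repair, the remainder of your argument goes through and matches the paper's.
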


\begin{proof}The first and second assertions are well-known. To see the third, we let $H=N$ in the sequence in (i).
To see (iii), it suffices to note that $\overline{\hull{\chi}}=(\hull{\chi}^\perp)^\perp=N^\perp$ and apply (i) and (ii).
\end{proof}

For a set $S$, we are going to use $|S|$ to stand for the cardinality of $S$.
We denote by $\omega$ the first infinite ordinal (so it is also a cardinal), by $\mathfrak{c}$ the cardinal of the continuum, i.e., $\mathfrak{c}=2^\omega$.
For $\tau$ an infinite cardinal, $\ln \tau$ is the least cardinal $\mu$ with $2^\mu\geq \tau$.
The {\em successor} of a cardinal $\tau$ is denoted by $\tau^+$; it is the least cardinal greater than $\tau$.
If $\kappa\leq \tau$, then $2^{<\kappa}$ is the supremum of $\{2^\mu:\mu<\kappa\}$.
In particular, $2^{<\mathfrak{c}}=|\{A\subseteq \mathfrak{c}: |A|<\mathfrak{c}\}|$.
Clearly $2^{<\mathfrak{c}}\geq \mathfrak{c}$ and the equality $2^{<\mathfrak{c}}=\mathfrak{c}$ is equivalent to that $2^\kappa=\mathfrak{c}$ for any infinite $\kappa<\mathfrak{c}$.
For a topological space $X$, we denote by $d(X)$ the \emph{density} of $X$, which is the infimum of the cardinals of dense subsets of $X$.
We also denote by $w(X)$ the \emph{weight} of $X$, i.e., the least cardinal $\kappa$ that $X$ has a base of size $\kappa$.
It is clear that $d(X)\leq w(X)$.
For Tychonoff spaces $X$, one has $w(X)\leq 2^{d(X)}$ \cite[Theorem 1.5.7]{Eng}.
Since topological groups are Tychonoff, the above inequality holds for all topological groups.
If $X$ is an infinite precompact group, then it is easy to see that the local weight is equal to $w(X)$.
Thus metrizable precompact groups are exactly precompact groups with countable weight.
 And if $X$ is further assumed to be an infinite compact group, then we have $d(X)=\ln {w(X)}$ and $|X|=2^{w(X)}$ \cite[Corollary5.2.7]{AT}.

\subsection{The Comfort-Ross Duality for Precompact Abelian Groups}

For $G$ an abelian group and $S$ a set of characters of $G$.
The coarsest topology on $G$ making all $\chi\in S$ continuous is called the topology {\em generated by $S$}.
This topology is evidently a group topology, which is Hausdorff if and only if $S$ separates the points of $G$.
Let $\mathcal{T}(G)$ be the set of all  precompact topology on $G$ with the order that $\tau_1\leq \tau_2$ only if $\tau_1$ is coarser than $\tau_2$, and $\mathcal{S}(\widehat{G})$ the poset  of dense subgroups of $\widehat{G}$ ordered by inclusion.
\begin{theorem}\label{Th:CRdual}\cite{CR1}[Comfort-Ross Duality] Let $G$ be an (abstract) abelian group, for every precompact group topology $\tau$, we denote by $(G, \tau)^*$ the group of continuous characters of $(G, \tau)$. Then we have:
\begin{itemize}
 \item[(a)] $(G, \tau)^*$ is a dense subgroup of $\widehat{G_d}$;
 \item[(b)] a subgroup of $\widehat{G_d}$ generates a Hausdorff topology on $G$ if and only if it is dense;
 \item[(c)] the mapping $\tau\to (G, \tau)^*$ is an isomorphism of the posets $\mathcal{T}(G)$ and $\mathcal{S}(G)$, with the inversion maps every dense subgroup $S$ of $\widehat{G_d}$ to the topology on $G$ generated by $S$.
\end{itemize}
\end{theorem}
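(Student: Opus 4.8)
The plan is to funnel everything through the completion functor and the compact group $\widehat{G_d}$ (in (c) the symbol $\widehat{G}$ must be read as $\widehat{G_d}$, since $G$ carries no topology a priori). The one technical input is this: if $(G,\tau)$ is a precompact group with completion $K=\varrho(G,\tau)$, a compact abelian group in which $G$ sits densely, then every $\tau$-continuous character of $G$, being uniformly continuous into the complete group $\T$, extends uniquely to a continuous character of $K$; hence restriction to $G$ gives an isomorphism $\widehat{K}\cong(G,\tau)^{*}$. Two consequences: since $K$ is compact abelian, $\widehat{K}$ separates the points of $K$ (Pontryagin duality), hence of $G$, so $(G,\tau)^{*}$ separates the points of $G$; by $(H^{\perp})^{\perp}=\overline{H}$ this says $\overline{(G,\tau)^{*}}=\{0\}^{\perp}=\widehat{G_d}$, which is (a). And since the evaluation map $K\hookrightarrow\T^{\widehat K}$ is a topological embedding, the subspace topology $\tau$ on $G$ coincides with the topology generated by $\widehat K=(G,\tau)^{*}$; equivalently, applying $\tau\mapsto(G,\tau)^{*}$ and then taking the generated topology returns $\tau$.

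Given a subgroup $S\leq\widehat{G_d}$, the topology $\tau_S$ generated by $S$ is by definition the initial topology of the characters $\chi\colon G\to\T$, $\chi\in S$, so $g\mapsto(\chi(g))_{\chi\in S}$ is an initial map of $(G,\tau_S)$ into the compact group $\T^{S}$; thus $(G,\tau_S)$ is precompact, and it is Hausdorff exactly when this map is injective, i.e. when $S$ separates points, i.e. $S^{\perp}=\{0\}$, i.e. $\overline S=(S^{\perp})^{\perp}=\widehat{G_d}$. This is (b). For (c) it remains to see $(G,\tau_S)^{*}=S$ when $S$ is dense. Write $K_S$ for the closure of $G$ in $\T^{S}$, so $K_S=\varrho(G,\tau_S)$ and, by the extension remark, $(G,\tau_S)^{*}\cong\widehat{K_S}$ via restriction to $G$. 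Now $\widehat{\T^{S}}=\bigoplus_{\chi\in S}\Z$, with the $\chi$-th summand generated by the projection $\pi_\chi$, and restriction along the closed embedding $K_S\hookrightarrow\T^{S}$ surjects onto $\widehat{K_S}$ by the exact sequence of Lemma~\ref{Le:1}(i). Hence every element of $(G,\tau_S)^{*}$ is a function $g\mapsto\sum_i n_i\chi_i(g)$ for some finite family $\chi_i\in S$, $n_i\in\Z$; since $S$ is a subgroup of $\widehat{G_d}$, such a function lies in $S$. The reverse inclusion $S\subseteq(G,\tau_S)^{*}$ is immediate, so $(G,\tau_S)^{*}=S$.

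Both assignments are order preserving: $\tau_1\leq\tau_2$ makes every $\tau_1$-continuous character $\tau_2$-continuous, so $(G,\tau_1)^{*}\subseteq(G,\tau_2)^{*}$; and $S_1\subseteq S_2$ gives a coarser initial topology, so $\tau_{S_1}\leq\tau_{S_2}$. Combined with the two computations that the composites are the identities, this yields the poset isomorphism of (c) with the stated inverse. I expect the crux to be the identity $(G,\tau_S)^{*}=S$: it is the only place ruling out ``unexpected'' continuous characters, and it rests on the structural facts $\widehat{\T^{S}}=\bigoplus_{S}\Z$ and the surjectivity of $\widehat{\T^{S}}\to\widehat{K_S}$ (i.e. exactness in Lemma~\ref{Le:1}(i)); the separation/annihilator bookkeeping in (a), (b) and the monotonicity are routine by comparison.
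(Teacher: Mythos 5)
The paper states this theorem as a cited result of Comfort and Ross (\cite{CR1}) and supplies no proof of its own, so there is no internal argument to compare against. Your proof is correct and is essentially the standard one: the extension of characters to the completion, Pontryagin duality for the compact group $\widehat{G_d}$ together with $(H^\perp)^\perp=\overline{H}$ for (a) and (b), and the computation $(G,\tau_S)^*=S$ via $\widehat{\T^S}=\bigoplus_S\Z$ and the surjectivity of restriction along the closed embedding $K_S\hookrightarrow\T^S$ for (c). All the ingredients you invoke (uniform continuity of characters, duals of products, exactness as in Lemma~\ref{Le:1}(i)) are available, and you correctly identify $(G,\tau_S)^*=S$ as the one step that genuinely rules out unexpected characters.
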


In the above theorem and the following, $G_d$ denotes the discrete group having the same underlying group with $G$.
Then the Comfort-Ross duality yields that $G^*$ is a dense subgroup of $\widehat{G_d}$.
The topology of $G^*$ inherited from $\widehat{G_d}$ is therefore the topology of pointwise convergence.
With this topology, $G^*$ is again a precompact group, and is called the {\em Comfort-Ross dual group} (or briefly {\em C-R dual group}) of $G$.

One of the most important consequences of Comfort-Ross duality is the following:

\begin{theorem}Let $G$ be a precompact abelian group. Then the {\em evaluation} $\Psi: G\to (G^*)^*, g\to \Psi_g$ is a topological isomorphism, where $\Psi_g$ is the morphism $G^*\to \T$ sending $\chi$ to $\chi(g)$ for all $\chi\in G^*$.\end{theorem}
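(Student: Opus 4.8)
The plan is to reduce the statement to Pontryagin duality for the compact group $\widehat{G_d}$ together with the Comfort--Ross duality of Theorem~\ref{Th:CRdual}. Write $G=(G_d,\tau)$ for the appropriate precompact group topology $\tau$ on the underlying abstract group, and put $S=G^{*}$ and $A=\widehat{G_d}$ (a compact abelian group, since $G_d$ is discrete). By Theorem~\ref{Th:CRdual}(a), $S$ is a dense subgroup of $A$, and by Theorem~\ref{Th:CRdual}(c) the topology $\tau$ is precisely the topology on $G$ generated by $S$, i.e.\ the initial topology making every $\chi\in S$ continuous. In particular the closure of $S$ in $A$ is all of $A$, so by uniqueness of completions the completion $\varrho(G^{*})$ of the precompact group $G^{*}=S$ is the compact group $A=\widehat{G_d}$.

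The first main step is to identify $(G^{*})^{*}$ as an abstract group. Every continuous character of $S=G^{*}$ is uniformly continuous and $\T$ is compact, hence complete, so each such character extends uniquely to a continuous character of the completion $\varrho(G^{*})=\widehat{G_d}$; conversely a continuous character of $\widehat{G_d}$ restricts to one of its dense subgroup $S$, and density forces this restriction map to be injective. Thus restriction is a group isomorphism $\widehat{\widehat{G_d}}\to (G^{*})^{*}$. By Pontryagin duality $\widehat{\widehat{G_d}}$ is, as an abstract group, $G_d$ via $g\mapsto(\chi\mapsto\chi(g))$; composing this with the restriction isomorphism yields exactly the evaluation $g\mapsto\Psi_g$. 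Hence $\Psi\colon G\to (G^{*})^{*}$ is a group isomorphism. (One should also record that each $\Psi_g$ genuinely lies in $(G^{*})^{*}$: as a map on $G^{*}$ it is the restriction of the coordinate projection $\T^{G_d}\to\T$ at $g$, so it is continuous for the topology of pointwise convergence.)

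It then remains to see that $\Psi$ is a homeomorphism. The group $(G^{*})^{*}$ carries, by the very definition of the Comfort--Ross dual, the topology of pointwise convergence on $G^{*}=S$; a subbasic open set there has the form $\{F\in (G^{*})^{*}: F(\chi)\in U\}$ with $\chi\in S$ and $U\subseteq\T$ open, and its preimage under $\Psi$ is $\chi^{-1}(U)\subseteq G$. On the other hand, by Theorem~\ref{Th:CRdual}(c) the topology $\tau$ of $G$ is generated exactly by the sets $\chi^{-1}(U)$ with $\chi\in S$ and $U\subseteq\T$ open. Since $\Psi$ is a bijection carrying this generating family onto the generating family of $(G^{*})^{*}$, it is both continuous and open, hence a topological isomorphism.

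The step I expect to demand the most care is the abstract identification $(G^{*})^{*}\cong G_d$: one must check carefully that continuous characters of the (generally neither discrete nor compact) precompact group $G^{*}$ really do extend to its compact completion $\widehat{G_d}$, that the extensions are unique, and that the resulting isomorphism, when composed with Pontryagin's $G_d\cong\widehat{\widehat{G_d}}$, is the concrete evaluation map $\Psi$ and not merely some abstract isomorphism. Everything else — describing $\tau$ and the topology of $(G^{*})^{*}$ as topologies of pointwise convergence and matching their generating families — is routine bookkeeping once Theorem~\ref{Th:CRdual} is available.
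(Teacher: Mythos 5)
Your proof is correct. Note that the paper offers no proof of this theorem at all: it is stated as an immediate consequence of Theorem~\ref{Th:CRdual}, so there is nothing to compare line by line, and what you have written is precisely the standard argument the paper leaves implicit. Both of your key ingredients are consistent with the paper's own toolkit: the unique extension of continuous characters from a dense subgroup to the compact completion is the same ``widely known fact'' the author invokes in the proof of Proposition~\ref{Prop:exact}, and the identification of $\varrho(G^{*})$ with $\widehat{G_d}$ is exactly what underlies Proposition~\ref{Prop:wei}. You also correctly isolate the one genuinely nontrivial step --- surjectivity of the restriction map $\widehat{\widehat{G_d}}\to (G^{*})^{*}$, i.e.\ that characters of $G^{*}$ extend to $\widehat{G_d}$ --- after which the topology-matching is indeed routine, since both $\tau$ and the Comfort--Ross dual topology are initial topologies for the corresponding evaluation families by Theorem~\ref{Th:CRdual}(c).
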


The above theorem allows us to identify $G$ with the bi-dual group $(G^*)^*$.

\begin{proposition}\label{Prop:wei}For every precompact abelian group $G$ we have $w(G)=|G^*|$.\end{proposition}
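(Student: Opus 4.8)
The plan is to show both inequalities $w(G)\le |G^*|$ and $|G^*|\le w(G)$.

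For $w(G)\le |G^*|$, recall that by the Comfort-Ross duality (Theorem \ref{Th:CRdual}), the topology of $G$ is precisely the topology generated by the set of characters $G^*$, i.e., the coarsest topology making every $\chi\in G^*$ continuous. A standard subbase for such a topology consists of the sets $\chi^{-1}(V)$, where $\chi$ ranges over $G^*$ and $V$ ranges over a countable base of open sets of $\T$. Hence $G$ has a subbase (and therefore a base, by taking finite intersections) of cardinality at most $|G^*|\cdot\omega$. Since $G$ is infinite precompact, $|G^*|=|G^*|$ is infinite (the dual of an infinite precompact group is infinite), so $|G^*|\cdot\omega=|G^*|$ and we get $w(G)\le |G^*|$.

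For the reverse inequality $|G^*|\le w(G)$, I would argue that $G^*$ embeds, as an abstract group, into $\widehat{\varrho G}$, the Pontryagin dual of the (compact) completion $\varrho G$ of $G$; indeed every continuous character of $G$ extends uniquely to a continuous character of $\varrho G$ by density and the compactness/completeness, so $G^*\cong \widehat{\varrho G}$ as abstract groups (this is part of what the Comfort-Ross picture gives, with $G^*$ dense in $\widehat{(\varrho G)}$ but equal as a set since every character of $G$ extends). Now $\varrho G$ is a compact group with $w(\varrho G)=w(G)$, because $G$ is dense in $\varrho G$ and weight is preserved under taking dense subgroups of topological groups (equivalently, by the remark in the excerpt that the local weight equals the weight for infinite precompact groups and $G$, $\varrho G$ share a base of neighbourhoods of $0$). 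For a compact abelian group $K$ one has $|\widehat{K}|=w(K)$: the dual $\widehat K$ is discrete, and a discrete abelian group and its compact dual satisfy $w(K)=|\widehat K|$ by Pontryagin duality together with the fact that $w$ of a compact group equals the cardinality of a dense subgroup of its dual... more directly, $\widehat K$ separates points and generates the topology of $K$, so $w(K)\le |\widehat K|$, while $|\widehat K| = |\widehat K| \le 2^{w(K)}$ is too weak; instead use $w(K)=|\widehat K|$ which is the classical statement $w(\widehat A)=|A|$ for discrete $A$. Applying this with $K=\varrho G$ gives $|G^*|=|\widehat{\varrho G}|=w(\varrho G)=w(G)$, which actually proves the equality outright.

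The main obstacle is getting the clean identification $|G^*|=|\widehat{\varrho G}|$ and $w(\varrho G)=w(G)$ rigorously rather than hand-wavingly: one must be careful that $G^*$ as a set of characters of $G$ coincides (via restriction/extension) with $\widehat{\varrho G}$, using that $\T$ is compact so every continuous character of the dense subgroup $G$ extends continuously to $\varrho G$, and that this extension map is injective (by density) and surjective (by restriction). Once this is in place, the classical identity $w(\varrho G)=|\widehat{\varrho G}|$ for compact abelian groups (found in any text on topological groups, and consistent with the formula $|X|=2^{w(X)}$ together with $|\widehat K|=|K/\!\!\sim|$...) finishes the argument. Alternatively, one can bypass the completion entirely and prove $w(G)=|G^*|$ in one stroke: $G^*$ generates the topology of $G$, so $w(G)\le|G^*|\cdot\omega=|G^*|$; and conversely distinct characters in $G^*$ are distinguished by the topology in the sense that $|G^*|\le w(G)$ follows because $G^*$ is a subgroup of the dual of the compact group $\varrho G$ of weight $w(G)$, which has cardinality exactly $w(G)$. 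I would present the proof in this streamlined two-inequality form, citing \cite[Corollary 5.2.7]{AT} (or the analogous statement) for the cardinality of the dual of a compact group.
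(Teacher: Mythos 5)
Your core chain $|G^*|=|\widehat{\varrho G}|=w(\varrho G)=w(G)$ is exactly the paper's proof, which reads in full: ``Let $H=\varrho G$, then we obtain $w(G)=w(H)=|\widehat{H}|=|G^*|$.'' The preliminary two-inequality discussion is redundant (as you note, the completion argument gives the equality outright), but the argument is correct and essentially identical in approach.
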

\begin{proof}Let $H=\varrho G$, then we obtain $w(G)=w(H)=|\widehat{H}|=|G^*|$.\end{proof}

\begin{proposition}\label{Prop:May}Let $G$ be a precompact abelian group. The following assertions hold:
\begin{itemize}
  \item[(i)] a subgroup $H$ of $G$ is dense if and only if $H$ separates the points of $G^*$;
  \item[(ii)] a subgroup $H$ of $G$ is dense if and only if the topology on $G^*$ generated by $H$ is Hausdorff;
  \item[(iii)]  $G$ has density less than or equal to $\kappa$ if and only if $G^*$ accepts a coarser Hausdorff group topology of weight less than or equal to $\kappa$, where $\kappa$ is an infinite cardinal.
  \end{itemize}
\end{proposition}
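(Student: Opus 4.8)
\emph{Proof plan.} I would derive all three items from the Comfort--Ross duality (Theorem~\ref{Th:CRdual}) applied to the precompact group $G^*$, using the biduality identification $G=(G^*)^*$ throughout. The one lemma I would isolate first is a \emph{density transfer}: for a subgroup $H$ of $G$, $H$ is dense in $G$ if and only if $H$ is dense in $\widehat{(G^*)_d}$. Indeed, $G^*$ is precompact, so Theorem~\ref{Th:CRdual}(a) applied to $G^*$ says that $(G^*)^*=G$ is a dense subgroup of $\widehat{(G^*)_d}$; since $G$, $\widehat{(G^*)_d}$ and $\T^{G^*}$ all carry the pointwise-convergence topology, $G$ inherits its topology from $\widehat{(G^*)_d}$, and therefore the closure of $H$ in $G$ is the intersection with $G$ of the closure of $H$ in $\widehat{(G^*)_d}$; both implications follow at once.

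For (i)$\Leftrightarrow$(ii): by the remark preceding Theorem~\ref{Th:CRdual}, a group topology generated by a family of characters is Hausdorff precisely when that family separates points, so the topology on $G^*$ generated by $H$ is Hausdorff if and only if $H$ separates the points of $G^*$. It therefore suffices to prove (ii). For this I would apply Theorem~\ref{Th:CRdual}(b) with $G^*$ in the role of $G$: a subgroup of $\widehat{(G^*)_d}$ generates a Hausdorff topology on $G^*$ if and only if it is dense in $\widehat{(G^*)_d}$. Since $H$ is a subgroup of $\widehat{(G^*)_d}$ (being contained in $G\subseteq\widehat{(G^*)_d}$), combining this with the density transfer gives ``$H$ generates a Hausdorff topology on $G^*$ $\iff$ $H$ is dense in $\widehat{(G^*)_d}$ $\iff$ $H$ is dense in $G$'', which is (ii).

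For (iii): if $d(G)\le\kappa$, pick a dense subset of $G$ of size $\le\kappa$ and let $H$ be the subgroup it generates, so $H$ is a dense subgroup of $G$ with $|H|\le\kappa$ (here the assumption $\kappa\ge\omega$ is used). Let $\sigma$ be the topology on $G^*$ generated by $H$. By (i), $H$ separates the points of $G^*$, so the evaluation map $(G^*,\sigma)\to\T^{H}$, $\chi\mapsto(\chi(h))_{h\in H}$, is injective; as $\sigma$ is by construction the initial topology for this map, it is a topological embedding, hence $(G^*,\sigma)$ is Hausdorff, precompact, and satisfies $w(G^*,\sigma)\le w(\T^{H})\le\kappa$, while $\sigma$ is coarser than the original topology of $G^*$ because $H\subseteq G=(G^*)^*$ consists of original-continuous characters of $G^*$. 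Conversely, suppose $\sigma$ is a Hausdorff group topology on $G^*$ coarser than the original one, with $w(G^*,\sigma)\le\kappa$. A group topology coarser than a precompact one is precompact, so $\sigma$ is precompact, and I set $S:=(G^*,\sigma)^*$. By Theorem~\ref{Th:CRdual}(a), $S$ is dense in $\widehat{(G^*)_d}$, and since every $\sigma$-continuous character is original-continuous, $S\subseteq(G^*)^*=G$; so by the density transfer $S$ is a dense subgroup of $G$. Finally Proposition~\ref{Prop:wei} applied to the precompact abelian group $(G^*,\sigma)$ gives $|S|=|(G^*,\sigma)^*|=w(G^*,\sigma)\le\kappa$, whence $d(G)\le|S|\le\kappa$.

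The genuinely routine points --- that a group topology coarser than a precompact one is precompact, that a topology generated by at most $\kappa$ characters embeds into $\T^\kappa$ and hence has weight $\le\kappa$, and that the subgroup generated by a set of size $\le\kappa$ again has size $\le\kappa$ --- I would merely state. The place that warrants care is the density transfer lemma and, more broadly, keeping careful track in (iii) of \emph{which} group Theorem~\ref{Th:CRdual} and Proposition~\ref{Prop:wei} are being applied to (always $G^*$ with one of its precompact topologies, never $G$ itself); beyond that I foresee no essential obstacle.
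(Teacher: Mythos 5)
Your proposal is correct and follows essentially the same route as the paper: both reduce everything to Theorem~\ref{Th:CRdual} applied to $G^*$, identify density of $H$ in $G$ with density in $\widehat{(G^*)_d}$, and prove (iii) by passing between dense subgroups of $G$ and coarser precompact topologies on $G^*$ via Proposition~\ref{Prop:wei}. The only cosmetic difference is that in the forward direction of (iii) you bound $w(G^*,\sigma)$ by embedding into $\T^{H}$, where the paper instead invokes Proposition~\ref{Prop:wei} directly; both are immediate.
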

\begin{proof}
Let us denote the underlying group of $G^*$ by $K$.
Then $G=(G^*)^*$ is a dense subgroup of $\widehat{K}$.
So $H$ is dense in $G$ if and only if $H$ is dense in $\widehat{K}$; this is equivalent to that $H$ generates a Hausdorff group topology on $K$, by Theorem \ref{Th:CRdual} (b).
Thus we proves (ii) and also (i).

To see (iii), let first $H$ be a dense subgroup of $G$ with cardinality $d(G)$.
The topology on $K$ generated by $H$ is coarser than the original topology of $G^*$, and is of weight $d(G)$ by Proposition \ref{Prop:wei}.
So if $d(G)\leq \kappa$ then $G^*$ admits a coarser Hausdorff group topology of weight $\leq \kappa$.
Conversely, if $G^*$ accepts such a topology $\sigma$, then the C-R dual group $(K, \sigma)^*$ is a dense subgroup of $\widehat{K}$ contained in $G$.
So it is also  a dense subgroup of $G$.
Applying Proposition \ref{Prop:wei} again, this subgroup has cardinality $\leq \kappa$.
\end{proof}

Fix a continuous homomorphism $f: A\to B$ between precompact abelian groups.
Let us first consider $f:A_d\to B_d$ as the homomorphism between discrete groups.
Then the Pontryagin dual mapping $\widehat{f}: \widehat{B_d}\to \widehat{A_d}, \varphi\to\varphi\circ f$ is a continuous homomorphism between compact groups.
Recall that $B^*$ (resp. $A^*$) is a dense subgroup of $\widehat{B_d}$ (resp. $\widehat{A}_d$).
It follows from the continuity of $f:A\to B$ that the restriction $f^*:=\widehat{f}\res_{B^*}$ sends $B^*$ into $A^*$.
So we get a continuous homomorphism $f^*$ of $B^*$ to $A^*$.
And it is very easy to see that $^*$ is a contravariant functor of the category $\mathbf{PCA}$ of precompact abelian groups (and continuous homomorphisms) to itself.
We omit the proof here.

Moreover, let us see that if we identify $G$ with $(G^*)^*$ by the evaluation for all precompact abelian groups $G$, then $(f^*)^*=f$.
Take $\Psi_a\in (A^*)^*$, then $(f^*)^*(\Psi_a)=\Psi_a\circ f^*$.
So for any $\chi\in B^*$, one has
$$(f^*)^*(\Psi_a)(\chi)=\psi_a(f^*(\chi))=\psi_a(\chi\circ f)=\chi(f(a))=\Psi_{f(a)}(\chi).$$
Thus we have that $(f^*)^*: (A^*)^*\to (B^*)^*$ sends $\Psi_a$ to $\Psi_{f(a)}$ for any $a\in A$.
In other words, $(f^*)^*=f$.
So we have:

\begin{theorem}The  contravariant functor $^*$ is a dual isomorphism of the category $\mathbf{PCA}$  to itself.\end{theorem}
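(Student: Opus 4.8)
The plan is to read this statement off from what has just been assembled; the argument should be only a couple of lines. The three facts already in hand are: (i)~$^*$ is a contravariant functor from $\mathbf{PCA}$ to $\mathbf{PCA}$, sending $G$ to $G^*$ and $f\colon A\to B$ to $f^*=\widehat f\res_{B^*}\colon B^*\to A^*$, preserving identities and composition; (ii)~for each $G\in\mathbf{PCA}$ the evaluation $\Psi_G\colon G\to (G^*)^*$ is a topological isomorphism; (iii)~under the identification of $G$ with $(G^*)^*$ via $\Psi_G$ one has $(f^*)^*=f$ for every morphism $f$, this being the displayed computation $(f^*)^*(\Psi_a)=\Psi_{f(a)}$.

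First I would adopt the identification $G=(G^*)^*$ used throughout the paper. Then (iii) together with (i) says precisely that the composite functor $^*\circ{}^*$ equals the identity functor on $\mathbf{PCA}$: it is the identity on objects by the identification, and the identity on morphisms because $(f^*)^*=f$. Hence $^*$ is a contravariant endofunctor of $\mathbf{PCA}$ that is its own two-sided inverse. A contravariant functor with a two-sided inverse is, by definition, a dual isomorphism of the category onto itself; in particular $f\mapsto f^*$ is a bijection $\mathrm{Hom}_{\mathbf{PCA}}(A,B)\to\mathrm{Hom}_{\mathbf{PCA}}(B^*,A^*)$ for all $A,B$, with inverse obtained by applying $^*$ once more (through the identifications $\Psi_A,\Psi_B$), and $^*$ is bijective on objects. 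This gives the claim.

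I do not foresee any real obstacle: all the content --- the Comfort--Ross duality, the functoriality of $^*$, the bidual theorem, and the identity $(f^*)^*=f$ --- has been proved above, so the present statement is genuinely a bookkeeping wrap-up. The only nuance worth a remark is that, strictly speaking, before the identification $G=(G^*)^*$ one has $^*\circ{}^*$ only naturally isomorphic to $\mathrm{id}_{\mathbf{PCA}}$ (via the natural isomorphism $\Psi$), which makes $^*$ a dual equivalence; committing to the identification --- equivalently, invoking the general fact that a functor possessing a quasi-inverse is an equivalence, $^*$ being its own quasi-inverse --- upgrades this to the honest dual isomorphism asserted in the theorem.
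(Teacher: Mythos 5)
Your proposal is correct and follows the paper's own route exactly: the paper derives this theorem from the preceding discussion (functoriality of $^*$, the evaluation isomorphism $G\cong (G^*)^*$, and the computation $(f^*)^*(\Psi_a)=\Psi_{f(a)}$ showing $(f^*)^*=f$ under the identification), presenting the theorem as the bookkeeping conclusion ``So we have.'' Your closing remark distinguishing a dual equivalence from a dual isomorphism is a fair point of precision, consistent with the paper's convention of identifying $G$ with $(G^*)^*$.
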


It is well-known and easy to see that in the category $\mathbf{PCA}$, monomorphisms are exactly injective continuous homomorphisms and epimorphisms are exactly continuous homomorphisms with dense image.
So we have the following immediately.
\begin{corollary}\label{Coro:epmo}A continuous homomorphism $f$ between precompact abelian groups is injective if and only if $f^*$ has dense image.\end{corollary}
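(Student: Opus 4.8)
The plan is to deduce the corollary formally from two facts already established above: that $^*$ is a dual isomorphism of $\mathbf{PCA}$ onto itself satisfying $(f^*)^* = f$, and that in $\mathbf{PCA}$ the monomorphisms are exactly the injective continuous homomorphisms while the epimorphisms are exactly the continuous homomorphisms with dense image. A contravariant equivalence of categories interchanges monomorphisms and epimorphisms, so $f$ is injective $\iff$ $f$ is a monomorphism $\iff$ $f^*$ is an epimorphism $\iff$ $f^*$ has dense image. Because the functor here is involutive on the nose, I would give this as a one-line direct verification rather than quoting the general principle: if $f$ is a monomorphism and $g, h \colon A^* \to D$ are continuous homomorphisms with $g \circ f^* = h \circ f^*$, then applying $^*$ and using contravariance together with $(f^*)^* = f$ gives $f \circ g^* = f \circ h^*$, whence $g^* = h^*$ since $f$ is a monomorphism, and applying $^*$ once more yields $g = h$; thus $f^*$ is an epimorphism. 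Running the same argument with $f^*$ in place of $f$ and using $(f^*)^* = f$ gives the converse implication.

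I would also record the direct argument in terms of characters, which keeps the statement self-contained and shows where precompactness is used. Suppose first that $f^*$ has dense image in $A^*$, and let $a \in A$ with $f(a) = 0$. For every $\chi \in B^*$ we have $f^*(\chi)(a) = (\chi \circ f)(a) = \chi(f(a)) = 0$, so the continuous character $\Psi_a \colon A^* \to \T$ vanishes on the dense subgroup $f^*(B^*)$ of $A^*$; being continuous (and $\{0\}$ closed in $\T$), it therefore vanishes on all of $A^*$, and since the evaluation $\Psi \colon A \to (A^*)^*$ is injective this forces $a = 0$. Hence $f$ is injective. Conversely, suppose $f$ is injective and fix $a \in A \setminus \{0\}$. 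Then $f(a) \neq 0$, and since the evaluation $B \to (B^*)^*$ is injective the group $B^*$ separates the points of $B$, so there is $\chi \in B^*$ with $\chi(f(a)) \neq 0$, i.e.\ $f^*(\chi)(a) \neq 0$. Thus $f^*(B^*)$ is a subgroup of $A^*$ separating the points of $A = (A^*)^*$, and by Proposition \ref{Prop:May}(i), applied with $A^*$ in the role of $G$, this means exactly that $f^*(B^*)$ is dense in $A^*$.

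The only real obstacle is bookkeeping: keeping track of which precompact group is the domain and which the codomain after each application of $^*$, and invoking the correct half of the monomorphism/epimorphism description in $\mathbf{PCA}$. No genuine difficulty appears, since the substantive input --- that $^*$ is a dual equivalence with $(f^*)^* = f$, the identification of monomorphisms and epimorphisms in $\mathbf{PCA}$, and Proposition \ref{Prop:May}(i) --- is already in hand.
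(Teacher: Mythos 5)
Your categorical argument is exactly the paper's proof: the paper deduces the corollary ``immediately'' from the facts that $^*$ is a dual isomorphism of $\mathbf{PCA}$ onto itself and that in $\mathbf{PCA}$ monomorphisms are precisely the injective continuous homomorphisms while epimorphisms are precisely those with dense image. Your supplementary direct verification via characters and Proposition \ref{Prop:May}(i) is also correct and usefully makes the separation-of-points mechanism explicit, but it goes beyond what the paper records.
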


Recall that a {\em topological group embedding} $\varphi: G\to H$ is a continuous homomorphism such that the restriction to its image $G\to \varphi(G)$ is a topological isomorphism.
In other words, a topological group embedding is a composition of topological isomorphism with an inclusion.
Continuous onto homomorphisms in $\mathbf{PCA}$ are epic, so the above corollary shows that their dual mapping are injective.
We shall see that the dual mappings are indeed topological group embeddings.
\begin{proposition}\label{Prop:1.9}A morphism $G\to H$ in $\mathbf{PCA}$ is a continuous isomorphism if and only if its dual $H^*\to G^*$ is a topological group embedding with dense image.\end{proposition}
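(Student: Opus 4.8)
The plan is to reduce the statement to the Comfort--Ross poset isomorphism (Theorem~\ref{Th:CRdual}(c)) applied to the group $K:=H^*$, supplemented by Corollary~\ref{Coro:epmo}. Throughout I use the identification $H=(H^*)^*$, under which $H$ is a dense subgroup of $\widehat{K_d}$, the topology $\tau_H$ of $K=H^*$ is the one generated by $H$ (equivalently, pointwise convergence on $H$), and each $h\in H$ acts as a continuous character of $K$.

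The core of the argument is the following observation about an \emph{arbitrary} morphism $f\colon G\to H$ in $\mathbf{PCA}$ \emph{with dense image}. Put $M:=f(G)\leq H$. Since $f$ has dense image it is an epimorphism, hence $f^*$ is a monomorphism, i.e.\ $f^*\colon H^*\to G^*$ is injective; and I would identify the topology on $H^*$ that makes $f^*$ a homeomorphism onto its image. A subbasic open set $\{\psi\in G^*:|\psi(g)|<\varepsilon\}$ containing $0$ in $G^*$ meets $f^*(H^*)$ in $\{\chi\circ f:\chi\in H^*,\ |\chi(f(g))|<\varepsilon\}$, and pulling this back along the bijection $f^*$ yields $\{\chi\in H^*:|\chi(f(g))|<\varepsilon\}$. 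As $g$ runs through $G$, the element $f(g)$ runs through $M$, so the transported topology on $K=H^*$ is exactly the topology $\tau_M$ generated by $M\subseteq K^*=H$. Now $M$ is dense in $H$, hence a dense subgroup of $\widehat{K_d}$; so $\tau_M$ is a Hausdorff precompact topology on $K$ with $\tau_M\leq\tau_H$ and, by the Comfort--Ross inversion, $(K,\tau_M)^*=M$. Therefore $f^*$ is a topological embedding if and only if $\tau_M=\tau_H$, which by the poset isomorphism of Theorem~\ref{Th:CRdual}(c) holds if and only if $M=(K,\tau_M)^*=(K,\tau_H)^*=H$, i.e.\ if and only if $f$ is onto.

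Granting this, both implications of the proposition follow quickly. If $f$ is a continuous isomorphism, then $f$ is injective, so $f^*$ has dense image by Corollary~\ref{Coro:epmo}, and $f$ is onto, so $f^*$ is a topological embedding by the observation above; thus $f^*$ is a topological embedding with dense image. Conversely, if $f^*$ is a topological embedding with dense image, then density of the image of $f^*$ gives, via Corollary~\ref{Coro:epmo}, that $f$ is injective, while injectivity of $f^*$ gives, via Corollary~\ref{Coro:epmo} applied to $f^*$ together with $(f^*)^*=f$, that $f$ has dense image; now the observation applies and forces $f$ to be onto, so $f$ is a continuous isomorphism.

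The only delicate point is the bookkeeping in the middle paragraph: one must keep straight that $H^*$ carries the topology of pointwise convergence on $H=(H^*)^*$, that the elements of $f(G)\subseteq H$ are characters of $H^*$, and that under Comfort--Ross the inclusion $f(G)\subseteq H$ of dense subgroups of $\widehat{(H^*)_d}$ becomes the comparison $\tau_M\leq\tau_H$ of precompact topologies on $H^*$. Once this dictionary is set up, the proof is purely formal and involves no estimates.
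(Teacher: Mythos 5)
Your argument is correct. It rests on the same machinery as the paper's proof -- the Comfort--Ross poset isomorphism and Corollary~\ref{Coro:epmo} -- but it is organized around a different key lemma. You first prove, for an arbitrary morphism $f$ with dense image, that the topology transported to $H^*$ along the injection $f^*$ is exactly the topology of pointwise convergence on $M=f(G)$, so that $f^*$ is an embedding iff $\tau_M=\tau_H$ iff $M=H$ by injectivity of the correspondence $\tau\mapsto(K,\tau)^*$; both implications of the proposition then follow from this together with two applications of Corollary~\ref{Coro:epmo}. The paper instead argues each direction directly: for the forward implication it factors $\varphi$ through the initial topology $\tau$ on $G$, so that $\varphi^*$ becomes a topological isomorphism onto the dense subgroup $(G,\tau)^*$ of $G^*$ (the same computation as yours, phrased via the factorization rather than via subbasic open sets); for the converse it identifies $H^*$ with a dense subgroup of $G^*$ and observes that the bidual $(\varphi^*)^*=\varphi$ is then a continuous homomorphism between two precompact topologies on the same underlying group. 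What your route buys is that your intermediate observation is precisely the "surjective $\Leftrightarrow$ dual is an embedding" half of Proposition~\ref{Prop:exact}, proved independently of Proposition~\ref{Prop:1.9}; since the paper derives Proposition~\ref{Prop:exact} from Proposition~\ref{Prop:1.9}, your arrangement would let both results be read off from the single topology computation. Two cosmetic points: the torus is written additively in this paper, so the subbasic sets should read $\{\psi:\psi(g)\in V\}$ for $V$ a neighbourhood of $0$ in $\T$ rather than $|\psi(g)|<\varepsilon$; and you should note explicitly that $\tau_M$ is Hausdorff because $M$, being dense in $H$ and hence in $\widehat{K_d}$, separates the points of $K$ (Theorem~\ref{Th:CRdual}(b)), which is what licenses applying the poset isomorphism to it.
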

\begin{proof}
If the morphism $\varphi: G\to H$ is a continuous homomorphism, then the topology $\tau$ with open sets of the form $\varphi^{-1}(U)$, where $U$ runs over all open sets in $H$, is coarser than the original topology on $G$.
And $\varphi$, considered as the homomorphism of $(G, \tau)\to H$, is a topological isomorphism.
So $\varphi^*$ is exactly a topological isomorphism of $H^*$ to its image $(G, \tau)^*$, which is a dense subgroup of $G^*$, by Proposition \ref{Prop:May} (ii) (or by Corollary \ref{Coro:epmo}).

If $\varphi^*$ is a topological group embedding of $H^*$ into $G^*$ with dense image, then one may identity $H^*$ as a dense subgroup of $G^*$ with $\varphi^*: H^*\to G^*$ the inclusion.
So $\varphi=(\varphi^*)^*$ is a continuous homomorphism between precompact abelian groups with the same underlying group.
In other words, $\varphi$ is a continuous isomorphism.
\end{proof}

\begin{proposition}\label{Prop:exact}Let $f:G\to H$ be a continuous homomorphism of precompact abelian groups.
Then $f$ is surjective if and only if $f^*$ is a topological group embedding.
Moreover, under this case, $f$ is open if and only if $f^*(H^*)$ is closed in $G^*$.
\end{proposition}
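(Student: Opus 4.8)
The plan is to base everything on the canonical factorization $f=\bar f\circ q$, where $N=\ker f$ (a closed subgroup of $G$, since $f$ is continuous and $H$ is Hausdorff), $q\colon G\to G/N$ is the quotient projection, and $\bar f\colon G/N\to H$ is the induced continuous monomorphism, so that $G/N$ again belongs to $\mathbf{PCA}$. The key elementary observation is that $\bar f$ is injective with image $f(G)$, hence $\bar f$ is a \emph{continuous isomorphism} precisely when $f$ is surjective; this is exactly the situation handled by Proposition~\ref{Prop:1.9}, so the work will be to transport statements about $f$ to statements about $\bar f$ through the dual of $q$.

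The one fact I would establish first is about duals of quotient maps: for a closed subgroup $N\le G$ and the projection $q\colon G\to G/N$, the dual $q^*\colon (G/N)^*\to G^*$ is a topological group embedding whose image is the subgroup $A_N=\{\chi\in G^*:\chi\res_{N}=0\}$, and $A_N$ is closed in $G^*$. Injectivity of $q^*$ follows from surjectivity of $q$. Its image is $A_N$ because $q$ is a topological quotient map (an open continuous surjection), so any continuous character of $G$ killing $N$ descends to a continuous character of $G/N$. The set $A_N$ is closed, being $\bigcap_{x\in N}\{\chi\in G^*:\chi(x)=0\}$, an intersection of preimages of $\{0\}$ under the continuous evaluations $\chi\mapsto\chi(x)$ on $G^*$. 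Finally $q^*$ is a homeomorphism onto $A_N$: since $q$ is onto, a net of characters of $G/N$ converges pointwise on $G/N$ if and only if the composed net converges pointwise on $G$, i.e. in $G^*$.

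With this in hand the two assertions are bookkeeping. As $q^*$ identifies $(G/N)^*$ homeomorphically with the closed subgroup $A_N\le G^*$, the composite $f^*=q^*\circ\bar f^*$ is a topological embedding if and only if $\bar f^*$ is. Since $\bar f$ is injective, Corollary~\ref{Coro:epmo} gives that $\bar f^*$ \emph{always} has dense image in $(G/N)^*$. If $f$ is surjective, then $\bar f$ is a continuous isomorphism, so Proposition~\ref{Prop:1.9} makes $\bar f^*$ a topological embedding, whence so is $f^*$. Conversely, if $f^*$ — equivalently $\bar f^*$ — is a topological embedding, then $\bar f^*$ is a topological embedding with dense image, so Proposition~\ref{Prop:1.9} forces $\bar f$ to be a continuous isomorphism, giving $f(G)=\bar f(G/N)=H$. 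This proves the first equivalence. For the ``moreover'' clause, assume $f$ is surjective; then $f^*(H^*)=q^*(\bar f^*(H^*))$ is dense in $q^*((G/N)^*)=A_N$, because $\bar f^*$ has dense image and $q^*$ is a homeomorphism onto $A_N$. Since $A_N$ is closed in $G^*$, the image $f^*(H^*)$ is closed in $G^*$ iff $f^*(H^*)=A_N$, iff $\bar f^*(H^*)=(G/N)^*$ (apply the bijection $q^*$), iff the embedding $\bar f^*$ is surjective, i.e. a topological isomorphism, iff (because $^*$ is a dual isomorphism of $\mathbf{PCA}$, so reflects isomorphisms, and $(\bar f^*)^*=\bar f$) $\bar f$ is a topological isomorphism. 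As $q$ is an open continuous surjection, $f=\bar f\circ q$ is open iff $\bar f$ is open, and the continuous bijection $\bar f$ is open iff it is a topological isomorphism; so $f$ is open iff $f^*(H^*)$ is closed in $G^*$.

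I expect the only genuinely delicate point to be the quotient-map fact in the second paragraph, specifically the verification that $q^*$ is \emph{open onto its image} (not merely a continuous bijection onto $A_N$) and that $A_N$ is closed in $G^*$; once those are in place, the remaining steps only combine them with Corollary~\ref{Coro:epmo} and Proposition~\ref{Prop:1.9}. A secondary point needing care is the ``moreover'' direction, where one must keep in mind that $f^*(H^*)$ is a priori only \emph{dense} in $A_N$ — it is precisely this that lets ``closed in $G^*$'' be upgraded to the equality $f^*(H^*)=A_N$, and hence to topological invertibility of $\bar f$.
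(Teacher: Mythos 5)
Your argument is correct, and it takes a genuinely different route from the paper's. You factor $f=\bar f\circ q$ through $G/\ker f$ on the primal side once and for all, and isolate a single auxiliary fact --- that $q^*$ is a topological group embedding of $(G/N)^*$ onto the closed annihilator $A_N=\{\chi\in G^*:\chi(N)=\{0\}\}$, verified directly from the pointwise-convergence topology --- after which all four implications follow formally from Corollary \ref{Coro:epmo} and Proposition \ref{Prop:1.9}. The paper instead works mostly on the dual side: for ``surjective $\Rightarrow$ embedding'' it factors $f^*$ through its image and dualizes back; for ``embedding $\Rightarrow$ surjective'' it invokes the extendability of continuous characters over the whole group rather than Proposition \ref{Prop:1.9}; for ``open $\Rightarrow$ closed image'' it factors $f^*$ through the closure of its image; only for ``closed image $\Rightarrow$ open'' does it use your factorization through $G/N$. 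What your organization buys is a unified argument and an explicit precompact analogue of Lemma \ref{Le:1}(ii); the crux is exactly where you locate it, namely that $A_N$ is closed and $q^*$ is open onto $A_N$, since that is what upgrades ``$f^*(H^*)$ is closed in $G^*$'' to ``$\bar f^*$ is surjective''. What the paper's version buys is that it never needs to compute the image of the dual of a quotient map, at the cost of two separate dual-side factorizations and an appeal to the character-extension theorem.
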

\begin{proof}
First we assume that $f$ is surjective. By Corollary \ref{Coro:epmo}, $f^*$ is injective.
Therefore $f^*$ is the composition $j\circ f_0$ of a continuous isomorphism $f_0: H^*\to f^*(H^*)$ and the inclusion $j: f^*(H^*)\to G^*$.
$$\xymatrix{
 &G^* \\
 &H^* \ar[u]^{f^*} \ar[r]_{f_0}  &f^*(H^*)\ar[ul]_{j}
 }$$

Then $f=(f^*)^*=f_0^*\circ j^*$ and, by Proposition \ref{Prop:1.9}, $f_0^*$ is a topological group embedding with dense image.
See the following commutative diagram.
$$\xymatrix{
 &G\ar[d]_f \ar[dr]^{j^*}\\
 &H &(f^*(H^*))^*\ar[l]^{f_0^*}
 }$$

On the other hand, since $f$ is surjective, so is $f_0^*$.
Then $f_0^*$ is a topological isomorphism, and therefore, so is $f_0$.
So $f^*$ is a topological group embedding.

To see the other direction,  we recall the widely known fact that every continuous character of a subgroup of a precompact abelian group extends continuously over the whole group.
Hence, if $f^*:H^*\to G^*$ is a topological group embedding, then for each $h\in (H^*)^*=H$, there exists $g\in (G^*)^*=G$ such that $g\circ f^*=h$, i.e., $f(g)=h$.
So $f$ is surjective.

Now consider the second assertion and assume that $f: G\to H$ is surjective and open.
By the above argument, $f^*:H^*\to G^*$ is a topological group embedding.
Let $K$ be the closure of $f^*(H^*)$ in $G^*$, then $f^*= j_K\circ f_K$, where $f_K$ is exactly $f^*$, but its codomain is considered to be $K$, and $j_K:K\to G^*$ the inclusion.

$$\xymatrix{
 &G^* \\
 &H^* \ar[u]^{f^*} \ar[r]_{f_K}  &K\ar[ul]_{j_K}
 }$$

Dualize the above commutative diagram we have the following one.

$$\xymatrix{
 &G\ar[d]_f \ar[dr]^{j_K^*}\\
 &H &K^*\ar[l]^{f_K^*}
 }$$

Since $f_K$ is a topological group embedding with dense image, $f_K^*$ is a continuous isomorphism by Proposition \ref{Prop:1.9}.
It is evident that $j_K^*$ is surjective.
Then for every open subset $U$ in $K^*$, $f_K^*(U)=f((j_K^*)^{-1}(U))$ is open in $H$.
In other words, $f_K^*$ is a topological isomorphism.
Then, also is $f_K$, i.e, $f^*(H^*)=K$ is closed.

Conversely, suppose that $f$ is surjective and $f^*$ has closed image.
Let $N$ be the kernel of $f$.
Then $f$ induces a continuous isomorphism $\widetilde{f}:G/N\to H$ such that $f=\widetilde{f}\circ \pi$, where $\pi:G\to G/N$ is the canonical projection.
$$\xymatrix{
 &G\ar[d]_\pi\ar[dr]^{f}\\
 &G/N\ar[r]_{\widetilde{f}} &H
 }$$

By our result of the first part, all arrows in the dual diagram are topological group embeddings and the image of $\widetilde{f}^*$ is dense in $(G/N)^*$.
$$\xymatrix{
 &G^* \\
 &(G/N)^* \ar[u]^{\pi^*} & H^*\ar[l]^{\widetilde{f}^*}\ar[ul]_{f^*}
 }$$
Then $\widetilde{f}^*$ must be surjective since $f^*(H^*)$ is closed in $G^*$.
Therefore, $\widetilde{f}^*$ is a topological isomorphism and so is $\widetilde{f}$.
Then $f$ is open.
\end{proof}

The above proposition implies the following result, which will be used frequently in this paper, immediately.
\begin{corollary} If $H$ is a closed subgroup (resp. quotient group) of a precompact abelian group $G$, then $H^*$ is a quotient group (resp. closed subgroup) of $G^*$.\end{corollary}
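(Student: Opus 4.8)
The plan is to read off both assertions from Proposition~\ref{Prop:exact}, using only that $^{*}$ is a functor on $\mathbf{PCA}$ and that the evaluation identifies $(f^{*})^{*}$ with $f$. The point is that Proposition~\ref{Prop:exact} is a statement about a morphism and its dual, and ``closed subgroup'' and ``quotient'' are precisely the two notions that get swapped; so one direction is a literal instance of the proposition and the other is its dual instance, obtained by feeding the \emph{dual} morphism into the proposition.

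First I would treat the quotient case, which is immediate. If $H$ is a quotient group of $G$, pick a continuous open surjective homomorphism $q\colon G\to H$; note $H$ is again precompact abelian, being a continuous image of one. Since $q$ is surjective, Proposition~\ref{Prop:exact} gives that $q^{*}\colon H^{*}\to G^{*}$ is a topological group embedding, and since $q$ is moreover open, the ``moreover'' clause gives that $q^{*}(H^{*})$ is closed in $G^{*}$. Hence $H^{*}$ is topologically isomorphic to the closed subgroup $q^{*}(H^{*})$ of $G^{*}$.

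For the subgroup case, suppose $H$ is a closed subgroup of $G$. Then $H$ is precompact abelian, and because $H$ carries the subspace topology, the inclusion $\iota\colon H\to G$ is a topological group embedding with closed image. Now apply Proposition~\ref{Prop:exact} to the morphism $f:=\iota^{*}\colon G^{*}\to H^{*}$. Identifying $(G^{*})^{*}=G$ and $(H^{*})^{*}=H$ via the evaluation, we have $f^{*}=(\iota^{*})^{*}=\iota\colon H\to G$. Since $\iota$ is a topological group embedding, Proposition~\ref{Prop:exact} (with $f=\iota^{*}$) yields that $\iota^{*}$ is surjective; and since $f^{*}\big((H^{*})^{*}\big)=\iota(H)=H$ is closed in $G=(G^{*})^{*}$, the ``moreover'' clause yields that $\iota^{*}$ is open. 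Therefore $\iota^{*}\colon G^{*}\to H^{*}$ is a continuous open surjection, so $H^{*}\cong G^{*}/\ker(\iota^{*})$ is a quotient group of $G^{*}$. There is no real obstacle beyond bookkeeping here; the only points that need care are (i) that a closed subgroup carries the subspace topology, so that its inclusion genuinely is a topological group embedding in the sense used in the paper, and (ii) keeping the canonical identifications $G=(G^{*})^{*}$ straight when one applies Proposition~\ref{Prop:exact} to $\iota^{*}$ rather than to $\iota$.
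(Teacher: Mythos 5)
Your proof is correct and takes essentially the same route as the paper, which states the corollary as an immediate consequence of Proposition \ref{Prop:exact}: the quotient case is the direct application to the open quotient map, and the closed-subgroup case is the dual application to $\iota^{*}$ via the identification $(\iota^{*})^{*}=\iota$. The bookkeeping with the evaluation isomorphism is handled correctly, so there is nothing to add.
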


Recall that the Bohr topology for an abstract group is the finest precompact topology.
An abelian group $G$ endowed with the Bohr topology  has the following properties \cite[Chapter 9]{AT}: (i) every subgroup of $G$ is closed and has its Bohr topology; and (ii) every quotient group of $G$ has the Bohr topology.

\begin{proposition}A precompact abelian group $G$ is compact if and only if $G^*$ carries the Bohr topology.\end{proposition}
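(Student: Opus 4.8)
The plan is to deduce the equivalence directly from the Comfort--Ross order isomorphism (Theorem~\ref{Th:CRdual}(c)) applied to the abstract group underlying $G^*$. Recall that the Bohr topology of an abstract abelian group $A$ is, by definition, the finest precompact group topology on $A$, i.e.\ the greatest element of the poset $\mathcal{T}(A)$. Under the poset isomorphism $\tau\mapsto (A,\tau)^*$ from $\mathcal{T}(A)$ onto the poset $\mathcal{S}(\widehat{A_d})$ of dense subgroups of $\widehat{A_d}$, the greatest element on the right-hand side is $\widehat{A_d}$ itself. Consequently, a precompact topology $\tau$ on $A$ is the Bohr topology if and only if $(A,\tau)^*=\widehat{A_d}$.

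Next I would apply this with $A$ the underlying abstract group of $G^*$ and $\tau$ its Comfort--Ross topology. Since the evaluation $\Psi\colon G\to (G^*)^*$ is a topological isomorphism, we may read $G=(G^*)^*$, which by Theorem~\ref{Th:CRdual}(a) applied to $G^*$ is a dense subgroup of $\widehat{(G^*)_d}$; moreover $\widehat{(G^*)_d}$ is a compact group in which $G$ sits densely, so $\widehat{(G^*)_d}=\varrho G$. The previous paragraph then gives: $G^*$ carries the Bohr topology if and only if $(G^*)^*=\widehat{(G^*)_d}$, that is, if and only if $G=\varrho G$.

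Finally, $G=\varrho G$ is precisely the statement that $G$ is compact: if $G$ is compact, then $G$, being a compact (hence closed) dense subgroup of the Hausdorff group $\varrho G$, must coincide with $\varrho G$; conversely $\varrho G$ is compact. Chaining the equivalences yields the proposition. I do not expect a genuine obstacle; the only point requiring care is the bookkeeping of identifications --- applying the Comfort--Ross correspondence to $G^*$ rather than to $G$, and using the evaluation isomorphism to recognise $(G^*)^*$ as $G$ sitting densely in $\widehat{(G^*)_d}=\varrho G$ --- after which compactness falls out of the observation that a dense subgroup of a compact Hausdorff group equals the whole group exactly when it is closed, i.e.\ exactly when it is compact.
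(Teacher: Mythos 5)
Your proof is correct and follows essentially the same route as the paper: both identify the Bohr topology with the greatest element of $\mathcal{T}(G^*)$, use the Comfort--Ross poset isomorphism to see that it corresponds to the largest dense subgroup $\widehat{G^*_d}$ itself, and conclude that $G^*$ carries the Bohr topology exactly when $G=(G^*)^*$ equals the compact group $\widehat{G^*_d}=\varrho G$. Your write-up merely spells out the identifications a bit more explicitly than the paper does.
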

\begin{proof} By Theorem \ref{Th:CRdual}, $G^*$ has its Bohr topology  if and only if $G^{**}\cong G$ is the largest dense subgroup of $\widehat{G^*_d}$, where $G^*_d$ is the discrete group $G^*$.
The largest dense subgroup is of course $\widehat{G^*_d}$ itself, which is compact.\end{proof}

For $\kappa$ an infinite cardinal, a non-empty subset $U$ of a topological space $X$ is called a $G_\kappa$ set if $U$ is the intersection of (at most) $\kappa$ many open subsets of $X$.
When $\kappa=\omega$, $G_\kappa$ sets are known as $G_\delta$ sets.
A (normal) subgroup of a topological group of form $G_\kappa$ is called a \emph{$G_\kappa$ (normal) subgroup}.
For a topological group $H$ with a dense subgroup $G$ and an infinite cardinal number $\kappa$, we call that $G$ is \emph{$G_\kappa$-essential} in $H$ if $G$ non-trivially intersects non-trivial closed $G_\kappa$ normal subgroup of $H$.

\begin{corollary}\label{Coro:den}For a precompact abelian group $G$ and an infinite cardinal $\kappa$ such that $|G|\geq\kappa$, the following conditions are equivalent:

\begin{itemize}
  \item[(a)] $d(G)\geq\kappa$;
  \item[(b)] for every subset $X$ of $G$ with $|X|< \kappa$, there exists a non-trivial continuous character $\chi$ of $G$ such that $X\subseteq \ker \chi$;
  \item[(c)] $G^*$ is a $G_\mu$-essential subgroup of $\widehat{G_d}$ provided $\mu<\kappa$.
  \end{itemize}
  \end{corollary}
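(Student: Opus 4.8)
The plan is to establish \textup{(a)}$\Leftrightarrow$\textup{(b)} and \textup{(b)}$\Leftrightarrow$\textup{(c)} separately.

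For \textup{(a)}$\Leftrightarrow$\textup{(b)}, I would first reformulate \textup{(b)}. Since the continuous characters of a precompact abelian group separate its points, every non-trivial precompact abelian group admits a non-trivial continuous character; applying this to the quotient $G/\overline{\hull{X}}$ shows that, for a subset $X\sub G$, there is a non-trivial continuous character $\chi$ of $G$ with $X\sub\ker\chi$ if and only if $\overline{\hull{X}}\neq G$ (the forward implication being clear, as $\ker\chi$ is then a proper closed subgroup containing $\overline{\hull{X}}$). Hence \textup{(b)} says exactly that no subgroup of $G$ generated by fewer than $\kappa$ elements is dense. It then remains to note that, $G$ being infinite, $d(G)$ equals the least cardinality of a dense \emph{subgroup} of $G$ — a dense subgroup is a dense subset, and if $D$ is a dense subset with $|D|=d(G)$ then $\hull{D}$ is a dense subgroup of cardinality $d(G)$ — so the above reformulation of \textup{(b)} is equivalent to $d(G)\ge\kappa$, i.e. to \textup{(a)}.

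For \textup{(b)}$\Leftrightarrow$\textup{(c)} I would work inside the compact group $\widehat{G_d}$, whose Pontryagin dual is $G_d$. The annihilator map $N\mapsto N^{\perp}$ (with $N^{\perp}\sub G_d$) is an inclusion-reversing bijection from the closed subgroups of $\widehat{G_d}$ onto the subgroups of $G_d$, with $(N^{\perp})^{\perp}=N$, and $N$ is non-trivial exactly when $N^{\perp}\neq G_d$. By Lemma~\ref{Le:1}(i) applied to $G_d$ and a subgroup $Y$ one has $\widehat{G_d}/Y^{\perp}\cong\widehat{Y}$, so $w(\widehat{G_d}/Y^{\perp})=|Y|$; together with the standard fact that a closed subgroup $N$ of a compact group $K$ is a $G_\mu$-set precisely when $w(K/N)\le\mu$ (which uses that for infinite precompact groups the weight equals the local weight), this gives that a closed subgroup $N\le\widehat{G_d}$ is $G_\mu$ if and only if $|N^{\perp}|\le\mu$. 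Finally, for a subgroup $Y\le G$, the existence of a non-trivial continuous character $\chi$ of $G$ with $Y\sub\ker\chi$ is literally the assertion $G^{*}\cap Y^{\perp}\neq\{0\}$, because $G^{*}$ is a subgroup of $\widehat{G_d}$ and ``$Y\sub\ker\chi$'' means ``$\chi\in Y^{\perp}$''. Putting this together: in \textup{(b)} it suffices to let $X$ range over the subgroups $Y$ of $G$ with $|Y|<\kappa$ (since $X\sub\ker\chi\iff\hull{X}\sub\ker\chi$, and $|Y|<\kappa\le|G|$ forces $Y\neq G$), and by the annihilator correspondence these $Y$ are exactly the $N^{\perp}$ for $N$ ranging over the non-trivial closed $G_\mu$-subgroups of $\widehat{G_d}$ with $\mu<\kappa$, the character condition becoming $G^{*}\cap N\neq\{0\}$. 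This is precisely the assertion that $G^{*}$ is $G_\mu$-essential in $\widehat{G_d}$ for every $\mu<\kappa$, i.e. \textup{(c)}.

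The step I expect to require the most care is the translation inside \textup{(b)}$\Leftrightarrow$\textup{(c)}: keeping track of which annihilator lives in $\widehat{G_d}$ and which in $G_d=\widehat{\widehat{G_d}}$, verifying ``$N$ is $G_\mu$'' $\iff$ ``$|N^{\perp}|\le\mu$'' via Lemma~\ref{Le:1} and the weight/local-weight identity for precompact groups, and matching the quantifier ``for every subset of size $<\kappa$'' in \textup{(b)} with ``for every infinite $\mu<\kappa$ and every non-trivial closed $G_\mu$-subgroup'' in \textup{(c)} without gaining or losing cases (in particular, the non-triviality of $N$ is free because $|G|\ge\kappa$). By contrast, the reformulations behind \textup{(a)}$\Leftrightarrow$\textup{(b)} are routine once one records that precompact abelian groups have enough characters.
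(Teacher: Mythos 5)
Your proof is correct and overlaps substantially with the paper's, but the overall architecture differs. The paper proves the cycle (a)$\Rightarrow$(b)$\Rightarrow$(c)$\Rightarrow$(a): its (a)$\Rightarrow$(b) is the forward half of your (a)$\Leftrightarrow$(b) (pass to $G/\overline{\hull{X}}$ and pull back a non-trivial character), and its (b)$\Rightarrow$(c) is exactly your annihilator computation ($N$ closed and $G_\mu$ gives $|N^\perp|\le\mu$ via $\widehat{N^\perp}\cong\widehat{G_d}/N$, then (b) produces a non-trivial $\chi\in G^*\cap N$). Where you genuinely diverge is in closing the loop: the paper's (c)$\Rightarrow$(a) invokes Proposition \ref{Prop:May}(iii) to turn a dense subgroup of size $<\kappa$ into a continuous isomorphism $f\colon G^*\to H$ with $w(H)<\kappa$, extends $f$ over the completions, and derives a contradiction from $G^*\cap\ker f'=\mathbf{0}$; you instead prove (c)$\Rightarrow$(b) by running the annihilator correspondence backwards ($Y\mapsto Y^\perp$ carries subgroups of size $<\kappa$ to non-trivial closed $G_\mu$-subgroups, non-triviality being exactly where $|G|\ge\kappa$ is used) and then (b)$\Rightarrow$(a) by the elementary remark that $d(G)$ is the least cardinality of a dense subgroup. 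Your route stays entirely on the character/annihilator side and avoids the completion argument, at the cost of having to verify carefully that the two quantifier ranges match; the paper's route reuses machinery it has already set up and gets (c)$\Rightarrow$(a) in one step.

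One caveat, which you share with the paper rather than introduce: both your (a)$\Leftrightarrow$(b) and the paper's (a)$\Rightarrow$(b) rest on the inference that $|X|<\kappa$ forces $\hull{X}$ to be non-dense when $d(G)\ge\kappa$, which needs $|X|+\omega<\kappa$ and hence $\kappa>\omega$. For $\kappa=\omega$ the equivalence actually fails: $G=\T$ and $X=\{x\}$ with $x$ of infinite order satisfies (a) but not (b), since every continuous character of $\T$ killing $x$ is trivial. This is a defect of the statement at $\kappa=\omega$, not a gap in your argument relative to the paper's.
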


\begin{proof}We first note that by the theorem of Comfort and Ross, $G^*$ is a dense subgroup of $\widehat{G_d}$; and by Proposition \ref{Prop:wei}, $G^*$ has weight $\geq \kappa$.

(a) $\Rightarrow$ (b). Since $d(G)\geq\kappa$ and $|X|<\kappa$, the subgroup $\hull{X}$ of $G$ is not dense.
Let $H$ be the closure of $\hull{X}$ and take a continuous character $f$ of $G/H$.
Then $\chi:=f\circ \pi$ is the desired character of $G$, where $\pi: G\to G/H$ is the canonical projection.

(b) $\Rightarrow$ (c). Let $N$ be a closed $G_\mu$ subgroup of $\widehat{G_d}$.
Then $\widehat{G_d}/N$ has weight $\leq \mu$.
So the subgroup $N^\perp$ of $G_d$ has cardinality $\leq \mu$, since $\widehat{N^\perp}\cong \widehat{G_d}/N$ by Lemma \ref{Le:1}.
By (b), there exists a non-trivial continuous character $\chi$ of $G$ such that $N^\perp$ is contained in $\ker \chi$.
Then $\chi\in N$.
As a continuous character of $G$, $\chi$ in also in $G^*$.
So we have $G^*\cap N\neq \emptyset$.

(c) $\Rightarrow$ (a).
It suffices to show that $G$ does not have a dense subgroup of cardinality $<\kappa$.
From now on, we only discuss Hausdorff topological groups.
By Proposition \ref{Prop:May} (iii), this is equivalent to verify that $G^*$ does not admit a continuous isomorphism onto a topological group with weight $<\kappa$.
Suppose for the contrary, let $f: G^*\to H$ be  a continuous isomorphism with $w(H)=\mu<\kappa$.
Then $f$ admits a continuous extension $f': \widehat{G_d}\to \varrho H$  over their completions, which is a group homomorphism.
Note that $\ker f'$ is a non-trivial closed $G_\mu$ subgroup of $\widehat{G_d}$ since $w(\varrho H)=w(H)=\mu <\kappa\leq w(\widehat{G_d})$.
So our assumption implies that $G^*\cap \ker f'\neq\mathbf{0}.$
On the other hand, $f'\res_{G^*}=f$ is injective, so has trivial kernel.
In other words, $G^*\cap \ker f'=\mathbf{0}.$
This produces a contradiction.\end{proof}

Note that if $|G|< \kappa$, then $(a)$ and $(b)$ can never hold.
However, in the case $G^*=\widehat{G_d}$, i.e., when $G$ carries the Bohr topology, (c) is true.
That is the reason why we assume $|G|\geq \kappa$.

\section{The Cardinals $\mathfrak{m}$ and $\mathfrak{n}$}\label{Sec2}

In this section, we mainly discuss $\mathfrak{m}$ and $\mathfrak{n}$.
We shall denote $ln \mathfrak{c}^+$ by $\kappa$ in the following theorem, where $\mathfrak{c}^+$ is the successor of $\mathfrak{c}$; hence $\kappa$ is the least cardinal with the property $2^\kappa>\mathfrak{c}$.
In other words, for every cardinal $\alpha<\kappa$, $2^\alpha\leq \mathfrak{c}$.
So a set of size $\mathfrak{c}$ has exactly $\mathfrak{c}$ many subsets of cardinality strictly less than $\kappa$.

\begin{theorem}\label{Th:1}There exists a free, dense, zero-dimensional subgroup $G$ of $\mathbb{T}^\mathfrak{c}$ of size $\mathfrak{c}$ such that every non-trivial quotient group of $G$ has density $\kappa$.\end{theorem}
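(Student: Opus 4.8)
\emph{Setup and reduction.} I would realize $G$ as a dense subgroup of $\mathbf{T}:=\mathbb{T}^{\mathfrak c}$ generated by $\mathfrak c$ carefully chosen elements; being a dense subgroup of a compact group, $G$ is automatically precompact, and density in $\mathbf T$ gives $\varrho G=\mathbf T$, so $G^{*}=\widehat{\mathbf T}=\bigoplus_{\mathfrak c}\mathbb{Z}=:\Gamma$, every continuous character of $G$ extending uniquely over $\mathbf T$. Three requirements then remain, and I would reduce each to a statement about the chosen generators. First, once $d(G)\le\kappa$, every quotient $G/N$ has $d(G/N)\le\kappa$, since the image of a dense subgroup of size $\le\kappa$ is dense; I would secure this by arranging that the subgroup generated by the first $\kappa$ generators is already dense in $\mathbf T$. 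Secondly, by Corollary \ref{Coro:den} applied to each quotient (note that $d(G/N)\ge\kappa$ forces $|G/N|\ge\kappa$, so the cardinality hypothesis of that corollary is automatically met), the requirement ``$d(G/N)\ge\kappa$ for every proper closed $N$'' is equivalent to: for every proper closed $N\le G$ and every $Y\subseteq G$ with $|Y|<\kappa$ there is a non-zero $\gamma\in\Gamma$ vanishing on $N\cup Y$ — i.e. \emph{no subgroup $N+\langle Y\rangle$ with $N$ proper closed and $|Y|<\kappa$ is dense in $\mathbf T$}; taking $N=\{0\}$ also yields $d(G)\ge\kappa$, so together with the first point $d(G)=\kappa$. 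Thirdly, $G$ will be zero-dimensional provided $\pi_i(G)\ne\mathbb{T}$ for every coordinate $i<\mathfrak c$: a proper subgroup of $\mathbb{T}$ contains no arc and is therefore zero-dimensional, and $G$ embeds into the product $\prod_{i<\mathfrak c}\pi_i(G)$ of zero-dimensional spaces. Freeness of rank $\mathfrak c$ is kept simply by taking the generators $\mathbb{Z}$-independent.

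\emph{The recursion.} Since $\kappa$ is least with $2^{\kappa}>\mathfrak c$, we have $2^{\mu}\le\mathfrak c$ for all $\mu<\kappa$, hence $2^{<\kappa}=\mathfrak c$ and $\mathfrak c^{<\kappa}=\mathfrak c$; so a set of size $\mathfrak c$ has exactly $\mathfrak c$ subsets of size $<\kappa$. Fixing the underlying set of $G$ to be $\mathfrak c$ in advance, I would carry out a transfinite recursion of length $\mathfrak c$, choosing a generator $x_\alpha\in\mathbf T$ at stage $\alpha$, with a bookkeeping that lists in $\mathfrak c$ steps all the tasks: (a) for each $\delta\in\Gamma\setminus\{0\}$ witness $\delta(x_\beta)\ne 0$ for some $\beta$, so that $G$ becomes dense in $\mathbf T$; (b) for each $i<\mathfrak c$ keep a pre-chosen point $p_i$ out of $\pi_i(G)$; (c) defeat every potential bad event of the second requirement. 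For (c) one checks it suffices to handle pairs $(\delta,Y)$ with $\delta\in\Gamma\setminus\{0\}$ and $Y$ a $<\kappa$-sized subset of the eventual $G$: if $N+\langle Y\rangle$ were dense in $\mathbf T$ with $N$ proper closed, then, choosing $0\ne\delta\in N^{\perp}$, also $\overline{\ker\delta\cap G}^{\mathbf T}+\overline{\langle Y\rangle}^{\mathbf T}=\mathbf T$, so it is enough to prevent this — and there are only $\mathfrak c$ such pairs. At stage $\alpha$, $x_\alpha$ is chosen with coordinates rationally independent from the fewer than $\mathfrak c$ relations already imposed, so that it stays $\mathbb{Z}$-independent of the earlier generators, respects all the $p_i$, advances density of $G$ (and, for $\alpha<\kappa$, density of $\langle x_\beta:\beta<\kappa\rangle$), and, for the pair $(\delta,Y)$ assigned to $\alpha$, prevents $\ker\delta\cap G$ from spreading into coordinates kept in reserve, so that a non-zero character supported on those reserved coordinates annihilates $\ker\delta\cap G$ and $Y$. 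Each choice exists because at stage $\alpha$ only $<\mathfrak c$ coordinates of $\mathbf T$ have been committed, $\mathbf T$ has $\mathfrak c$ of them, and $\mathbb{T}$ has $\mathbb{Q}$-rank $\mathfrak c$.

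\emph{The main difficulty} is reconciling the upper bound $d(G)\le\kappa$ with the lower bounds $d(G/N)\ge\kappa$: $G$ must carry a dense subgroup of size $\kappa$ but \emph{no} dense subgroup of size $<\kappa$ — in particular $G$ cannot be monothetic — and at the same time no proper quotient may acquire a dense subgroup of size $<\kappa$. Hence no generator may be a ``generic'' element of $\mathbf T$ with all coordinates rationally independent, for such an element generates a dense cyclic subgroup and would collapse $d(G)$ to $\omega$; yet the generators must not be so constrained that $G$, or one of its quotients, fails to be dense in its completion, or a small dense relative subgroup appears. The construction must therefore spread the supports of the first $\kappa$ generators across all $\mathfrak c$ coordinates (keeping the completion $\mathbf T$ connected and $\langle x_\beta:\beta<\kappa\rangle$ dense), while at every stage retaining enough ``fresh'' coordinates — and enough still-unused generators — to exhibit, for each proper closed $N$ that eventually materialises, a continuous character separating $N$ together with an arbitrary set of size $<\kappa$. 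Verifying that this bookkeeping closes up, so that the resulting $G$ is free of rank $\mathfrak c$, dense and zero-dimensional in $\mathbf T$, with every non-trivial quotient of density exactly $\kappa$, is the heart of the argument; freeness and zero-dimensionality are cheap once the coordinatewise bookkeeping is in place.
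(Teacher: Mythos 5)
Your reformulation is sound and, via Corollary \ref{Coro:den}, lands on essentially the same character-theoretic target as the paper: build $G\le\mathbb{T}^{\mathfrak c}$ dense and free on $\mathfrak c$ generators so that (1) the first $\kappa$ generators already generate a dense subgroup, and (2) for every non-zero $\delta\in\Gamma=\bigoplus_{\mathfrak c}\mathbb{Z}$ and every $Y\subseteq G$ with $|Y|<\kappa$ there is a non-zero $\gamma\in\Gamma$ vanishing on $(\ker\delta\cap G)\cup Y$. (Your parenthetical about the cardinality hypothesis of Corollary \ref{Coro:den} is stated backwards, but harmlessly: if $|G/N|<\kappa$, then (2) already fails for $Y$ a set of coset representatives of $N$, so verifying (2) automatically yields $|G/N|\ge\kappa$.) The genuine gap is in the recursion itself. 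A task of type (c) is \emph{not} a condition on the single generator $x_\alpha$ chosen at the stage where the pair $(\delta,Y)$ is treated: the promise ``some non-zero $\gamma$ supported on reserved coordinates annihilates $\ker\delta\cap G$'' unwinds to ``for every word $w$ in \emph{all} the generators, $\delta(w)=0\Rightarrow\gamma(w)=0$'', and $\ker\delta\cap G$ is determined only by the completed construction. By stage $\alpha$ you are therefore carrying up to $|\alpha|$ (eventually $\mathfrak c$) such implication-type promises, each constraining how every \emph{subsequent} generator may sit relative to $\delta$ and to the reserved coordinates, and these must be reconciled with the competing demands that the generators stay $\mathbb{Z}$-independent, that $\langle x_\beta:\beta<\kappa\rangle$ be dense (which, as you note, forbids any single generator from being generic), and that every coordinate projection remain proper. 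You explicitly defer this (``the heart of the argument''), but it is exactly where the content of the theorem lies; nothing in the sketch shows that the accumulating constraints remain simultaneously satisfiable at every stage.

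For comparison, the paper avoids the recursion entirely with a one-shot construction on the dual side: enumerate $[X]^{<\kappa}=\{Y_\beta:\beta<\mathfrak c\}$ for $X$ a free basis of $F(X)$, and for each $\beta$ take a character $f_\beta$ of $F(X)$ with kernel exactly $F(Y_\beta)$ and image inside a summand $Q_\beta$ of $\mathbb{Q}^{(\mathfrak c)}=\bigoplus_\beta Q_\beta\subseteq\mathbb{T}$. The directness of the sum forces $\ker\bigl(\sum_i k_if_{\beta_i}\bigr)=F\bigl(\bigcap_i Y_{\beta_i}\bigr)$, of size $<\kappa$, so every non-zero character in the dual group has small kernel, while every $<\kappa$-sized subset of $F(X)$ lies in some $F(Y_\beta)=\ker f_\beta$; these two facts deliver (1) and both halves of (2) uniformly, with no bookkeeping. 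If you want to salvage the recursion, the natural way to make your promises self-maintaining is to impose precisely this global independence structure on all coordinates from the outset --- at which point the transfinite induction becomes redundant.
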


\begin{proof}Let $X=\{x_\alpha:\alpha<\mathfrak{c}\}$ be a set of size $\mathfrak{c}$ and $\eta=[X]^{<\kappa}$ the set of all subsets of $X$ of size $<\kappa$.
Then by the above argument on $\kappa$ we have that $|\eta|=\mathfrak{c}$; thus we write $\eta=\{Y_\beta:\beta<\mathfrak{c}\}$.
Let $F(X)$ (resp. $F(Y_\beta)$) be the free abelian group with the free basis $X$ (resp. $Y_\beta$).
Then each $F(Y_\beta)$ is a direct summand of $F(X)$.
Note that $\mathbb{Q}^{(\mathfrak{c})}$ splits in to the form $\bigoplus_{\beta<\mathfrak{c}}Q_\beta$, where each $Q_\beta$ is a copy of $\mathbb{Q}^{(\mathfrak{c})}$.
For each $\beta<\mathfrak{c}$, we fix a homomorphism $f_\beta: F(X)\to Q_\beta\hookrightarrow \mathbb{Q}^{(\mathfrak{c})}$ with kernel $F(Y_\beta)$.
Since $\mathbb{Q}^{(\mathfrak{c})}$ is a subgroup of $\mathbb{T}$, the homomorphisms $f_\beta$ can be considered as continuous characters of the discrete group $F(X)$; so they are elements in $\widehat{F(X)}$.
It is easy to see that $\{f_\alpha:\alpha<\mathfrak{c}\}$ is an independent subset of $\widehat{F(X)}$, so it generates a free subgroup $A$.
Moreover, $A$ separates points of $F(X)$, and therefore it is dense in $\widehat{F(X)}\cong \mathbb{T}^\mathfrak{c}$, according to Proposition \ref{Prop:May} (i).
Let $B$ be any non-trivial closed subgroup of $A$ and $\overline{B}$ the closure of $B$ in $\widehat{F(X)}$.
\vspace{0.2cm}

\noindent{\textbf {Claim 1.}}  $w(\widehat{F(X)}/\overline{B})<\kappa$ and $w(B)=\mathfrak{c}$.
\begin{proof}[Proof of Claim 1.]
Let $g$ be a non-zero element in $B$ and $T$ the closure of $\hull{g}$ in $\widehat{F(X)}$.
Since $A$ is generated by those $f_\beta$, $g=\Sigma_{i=1}^n k_if_{\beta_i}$ for some positive integer $n$; with each $k_i$ is a non-zero integer and $\beta_i<\mathfrak{c}$.
As the image of $f_{\beta_i}$ is in $Q_{\beta_i}$, the sum
$$f_{\beta_1}(F(X))+f_{\beta_2}(F(X))+...+f_{\beta_n}(F(X))$$
is direct.
Then the kernel $N$ of $g$ is exactly $\bigcap_{i=1}^n F(Y_{\beta_i})$, which is obviously of size $<\kappa$.
By lemma \ref{Le:1} , one obtains that $\widehat {F(X)}/T\cong \widehat{N}$.
The well-known fact that $w(\widehat{N})=|N|$ then implies that $w(\widehat{F(X)}/T)<\kappa$.
Since $g\in B$, $T$ is contained in $\overline{B}$.
Thus we have $w(\widehat{F(X)}/\overline{B})<\kappa$.
This implies particularly that $w(A/B)<\kappa\leq \mathfrak{c}$ as $A/B$ is a dense subgroup of $\widehat{F(X)}/\overline{B}$.
Therefore, from the equalities $w(A)=w(B)w(A/B)$ and $w(A)=w(\widehat{F(X)})=\mathfrak{c}$ one obtains that $w(B)=\mathfrak{c}$.
\end{proof}


\noindent{\textbf {Claim 2.}} $B$ is $G_\nu$-essential in $\overline{B}$ for every $\nu<\kappa$.
\begin{proof}[Proof of Claim 2.] By Claim 1, $\widehat{F(X)}/\overline{B}$ has weight less than $\kappa$.
In other words,  $\overline{B}$ is a $G_\lambda$ subgroup of $\widehat{F(X)}$ for some $\lambda<\kappa$.
It is evident that $G_\nu$-essentiality implies $G_\mu$-essentiality provided $\mu\leq \nu$, hence we can only prove the claim for $\lambda\leq \nu<\kappa$.
Under such an assumption on $\nu$, a closed $G_\nu$ subgroup $H$ of $\overline{B}$ is also closed in $\widehat{F(X)}$ of type $G_\nu$.
Since $\widehat{F(X)}\cong \mathbb{T}^{\mathfrak{c}}$ carrying the product topology, $H$ contains all but $\nu$ many factors of $\mathbb{T}^{\mathfrak{c}}$.
In other words, there exists $\beta<\mathfrak{c}$ such that $H$ contains all characters $\chi$ of $F(X)$ with $\ker \chi\supseteq F(Y_\beta)$.
This yields that $$ f_\beta\in A\cap H\subseteq A\cap \overline{B}=B.$$
In other words, $B\cap H$ is non-trivial and hence $B$ is $G_\nu$-essential in $\overline{B}$.
\end{proof}

Now we let $G=A^*$; it is exactly the group $F(X)$ with the topology generated by $A$.
We will see that $G$ is the desired group.
First, if $K$ is a non-trivial quotient group of $G$, then $K^*$ is a proper closed subgroup of $G^*=A$.
According to Claim 1, one has $w(K^*)=\mathfrak{c}\geq \kappa$.
Proposition \ref{Prop:wei} then implies that $|K|=w(K^*)\geq\kappa$.
Moreover, according to Corollary \ref{Coro:den} and Claim 2, the density  of $K$ is at least $\kappa$.
\vspace{0.2cm}

\noindent{\textbf {Claim 3.}} $d(G)\leq\kappa$.

\begin{proof}[Proof of Claim 3.]
If $\kappa=\mathfrak{c}$, then it follows from the equality
$$d(G)\leq |G|=F(X)=\mathfrak{c}.$$
Now assume that $\kappa<\mathfrak{c}$, i.e., $\kappa^+\leq \mathfrak{c}=|G|$.
Then we can apply Corollary \ref{Coro:den}, by replacing $\kappa$ by $\kappa^+$ there.
Let $Y$ be a subset of $X$ of size $\kappa$, of course $<\kappa^+$.
As we have seen in the proof of Claim 1, the kernel of each non-zero $\chi\in A$ has cardinality less than $\kappa$.
So $Y\not\subseteq \ker\chi$.
According to Corollary \ref{Coro:den}, $d(G)<\kappa^+$.
In other words, $d(G)\leq \kappa$.\end{proof}

By Claim 3, $d(K)\leq d(G)\leq \kappa$. In summary one has $d(K)=\kappa$.

Let us check that $G$ is a zero-dimensional dense subgroup of $\mathbb{T}^{\mathfrak{c}}$.
It is clear that $G=A^*$ is a dense subgroup of $\widehat{A_d}$, where $A_d$ is the abstract group $A$ with the discrete topology.
Since $A$ is free and of size $\mathfrak{c}$, $\widehat{A_d}\cong \T^\mathfrak{c}$.
Moreover, every element in $A$ maps $G$ into a dense, torsion-free (hence zero-dimensional) subgroup of the torus $\T$.
So $G$ is a subgroup of a product of zero-dimensional groups, and therefore, is also zero-dimensional.\end{proof}

The above example shows:
\begin{theorem}$\ln \mathfrak{c}^+\leq\mathfrak{m}$.
In particular, if $2^{<\mathfrak{c}}=\mathfrak{c}$, then $\mathfrak{m}=\mathfrak{c}$.\end{theorem}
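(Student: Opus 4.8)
The plan is to use the group $G$ constructed in Theorem~\ref{Th:1} as a ``worst-case'' witness, together with the elementary fact that $2^{<\mathfrak c}=\mathfrak c$ forces $\kappa=\mathfrak c$ (recall $\kappa=\ln\mathfrak c^+$, the least cardinal with $2^\kappa>\mathfrak c$). First I would note that $\kappa>\omega$, since $2^\omega=\mathfrak c$; hence every non-trivial quotient of $G$, having density $\kappa$, is uncountable, so in particular $G$ admits an infinite quotient, and every infinite quotient of $G$ is non-trivial and therefore has density exactly $\kappa$ by Theorem~\ref{Th:1}. Applying the definition of $\mathfrak m$ to $G$ we obtain an infinite quotient $K$ of $G$ with $d(K)\le\mathfrak m$, and since $d(K)=\kappa$ this yields $\kappa\le\mathfrak m$, i.e.\ $\ln\mathfrak c^+\le\mathfrak m$.

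For the ``in particular'' clause I would show that $2^{<\mathfrak c}=\mathfrak c$ implies $\kappa=\mathfrak c$. The set of cardinals $\lambda$ satisfying $2^\lambda>\mathfrak c$ is upward closed by monotonicity of exponentiation, and $\kappa$ is its least element. If $\mu<\mathfrak c$ is infinite, then $2^{<\mathfrak c}=\mathfrak c$ gives $2^\mu=\mathfrak c$, so $\mu$ does not lie in that set, whence $\mu<\kappa$; as $\mu<\mathfrak c$ was arbitrary, $\kappa\ge\mathfrak c$. On the other hand $2^{\mathfrak c}>\mathfrak c$ by Cantor's theorem, so $\mathfrak c$ itself lies in the set and $\kappa\le\mathfrak c$; therefore $\kappa=\mathfrak c$. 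Combining this with $\mathfrak m\ge\kappa$ from the previous paragraph and with the upper bound $\mathfrak m\le\mathfrak c$ already recorded in the Introduction, we conclude $\mathfrak m=\mathfrak c$.

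I do not expect a genuine obstacle at this stage: all the substance has already been packed into Theorem~\ref{Th:1}, and what remains is bookkeeping plus the cardinal arithmetic just described. The only point that calls for a moment's care is checking that the quotient supplied by the definition of $\mathfrak m$ is indeed one of those governed by Theorem~\ref{Th:1}; this is immediate because, for quotients of $G$, being infinite is stronger than being non-trivial (by the uncountability of $\kappa$), so Theorem~\ref{Th:1} pins down the density of \emph{every} infinite quotient of $G$ to be exactly $\kappa$.
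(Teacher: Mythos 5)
Your proposal is correct and takes essentially the same route as the paper: the paper's entire proof is to invoke Theorem~\ref{Th:1} for the inequality $\ln\mathfrak{c}^{+}\leq\mathfrak{m}$ and then observe that $2^{<\mathfrak{c}}=\mathfrak{c}$ forces $\ln\mathfrak{c}^{+}=\mathfrak{c}$, giving $\mathfrak{c}\leq\mathfrak{m}\leq\mathfrak{c}$. You simply make explicit the bookkeeping the paper leaves implicit (that $\kappa>\omega$, that infinite quotients are non-trivial, and the cardinal arithmetic showing $\kappa=\mathfrak{c}$), all of which is sound.
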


\begin{proof} The first assertion follows from Theorem \ref{Th:1}.
If $2^{<\mathfrak{c}}=\mathfrak{c}$, then $\ln \mathfrak{c}^+=\mathfrak{c}$.
Hence $\mathfrak{c}\leq \mathfrak{m}\leq \mathfrak{c}$.
\end{proof}

Let $\nu$ be an infinite cardinal. A family $\Omega$ of subsets of $\nu$ is called a {\em almost disjoint} (briefly, {\em AD}) provided: (i) $|S|=\nu$ for all $S\in \Omega$; and (ii) $|S\cap T|<\nu$ for any pair of distinct elements $S, T$ in $\Omega$.
By Zorn's Lemma, every AD family is contained in a maximal one.
It is an old result of Sierpinski that an AD family of $\nu$ with cardinality $\leq \nu$ is not maximal; thus $\nu$ always admits an AD family of cardinality $\nu^+$.



\begin{lemma}\label{Le:ad}\cite[Theorem 1.3]{Kun} If $2^{<\nu}=\nu$, then $\nu$ admits an AD family of size $2^\nu$, the maximum possible number.\end{lemma}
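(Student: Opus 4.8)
The plan is to realize the required family as the set of branches through the complete binary tree of height $\nu$. Concretely, I would put $T={}^{<\nu}2=\bigcup_{\alpha<\nu}{}^{\alpha}2$, the set of all $0$--$1$ sequences whose length is some ordinal $\alpha<\nu$. The first step is the bookkeeping where the hypothesis enters: one checks $|T|=\sum_{\alpha<\nu}2^{|\alpha|}\le \nu\cdot 2^{<\nu}=\nu$, and $|T|\ge\nu$ trivially (e.g.\ the constant sequences of each length are distinct), so $|T|=\nu$. Fixing a bijection $T\to\nu$, it then suffices to exhibit an AD family of size $2^\nu$ consisting of subsets of $T$.

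Next I would produce that family by branches. For each function $b\in{}^{\nu}2$ put
$$A_b=\{\,b\res\alpha:\alpha<\nu\,\}\subseteq T,$$
the set of all proper initial segments of $b$. Since $\alpha\mapsto b\res\alpha$ is injective on the ordinals below $\nu$, each $A_b$ has size exactly $\nu$, giving condition (i) of an AD family. For (ii): if $b\ne c$, let $\alpha_0<\nu$ be least with $b(\alpha_0)\ne c(\alpha_0)$; then $b\res\alpha=c\res\alpha$ holds precisely for $\alpha\le\alpha_0$, so $A_b\cap A_c=\{\,b\res\alpha:\alpha\le\alpha_0\,\}$ has cardinality $|\alpha_0+1|<\nu$ because $\alpha_0<\nu$ and $\nu$ is an infinite cardinal. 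Finally $b\mapsto A_b$ is injective (each branch is recovered as $\bigcup A_b$), so $\Omega=\{A_b:b\in{}^{\nu}2\}$ has cardinality $|{}^{\nu}2|=2^\nu$. Transporting along the bijection $T\to\nu$ yields an AD family on $\nu$ of size $2^\nu$.

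For the ``maximum possible'' clause I would just note the trivial upper bound: any AD family on $\nu$ is a subfamily of $\mathcal{P}(\nu)$, hence has size $\le 2^\nu$; combined with the construction this makes $2^\nu$ optimal. I do not expect a real obstacle here — the argument is the standard tree construction — and the only delicate point is the cardinality computation $|T|=\nu$ in the first step, which is exactly where $2^{<\nu}=\nu$ is used; without that hypothesis the same construction still works but produces an AD family on a ground set of size $2^{<\nu}$ rather than on $\nu$ itself.
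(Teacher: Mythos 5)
Your argument is correct and is exactly the standard tree-branch construction that underlies the cited result in Kunen (the paper itself gives no proof, only the reference); the cardinality computation $|{}^{<\nu}2|=\nu$ is indeed the one place the hypothesis $2^{<\nu}=\nu$ is needed, and your verification of the almost-disjointness of the branch sets is complete. Nothing to add.
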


\begin{proposition}\label{Prop:2.3}Under the assumption of $2^{<\mathfrak{c}}=\mathfrak{c}$, the compact group $\mathbb{T}^{\mathfrak{c}}$ has a free dense subgroup $G$ such that $|G|=2^{\mathfrak{c}}$ and every non-trivial closed subgroup of $G$ has the same cardinality.\end{proposition}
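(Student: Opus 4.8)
The plan is to mimic the construction in Theorem~\ref{Th:1}, but replacing the index set $[X]^{<\kappa}$ by an almost disjoint family on $\mathfrak c$. Concretely, I would start with a set $X=\{x_\alpha:\alpha<\mathfrak c\}$ and, using Lemma~\ref{Le:ad} (which applies because $2^{<\mathfrak c}=\mathfrak c$), fix an AD family $\Omega=\{S_\beta:\beta<2^{\mathfrak c}\}$ of subsets of $\mathfrak c$, each of size $\mathfrak c$ and with pairwise intersections of size $<\mathfrak c$. For each $\beta<2^{\mathfrak c}$ let $Y_\beta=\{x_\alpha:\alpha\in S_\beta\}$, let $F(X)$ and $F(Y_\beta)$ be the free abelian groups on these bases, and as before fix homomorphisms $f_\beta:F(X)\to Q_\beta\hookrightarrow\mathbb Q^{(\mathfrak c)}\subseteq\mathbb T$ with $\ker f_\beta=F(X\setminus Y_\beta)$ (i.e.\ $f_\beta$ is injective on $F(Y_\beta)$ and kills the complementary summand), where $\mathbb Q^{(\mathfrak c)}=\bigoplus_{\beta<2^{\mathfrak c}}Q_\beta$ with each $Q_\beta$ a copy of $\mathbb Q^{(\mathfrak c)}$; note $|\,\mathbb Q^{(\mathfrak c)}\,|^{2^{\mathfrak c}}=2^{\mathfrak c}$ so there is room for this many copies inside a single $\mathbb Q^{(\mathfrak c)}\subseteq\mathbb T$. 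The $f_\beta$ form an independent family in $\widehat{F(X)_d}$ (independence follows from the directness of the sum of the images), so they generate a free subgroup $A\le\widehat{F(X)_d}$ with $|A|=2^{\mathfrak c}$; since $A$ separates points of $F(X)$, Proposition~\ref{Prop:May}(i) gives that $A$ is dense in $\widehat{F(X)_d}\cong\mathbb T^{\mathfrak c}$. Set $G=A^*$, the group $F(X)$ topologized by $A$; by Comfort--Ross duality and Proposition~\ref{Prop:wei}, $G$ is a free dense subgroup of $\mathbb T^{\mathfrak c}$ with $|G|=w(G)=|G^*|=|A|=2^{\mathfrak c}$.

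Next I would analyze the nontrivial closed subgroups of $G$. Every nontrivial closed subgroup of $G$ is of the form $K^*$ where $K$ is a nontrivial (Hausdorff) quotient of $G^*$—wait, more directly: a closed subgroup $H\le G$ has $G/H$ precompact with $(G/H)^*\cong H^\perp$ a closed subgroup of $G^*=A$ by Lemma~\ref{Le:1} and the Corollary after Proposition~\ref{Prop:exact}, and by Proposition~\ref{Prop:wei} $|H|=w(H^\perp)$; so it suffices to show every nontrivial closed subgroup $C$ of $A$ has $w(C)=2^{\mathfrak c}$. Pick $0\ne g\in C$ and write $g=\sum_{i=1}^n k_i f_{\beta_i}$ with $k_i\ne 0$; as in Claim~1 of Theorem~\ref{Th:1} the kernel of $g$ in $F(X)$ equals $F(X\setminus Y_{\beta_1})\cap\cdots\cap F(X\setminus Y_{\beta_n})$, which is the free group on $\{x_\alpha:\alpha\notin S_{\beta_1}\cup\cdots\cup S_{\beta_n}\}$. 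The key point is to show this complement is still large, so that $\overline{\langle g\rangle}$ (and hence $\overline C$) has small weight, forcing $w(C)=2^{\mathfrak c}$ via $w(A)=w(C)\,w(A/\overline C\cap A)$ just as in Claim~1. For this I would use the AD property: each $S_{\beta_i}$ has size $\mathfrak c$, but I actually want the complement of their union to have size $\mathfrak c$ as well, equivalently that $S_{\beta_1}\cup\cdots\cup S_{\beta_n}\ne\mathfrak c$. This need not hold for a completely arbitrary AD family, so I would first refine $\Omega$ (or build it directly) so that each $S_\beta$ has size $\mathfrak c$ \emph{and} co-size $\mathfrak c$ in $\mathfrak c$; then a finite union still omits $\mathfrak c$ points. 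Then $|\ker g|\le\mathfrak c$, so $w(\widehat{\ker g}_d)\le 2^{\mathfrak c}$, hmm—actually $w(\widehat{N_d})=|N|\le\mathfrak c<2^{\mathfrak c}$ when $N$ is discrete of size $\le\mathfrak c$, giving $w(\widehat{F(X)_d}/\overline{\langle g\rangle})=|\ker g|\le\mathfrak c<2^{\mathfrak c}=w(A)$, whence $w(A/(C))<2^{\mathfrak c}$ too and $w(C)=2^{\mathfrak c}$ by the weight formula. By $|C|=w(C)$ (valid for infinite precompact groups) we conclude $|C|=2^{\mathfrak c}$ for every nontrivial closed subgroup, which transported back through duality gives $|H|=2^{\mathfrak c}$ for every nontrivial closed subgroup $H$ of $G$.

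The main obstacle I anticipate is making the cardinality bookkeeping inside $\mathbb T$ work simultaneously with the AD structure: I need $2^{\mathfrak c}$ pairwise "almost independent" homomorphisms $f_\beta$ whose images sum directly on any finite subcollection (so that $\langle f_\beta:\beta\rangle$ is free of rank $2^{\mathfrak c}$ and one can compute kernels of arbitrary elements), while each individual $f_\beta$ has kernel exactly $F(X\setminus Y_\beta)$ with $|X\setminus Y_\beta|=\mathfrak c$; the tension is that a \emph{maximal} AD family forces $|S_\beta|=\mathfrak c$ but gives no control on co-size, so one must check that a co-size-$\mathfrak c$ AD family of size $2^{\mathfrak c}$ still exists under $2^{<\mathfrak c}=\mathfrak c$ (it does—e.g.\ split $\mathfrak c$ into two pieces of size $\mathfrak c$ and apply Lemma~\ref{Le:ad} inside one piece, or simply note that the family produced by Lemma~\ref{Le:ad} can be arranged this way). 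Once that combinatorial lemma is in hand, the rest is a routine re-run of the argument of Theorem~\ref{Th:1} with "$<\kappa$" systematically replaced by "$\le\mathfrak c$" (equivalently "$<\mathfrak c^+$") and "$\kappa=\mathfrak c$" replaced by "$2^{\mathfrak c}$", using Propositions~\ref{Prop:wei} and \ref{Prop:May}, Lemma~\ref{Le:1}, and Corollary~\ref{Coro:den}.
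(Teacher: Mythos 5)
Your plan founders at its very first step: you cannot split $\mathbb{Q}^{(\mathfrak{c})}$ (or any subgroup of $\mathbb{T}$) as $\bigoplus_{\beta<2^{\mathfrak{c}}}Q_\beta$ with each $Q_\beta$ non-trivial, because a $\mathbb{Q}$-vector space of dimension $\mathfrak{c}$ cannot be a direct sum of $2^{\mathfrak{c}}$ non-zero subspaces (picking one non-zero vector from each summand gives a linearly independent set of size $2^{\mathfrak{c}}>\mathfrak{c}$); the computation $|\mathbb{Q}^{(\mathfrak{c})}|^{2^{\mathfrak{c}}}=2^{\mathfrak{c}}$ counts functions, not independent summands, so it does not create the room you need. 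This obstruction is precisely what the paper's proof is built to circumvent, and it does so by a genuinely different device: every $f_\alpha$ is taken \emph{injective} on all of $K=\mathbb{Z}^{(\mathfrak{c})}$, and the almost disjoint family is placed on the \emph{codomain} side, as $2^{\mathfrak{c}}$ almost disjoint subsets $Y_\alpha$ of a single Hamel basis of $\mathbb{Q}^{(\mathfrak{c})}$, with $f_\alpha$ carrying the free basis $X$ bijectively onto $Y_\alpha$. The images of distinct $f_\alpha$ then overlap, but only in a subspace of dimension $<\mathfrak{c}$, and the linear-algebra Claim in the paper shows that $\ker\bigl(\sum_k m_kf_{\alpha_k}\bigr)$ lies in a subgroup generated by a set of size $<\mathfrak{c}$. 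That \emph{strict} inequality is indispensable: it is exactly where $2^{<\mathfrak{c}}=\mathfrak{c}$ enters, via $|\widehat{N}|\le 2^{|N|}\le\mathfrak{c}<2^{\mathfrak{c}}$, forcing $|H|=2^{\mathfrak{c}}$.

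Even granting your construction, it would fail the conclusion. With $\ker f_\beta=F(X\setminus Y_\beta)$ of rank $\mathfrak{c}$ and directly-summing images, one checks that $\overline{\langle f_\beta\rangle}\cap A=\langle f_\beta\rangle$, so $A$ would have a \emph{countable} non-trivial closed subgroup --- the opposite of what is required --- and $\widehat{F(X)}/\overline{\langle f_\beta\rangle}\cong\widehat{F(X\setminus Y_\beta)}$ has cardinality $2^{\mathfrak{c}}$, so no lower bound on $|C|$ follows; your co-size refinement makes the kernels large, which is the wrong direction. There are two further slips. First, $w(A)=\mathfrak{c}$, not $2^{\mathfrak{c}}$, since $A$ is dense in $\mathbb{T}^{\mathfrak{c}}$; hence the weight formula $w(A)=w(C)\,w(A/C)$ can never yield $w(C)=2^{\mathfrak{c}}$, and in any case the statement concerns cardinality, not weight, so the paper argues via $|A|=|C|\cdot|A/C|$ with $|A/C|\le\mathfrak{c}$. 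Second, your final dualization $G=A^*$ produces the group $F(X)$, of cardinality $\mathfrak{c}$, sitting densely in $\mathbb{T}^{2^{\mathfrak{c}}}$ --- that is the group of the \emph{next} theorem in the paper. The group asserted by Proposition~\ref{Prop:2.3} is $A$ itself, the subgroup of $\widehat{K}\cong\mathbb{T}^{\mathfrak{c}}$ generated by the characters $f_\alpha$; no dualization is needed, and the chain $|G|=w(G)=|G^*|$ is false for it (its weight is $\mathfrak{c}$ while its cardinality is $2^{\mathfrak{c}}$).
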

\begin{proof}The Pontryagin dual group of $\mathbb{T}^{\mathfrak{c}}$ is the discrete restricted product $\mathbb{Z}^{(\mathfrak{c})}$.
Let us denote the latter group by $K$ with a free basis $X=\{x_i:i\in \mathfrak{c}\}$.
Let also $Y=\{y_j:i\in \mathfrak{c}\}$ be a Hamel basis of the linear space $\mathbb{Q}^{(\mathfrak{c})}$ over $\mathbb{Q}$.
According to Lemma \ref{Le:ad}, there is an AD family $\eta:=\{Y_\alpha:\alpha<2^\mathfrak c\}$ of subsets of $Y$.
Now  for each $\alpha$, one can fix an injective homomorphism $f_\alpha$ of $K$ into $\mathbb{Q}^{(\mathfrak{c})}$ extending a bijection of $X$ onto $Y_\alpha$.
Since $\mathbb{Q}^{(\mathfrak{c})}$ is a subgroup of $\mathbb{T}$, each $f_\alpha$ is a character of $K$.
Now let $G$ be the subgroup of $\widehat{K}$ generated by all the $f_\alpha$.
We shall show that $G$ is free and the closure of every subgroup of $G$ generated by one non-zero element has cardinality $2^\mathfrak c$.
Each $g\in G$ can be represented as $\Sigma_{k=1}^nm_kf_{\alpha_k}$, where $m_k$ are integers.
We assume that $m_k=0$ for no $1\leq k\leq n$.
Let $$C=\bigcup_{1\leq k<k'\leq n}f^{-1}_{\alpha_k}(Y_{\alpha_k}\cap Y_{\alpha_{k'}})\subseteq X.$$
Since $\eta$ is almost disjoint, $|C|<\mathfrak c$.
\vspace{0.2cm}

\noindent{\textbf {Claim.}} $\ker g\subseteq \hull{C}$.

\begin{proof}[Proof of Claim.]
Let $x=y+z\in \ker g$, where $y\in \hull{C}$ and $z\in \hull{X\setminus C}$.
We have to prove that $z=0$.
For $1\leq k\leq n$, let $L_k$ be the linear subspace of $\Q^{(\mathfrak{c})}$ generated by $f_{\alpha_k}(C)$ and $M_k$ be the linear subspace generated by $Y_{\alpha_k}\setminus f_{\alpha_k}(C)$.
Note that for $1\leq k\leq n$, $$Y_{\alpha_k}\cap \bigcup_{1\leq k'\leq n, k'\neq k}Y_{\alpha_{k'}}\subseteq f_{\alpha_k}(C).$$
Hence $$(Y_k\setminus f_{\alpha_k}(C))\cap \bigcup_{1\leq k'\leq n, k'\neq k}Y_{\alpha_{k'}}=\varnothing.$$
In particular, the sum
$$M=M_1+M_2+...+M_n$$
is direct.

Let $$L=L_1+L_2+...+L_n.$$
Then $L$ is generated by $\bigcup_{1\leq k\leq n}f_{\alpha_k}(C)=\bigcup_{1\leq k<k'\leq n}(Y_{\alpha_k}\cap Y_{\alpha_{k'}})$.
So it is easy to check that the sum $L+M$ is also direct.
Note that $g(y)\in L$ and $g(z)\in M$.
So $g(x)=0$ implies that $g(y)=g(z)=0$.
Moreover, it is easy to see that $g(z)=\sum_{k=1}^nm_kf_{\alpha_k}(z)$ and $f_{\alpha_k}(z)\in M_k$.
Since $M$ is the direct sum of those $M_k$, one obtain that
$$m_1f_{\alpha_1}(z)=m_2f_{\alpha_2}(z)=...=m_nf_{\alpha_n}(z)=0.$$
By our assumption, no $m_k$ equals $0$.
Hence each $m_kf_{\alpha_k}$ is injective.
Therefore, $z=0$.
\end{proof}
The Claim particularly implies that $g\neq 0$.
So $G$ is free.
Let  $H$ be the closure of $\hull{g}$ in $G$ and $\overline{H}$  the closure of $H$ in $\widehat{K}$.
Now apply Lemma \ref{Le:1}, we have that $\widehat{K}/\overline{H}\cong \widehat{N}$, where $N=\ker g$.
Note that $w(\widehat{N})=|N|\leq |C|+\omega<\mathfrak{c}$.
By our assumption of $2^{<\mathfrak{c}}=\mathfrak{c}$, one has $|\widehat{N}|\leq 2^{w(\widehat{N})}=\mathfrak{c}$.
Moreover, $G/H$ is a dense subgroup of $\widehat{K}/\overline{H}$. So we have
$$\mathfrak{c}\geq |\widehat{K}/\overline{H}|\geq|G/H|.$$
The above inequality shows that $|H|=2^{\mathfrak{c}}$ since $|G|=2^{\mathfrak{c}}$.
\end{proof}

\begin{theorem} Under the assumption of $2^{<\mathfrak{c}}=\mathfrak{c}$, there exists a dense, free, zero-dimensional subgroup of $\T^{2^{\mathfrak{c}}}$ of size $\mathfrak{c}$ such that $H$ admits no non-trivial quotient group of weight $<2^{\mathfrak{c}}$. \end{theorem}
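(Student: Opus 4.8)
The plan is to take for $H$ the Comfort--Ross dual of the group produced by Proposition \ref{Prop:2.3}. Recall that that group --- call it $G$ --- is free on a set $\{f_\alpha:\alpha<2^{\mathfrak c}\}$ of characters of $K=\Z^{(\mathfrak c)}=\widehat{\T^{\mathfrak c}}$, is dense in $\T^{\mathfrak c}=\widehat K$, has $|G|=2^{\mathfrak c}$, and has the property that every non-trivial closed subgroup of $G$ has cardinality $2^{\mathfrak c}$; moreover each $f_\alpha$ takes its values in the torsion-free subgroup $\Q^{(\mathfrak c)}$ of $\T$. I would put $H=G^*$, where $G$ carries the topology inherited from $\T^{\mathfrak c}$, and then verify, in turn, that $H$ is a dense subgroup of $\T^{2^{\mathfrak c}}$, that $H$ is free with $|H|=\mathfrak c$, that $H$ has no non-trivial quotient group of weight $<2^{\mathfrak c}$, and that $H$ is zero-dimensional.

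The formal properties come straight out of the Comfort--Ross machinery. Since $G$ is free on $2^{\mathfrak c}$ generators, $G_d\cong\Z^{(2^{\mathfrak c})}$ and $\widehat{G_d}\cong\T^{2^{\mathfrak c}}$, so by Theorem \ref{Th:CRdual}(a) the group $H=G^*$ is a dense subgroup of $\T^{2^{\mathfrak c}}$. Because $G$ is dense in the compact group $\T^{\mathfrak c}$, restriction induces an isomorphism of abstract groups $\widehat{\T^{\mathfrak c}}\to G^*$: every continuous character of $G$ extends continuously over the completion $\T^{\mathfrak c}$, and two characters of $\T^{\mathfrak c}$ that agree on the dense subgroup $G$ coincide. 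Hence $H=G^*\cong\widehat{\T^{\mathfrak c}}=\Z^{(\mathfrak c)}$ as abstract groups, so $H$ is free and $|H|=\mathfrak c$ (this also matches $|G^*|=w(G)=\mathfrak c$ coming from Proposition \ref{Prop:wei} and $w(G)=w(\T^{\mathfrak c})=\mathfrak c$).

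The quotient property is immediate from duality. If $K$ is a non-trivial quotient group of $H$, say $K=H/N$ with $N$ a proper closed subgroup, then the quotient map $H\to K$ is open and surjective, so by Proposition \ref{Prop:exact} its dual is a topological group embedding of $K^*$ onto a closed subgroup of $H^*=(G^*)^*\cong G$; this closed subgroup is non-trivial, since a non-trivial precompact abelian group carries a non-trivial continuous character. By Proposition \ref{Prop:2.3} it therefore has cardinality $2^{\mathfrak c}$, and by Proposition \ref{Prop:wei}, $w(K)=|K^*|=2^{\mathfrak c}$. Thus $H$ admits no non-trivial quotient group of weight $<2^{\mathfrak c}$.

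It remains to see that $H$ is zero-dimensional, and this is the one place where one must look inside the construction of Proposition \ref{Prop:2.3}, so I expect it to be the main (if minor) obstacle. Identifying $\widehat{G_d}$ with $\T^{\{f_\alpha:\alpha<2^{\mathfrak c}\}}$, the $\alpha$-th coordinate of $\chi\in H=G^*$ is the value $\chi(f_\alpha)\in\T$. Since $\chi$, as a continuous character of $G\subseteq\T^{\mathfrak c}$, is the restriction of some $(n_i)_{i<\mathfrak c}\in\Z^{(\mathfrak c)}$, and since every coordinate of $f_\alpha\in\T^{\mathfrak c}$ lies in $\Q^{(\mathfrak c)}$, we get $\chi(f_\alpha)=\sum_i n_i(f_\alpha)_i\in\Q^{(\mathfrak c)}$. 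Hence each coordinate projection maps $H$ into a torsion-free --- and therefore zero-dimensional --- subgroup of $\T$, exactly as at the end of the proof of Theorem \ref{Th:1}; being a subgroup of a product of zero-dimensional groups, $H$ is itself zero-dimensional. Everything else is the routine cardinal arithmetic and the standard duality bookkeeping already set up in Section \ref{Sec1}.
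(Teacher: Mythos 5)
Your proposal is correct and follows essentially the same route as the paper: take $H=G^*$ for the group $G$ of Proposition \ref{Prop:2.3}, read off density in $\T^{2^{\mathfrak c}}$, freeness and size $\mathfrak c$ from the Comfort--Ross duality, get the weight bound on quotients from Proposition \ref{Prop:wei} applied to the non-trivial closed subgroup $K^*$ of $G$, and check zero-dimensionality by observing that each $f_\alpha$ maps $H$ into $\Q^{(\mathfrak c)}\subseteq\T$. The only difference is that you spell out the duality bookkeeping (via Proposition \ref{Prop:exact}) that the paper leaves implicit.
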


\begin{proof}Let $G$ be constructed in Proposition \ref{Prop:2.3} and $H=G^*$.
Then $H$ is a dense subgroup of $\T^{2^\mathfrak{c}}$ since $G$ is a free abelian group with cardinality $2^\mathfrak{c}$.
On the other hand, as $G$ is dense in $\T^\mathfrak{c}$, $H$ is free and of size $\mathfrak{c}$ (indeed, $H$ is the group $K$ in Proposition \ref{Prop:2.3} with the topology generated by characters in $G$).
The zero-dimensionality of $H$ follows from the fact that every $\chi\in G$ maps $H$ into a zero-dimensional subgroup of $\T$.
The rest part is immediately from Proposition \ref{Prop:wei}.
\end{proof}
Now we have:
\begin{theorem}If $2^{<\mathfrak{c}}=\mathfrak{c}$, then $\mathfrak{n}=2^\mathfrak{c}.$\end{theorem}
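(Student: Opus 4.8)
The plan is to obtain this as an immediate consequence of the theorem just proved, combined with the upper bound $\mathfrak{n}\le 2^{\mathfrak{c}}$ already recorded in the introduction. Recall from there that every infinite precompact abelian group admits an infinite quotient group which, being a dense subgroup of an infinite metrizable compact group, has density $\le\mathfrak{c}$ and hence weight $\le 2^{\mathfrak{c}}$; this gives $\mathfrak{n}\le 2^{\mathfrak{c}}$ with no set-theoretic hypothesis. So, assuming $2^{<\mathfrak{c}}=\mathfrak{c}$, everything reduces to proving the reverse inequality $\mathfrak{n}\ge 2^{\mathfrak{c}}$.

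For the lower bound I would use the group $H$ produced by the theorem preceding this one: under $2^{<\mathfrak{c}}=\mathfrak{c}$ it is a dense, free, zero-dimensional subgroup of $\T^{2^{\mathfrak{c}}}$ with $|H|=\mathfrak{c}$ and with the property that no non-trivial quotient group of $H$ has weight $<2^{\mathfrak{c}}$. (Concretely, by the Comfort-Ross duality the non-trivial quotients of $H$ correspond to the non-trivial closed subgroups of $H^{*}$, which by Proposition \ref{Prop:2.3} all have cardinality $2^{\mathfrak{c}}$, so by Proposition \ref{Prop:wei} every such quotient has weight exactly $2^{\mathfrak{c}}$.) Since $|H|=\mathfrak{c}$, $H$ is itself an infinite precompact abelian group.

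Now suppose towards a contradiction that some cardinal $\lambda<2^{\mathfrak{c}}$ had the property defining $\mathfrak{n}$, namely that every infinite precompact abelian group admits an infinite quotient group of weight $\le\lambda$. Applying this to $H$ would produce an infinite, hence non-trivial, quotient group $K$ of $H$ with $w(K)\le\lambda<2^{\mathfrak{c}}$, contradicting the characteristic property of $H$. Hence no cardinal strictly below $2^{\mathfrak{c}}$ works, so $\mathfrak{n}\ge 2^{\mathfrak{c}}$, and together with the upper bound we conclude $\mathfrak{n}=2^{\mathfrak{c}}$.

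Strictly speaking there is no real obstacle here: all the substantive work sits in the construction behind the preceding theorem (and in Proposition \ref{Prop:2.3}). The one point that warrants a moment's care is the trivial remark that an infinite quotient is in particular non-trivial; this is precisely what allows the conclusion of the preceding theorem, phrased for \emph{non-trivial} quotients, to contradict the definition of $\mathfrak{n}$, which is phrased for \emph{infinite} quotients.
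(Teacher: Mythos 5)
Your proof is correct and is essentially the argument the paper intends: the theorem is stated there without proof, as an immediate consequence of the preceding theorem (the group $H$) together with the upper bound $\mathfrak{n}\le 2^{\mathfrak{c}}$ recorded in the introduction. Your extra remarks --- rederiving the weight of the quotients via Proposition \ref{Prop:2.3} and Proposition \ref{Prop:wei}, and observing that an infinite quotient is in particular non-trivial --- merely make explicit what the paper leaves implicit.
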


\section{The Sets QW(G) and QD(G)}\label{Sec3}

Now let us consider the problem what can the sets $QW(G)$ and $QD(G)$ be for a precompact abelian group? More precisely, does every set $S$ of infinite cardinals can be $QW(G)$ or $QD(G)$ for some precompact abelian group?

Let us first consider the intervals.
As we noticed, for infinite compact abelian groups $G$, $QW(G)$ always equals the interval $[\omega, w(G)]$.
If we discuss the parallel question for $QD(G)$, we usually do not have the same result.
Since compact groups $K$ satisfy $d(K)=\ln w(K)$, so if there exists an infinite cardinal $\tau$ such that $\tau\neq \ln \kappa$ for any $\kappa$ (for example, $(2^{<\mathfrak{c}}=\mathfrak{c})+\neg \mathbf{CH}$ implies every $\omega<\tau<\mathfrak{c}$ is such a cardinal), then $[\omega, d(G)]\neq QD(G)$ if $d(G)\geq \tau$.
However, when $G$ is not restricted to be compact, $QD(G)$ may be an interval.

\begin{proposition}\label{Prop:3.1}Let $G$ be an uncountable abelian group carrying the Bohr topology. Then $QD(G)=[\omega, |G|]$.\end{proposition}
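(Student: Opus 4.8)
The plan is to analyze both bounds $QD(G) \subseteq [\omega, |G|]$ and $[\omega, |G|] \subseteq QD(G)$ separately, using the Comfort-Ross duality together with the two special features of the Bohr topology recalled in the excerpt: every subgroup is closed and carries its own Bohr topology, and every quotient carries the Bohr topology. First I would observe that for the Bohr topology one has $G^* = \widehat{G_d}$, so that $G$ is in fact a compact group's dense subgroup in the strongest possible way; more precisely, every proper quotient $K = G/N$ again carries the Bohr topology, hence $K^* = \widehat{K_d} = \widehat{(G/N)_d}$, and by Lemma \ref{Le:1}(ii) this is $N^\perp \subseteq \widehat{G_d}$. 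Thus proper infinite quotients of $G$ correspond exactly to proper subgroups $N^\perp$ of $\widehat{G_d}$ of infinite corank, i.e. to subgroups $N$ of $G$ with $G/N$ infinite. So the problem reduces to: for which infinite cardinals $\tau \le |G|$ does $G$ admit a subgroup $N$ with $|G/N|$ arbitrary but $d(G/N) = \tau$? Since $G/N$ carries the Bohr topology and is an abelian group of some cardinality $\lambda$, I would want to know $d(A)$ for $A$ an infinite abelian group with the Bohr topology.

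The key computation is therefore: \emph{if $A$ is an infinite abelian group with the Bohr topology, then $d(A) = \ln |A|$} (the least $\mu$ with $2^\mu \ge |A|$). The upper bound $d(A) \le \ln|A|$ follows because $A$ embeds (as an abstract, hence Bohr-topological, group) into a product of copies of $\T$ or $\Q/\Z$ of size $\le |A|$, equivalently $A^* = \widehat{A_d}$ has weight $|A|$ so by Proposition \ref{Prop:May}(iii) or directly by the Hewitt-Marczewski-Pondiczery-type argument $d(A) \le \ln|A|$; more cleanly, $A$ with the Bohr topology is a dense subgroup of the compact group $\widehat{A_d}$, and $d$ of a dense subset equals $d$ of the whole space, while $d(\widehat{A_d}) = \ln w(\widehat{A_d}) = \ln|A|$ by \cite[Corollary 5.2.7]{AT}. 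For the lower bound $d(A) \ge \ln|A|$: a dense subgroup $D \le A$ of size $\mu$ must separate the points of $A^* = \widehat{A_d}$ by Proposition \ref{Prop:May}(i), so the evaluation map $\widehat{A_d} \to \T^D$ is injective, forcing $|\widehat{A_d}| = 2^{|A|} \le 2^{|D|} = 2^\mu$, so $\mu \ge \ln 2^{|A|} \ge \ln|A|$ — wait, more carefully, injectivity into $\T^\mu$ gives $|A| \le |\widehat{\widehat{A_d}}| \le 2^\mu$, whence $\mu \ge \ln|A|$. Hence $d(G/N) = \ln|G/N|$ for every infinite proper quotient.

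Now I would assemble the two inclusions. For $QD(G) \subseteq [\omega, |G|]$: every infinite proper quotient $G/N$ satisfies $d(G/N) = \ln|G/N| \le |G/N| \le |G|$, and $d(G/N) \ge \omega$ since $G/N$ is infinite. For $[\omega, |G|] \subseteq QD(G)$: given $\omega \le \tau \le |G|$, I must produce $N \le G$ with $\ln|G/N| = \tau$. It suffices to find a quotient of cardinality exactly $2^\tau$ if $2^\tau \le |G|$, and of cardinality $\ge$ something forcing $\ln = \tau$ in the boundary case $2^\tau > |G|$; in the latter case $\tau$ is large, $\ln|G| \le \tau$, and actually I want a quotient $A$ with $\ln|A| = \tau$, for which it is enough to have $\max(|A|, \text{so that } \ln|A| = \tau)$ — concretely, take $A$ of cardinality $\lambda$ where $\lambda$ is chosen minimal with $2^\tau \le$ ... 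Since $G$ has the Bohr topology and is an uncountable abelian group, it is well known that $G$ (as an abstract group) surjects onto arbitrarily... actually $G$ need not surject onto large groups, so here is the real point: I should instead quotient \emph{down}. An abelian group of cardinality $|G|$ has quotients of every cardinality $\le |G|$ that is infinite: pick a maximal independent set, kill enough of it, then kill a subgroup to reach any prescribed cardinality $\lambda \le |G|$ with $\lambda \ge \omega$ — this uses only that infinite abelian groups have quotients of every smaller infinite cardinality, which follows from the structure (an infinite abelian group of cardinality $\lambda$ either has free rank $\lambda$, giving quotients $\bigoplus_{<\kappa}\Z$ of every size $\le\lambda$, or is torsion of cardinality $\lambda$, giving quotients of every size $\le\lambda$ via restricted products of its $p$-components). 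Then for the target density $\tau$, choose $\lambda = \lambda(\tau) \le |G|$ with $\ln\lambda = \tau$: take $\lambda = 2^\tau$ if $2^\tau \le |G|$, and otherwise (when $2^\tau > |G| \ge$, so $\tau \ge \ln|G|$, but also $\tau \le |G|$) take $\lambda = |G|$ itself and note $\ln|G| \le \tau \le |G|$ forces... this boundary case needs $\ln|G| = \tau$, i.e. $\tau$ in the interval $[\ln|G|, |G|]$ all collapse. Hmm — so the \textbf{main obstacle} is exactly this boundary analysis: for $\tau$ with $2^\tau > |G|$, every quotient of size $\le |G|$ has density $\le \ln|G| < \tau$, so such $\tau$ would \emph{not} lie in $QD(G)$ unless $\ln|G| = \tau$ is impossible to realize... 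This suggests the honest statement requires reconsidering — but since this is the author's proposition I will trust that the intended argument yields exactly $[\omega,|G|]$, presumably because for every $\tau \le |G|$ one has $\ln(2^\tau \wedge |G|)$-type considerations work out, or because $|G|$ uncountable and Bohr forces $d(G) = |G|$ via Proposition \ref{Prop:May}(i) applied to $G$ itself (a dense subgroup of $G$ of size $<|G|$ cannot separate points of $\widehat{G_d}$ since $2^{<|G|}$ can be $< 2^{|G|} = |\widehat{G_d}|$... not always). I expect the author's proof handles the lower-cardinality quotients explicitly and the subtlety about which $\tau$ between $\ln|G|$ and $|G|$ are attained is resolved by a direct construction of quotients whose density is computed via Corollary \ref{Coro:den} rather than via the $d = \ln|A|$ formula; the delicate step — and the one I would spend the most care on — is showing $[\omega,|G|]$ is \emph{all} of $QD(G)$ rather than just a subset, i.e. realizing every intermediate value, which I would attack by taking, for each infinite $\lambda \le |G|$, a quotient $G \to \bigoplus_{\lambda} \Z(p^\infty)$ or $G \to (\Z(p^\infty))^{(\lambda)}$ and reading off its Bohr density as $\ln\lambda$, then noting $\{\ln\lambda : \omega \le \lambda \le |G|\} = [\omega, \ln|G|]$, and finally handling $[\ln|G|,|G|]$ — if that interval is to be covered it must be because $d(G) = |G|$ itself and proper quotients reach everything below, which again reduces to the point-separation criterion of Corollary \ref{Coro:den}.
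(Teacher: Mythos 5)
Your proof breaks down at its central computation: the claim that an infinite abelian group $A$ with the Bohr topology satisfies $d(A)=\ln|A|$ is false, and the argument you give for the bound $d(A)\le\ln|A|$ rests on the incorrect assertion that the density of a dense subspace equals the density of the ambient space. For a dense subspace $D\subseteq X$ one only has $d(X)\le d(D)$; density does not pass down from a compact group to its dense subgroups — indeed, the existence of precompact groups whose density vastly exceeds that of their completion is the entire subject of Section 2 of this paper. The correct value is $d(A)=|A|$: as recalled immediately before the proposition, every subgroup of a group carrying the Bohr topology is closed (given $g\notin H$, a character of $A/H$ not vanishing at $g+H$ pulls back to a character of $A$ killing $H$ but not $g$, using divisibility of $\T$), so any dense subset generates a dense, hence improper, subgroup, and a generating set of an uncountable group has full cardinality. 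Once $d(A)=|A|$ replaces $\ln|A|$, the entire ``boundary analysis'' you wrestle with — which cardinals in $[\ln|G|,|G|]$ are attainable — evaporates; you yourself observed that your formula cannot cover that interval, and this is a genuine failure of the approach rather than a detail that can be deferred to the author.

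The paper's proof is exactly the two-line argument your formula obstructs: since all subgroups are closed, $d(G)=|G|$, and since every quotient again carries the Bohr topology, $d(G/H)=|G/H|$ for every subgroup $H$; a theorem of Scott then provides, for each infinite $\tau\le|G|$, a subgroup $H$ with $|G/H|=\tau$, so every $\tau\in[\omega,|G|]$ is realized. Your sketch of manufacturing quotients of each infinite cardinality $\le|G|$ from a maximal independent set is in the spirit of that last step (the paper simply cites Scott), but it cannot rescue the argument while the densities of those quotients are computed by the wrong formula.
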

\begin{proof}First we note that if an abelian group $G$ has the Bohr topology, then all its subgroups are closed.
In particular, the only dense subgroup of $G$ is $G$ itself.
So $d(G)=|G|$.
For any cardinal $\tau$ with $\omega\leq \tau\leq |G|$, there exists a non-trivial subgroup $H$ of $G$ with $|G/H|=\tau$, see \cite{Scott} for a much stronger result.
If this has been verified, then $d(G/H)=|G/H|=\tau$ since quotient group $G/H$ also has its Bohr topology.
\end{proof}
Let us now consider an extremely special case.
Recall that a topological group $G$ is called {\em topologically simple} if $G$ has no closed normal subgroups but $G$ and the trivial group.
Moreover, when $G$ is additionally assumed to be abelian, the notion is equivalent to that every non-trivial subgroup of $G$ is dense in $G$.
So if $G$ is a topologically simple precompact group, then $QD(G)=QW(G)=\varnothing$.

\subsection{Topologically simple precompact abelian groups}

The following lemma is obvious so we omit the proof.
\begin{lemma}\label{Le:simp}For every precompact abelian group $G$, $G$ is topologically simple if and only if $G^*$ is topologically simple.\end{lemma}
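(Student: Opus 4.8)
The plan is to unwind the definitions on both sides and use the Comfort-Ross duality machinery already developed, in particular the correspondence between closed subgroups of a precompact abelian group $G$ and quotient groups of $G^*$. Recall from Lemma \ref{Le:simp}'s setting that, for an abelian group, topological simplicity means: the only closed subgroups are $\mathbf{0}$ and the whole group; equivalently, every non-trivial subgroup is dense.

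First I would record the bridge supplied by the preceding results: by the Corollary following Proposition \ref{Prop:exact}, if $H$ is a closed subgroup of $G$ then $H^*$ is a quotient group of $G^*$, and conversely if $H$ is a quotient group of $G$ then $H^*$ is a closed subgroup of $G^*$; moreover this correspondence is inclusion-reversing and bijective (it is the restriction of the poset isomorphism in the Comfort-Ross Duality, Theorem \ref{Th:CRdual}, once one identifies $G$ with $(G^*)^*$ via the evaluation). In particular, $G$ has a proper non-trivial closed subgroup if and only if $G^*$ has a proper non-trivial quotient, which (again by the same correspondence applied to $G^*$ in place of $G$) happens if and only if $G^*$ has a proper non-trivial closed subgroup.

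Concretely, I would argue the contrapositive in each direction. Suppose $G$ is not topologically simple, so there is a closed subgroup $H$ with $\mathbf{0}\neq H\neq G$. Then $H^*$ is a quotient of $G^*$; it is non-trivial because $H$ is infinite (a non-trivial closed subgroup of a Hausdorff precompact group cannot be finite unless $G$ itself is — and if $G$ is finite it is not infinite, but one should note the trivial case: if $G$ is the trivial group the statement is vacuous, and if $G$ is finite non-trivial cyclic of prime order it is topologically simple, matching $G^*$; otherwise $H$ infinite gives $H^*$ infinite by Proposition \ref{Prop:wei}), and it is proper because the dual of a proper quotient is a proper closed subgroup, hence $\ker(G^*\to H^*)$ is a non-trivial proper closed subgroup of $G^*$. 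So $G^*$ is not topologically simple. The reverse implication follows by symmetry, applying the same reasoning to $G^*$ and using $(G^*)^*\cong G$.

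The only genuinely delicate point — and the reason the lemma is stated as "obvious" rather than proved — is keeping the bookkeeping straight that "non-trivial proper closed subgroup of $G$" corresponds under duality to "non-trivial proper closed subgroup of $G^*$" rather than getting an off-by-one confusion between subgroups and quotients; this is purely a matter of composing the two halves of the Corollary to Proposition \ref{Prop:exact} and is where I would be careful, but it is routine. One should also dispense with the degenerate finite cases at the outset (finite cyclic groups of prime order are self-dual and topologically simple; all other finite groups have a proper non-trivial subgroup, as do their duals), after which the infinite case is handled exactly as above. Hence $G$ is topologically simple iff $G^*$ is.
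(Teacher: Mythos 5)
Your overall route is the intended one: the paper omits the proof precisely because the statement falls out of the duality machinery just developed, namely the order-reversing bijection $H\mapsto H^\perp=\ker(G^*\to H^*)=(G/H)^*$ between closed subgroups of $G$ and closed subgroups of $G^*$ (Proposition \ref{Prop:exact} and its corollary, together with $(G^*)^*\cong G$), under which ``non-trivial proper'' corresponds to ``non-trivial proper''. Your contrapositive argument in each direction is correct in structure.

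One auxiliary assertion you make is false and should be excised: a non-trivial closed subgroup of an infinite Hausdorff precompact group can perfectly well be finite (e.g.\ $\Z(2)\subset\T$), so you cannot argue that $H^*$ is non-trivial ``because $H$ is infinite''. Fortunately the conclusion you need does not depend on this: for \emph{any} non-trivial precompact Hausdorff abelian group $H$, the continuous characters separate points, so $H^*\neq\mathbf{0}$; dually, since $H\neq G$ and $H$ is closed, $G/H$ is a non-trivial precompact Hausdorff group and hence admits a non-trivial character, so $H^\perp=(G/H)^*\neq\mathbf{0}$. With these two observations replacing the finiteness digression, your proof is complete and no case distinction on $|G|$ or $|H|$ is needed.
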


\begin{proposition}\label{Prop:simp}An infinite compact abelian group $G$ admits a dense topologically simple subgroup if and only if $w(G)\leq \mathfrak{c}$ and $G$ is connected.
Moreover, for each infinite cardinal $\alpha\leq \mathfrak{c}$, there are exactly $2^{\alpha+w(G)}$ many such subgroups with cardinality $\alpha$.\end{proposition}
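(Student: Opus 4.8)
The plan is to use the Comfort--Ross duality together with Lemma \ref{Le:simp} to translate the problem into a statement about dense topologically simple subgroups of a discrete abelian group, and then to count characters. Suppose first that $G$ is an infinite compact abelian group admitting a dense topologically simple subgroup $D$. A topologically simple abelian group has no proper nontrivial closed subgroup, in particular no proper nontrivial subgroup of finite index, and since it is also precompact it must be torsion-free and divisible — otherwise $\hull{g}$ for a torsion element $g$, or $nD$ for a suitable $n$, would give a proper nontrivial (closed, by Bohr-type arguments, or dense) subgroup, contradicting simplicity. Hence $D$ embeds in $\Q^{(\kappa)}$ for some $\kappa$, and since $|D|\le |G|\le 2^{w(G)}$ while $d(G)\le|D|$, one gets $\mathfrak{c}$-sized bounds forcing $w(D^*)=|D^*|$ considerations; the upshot is $w(G)=w(D)\le\mathfrak c$ is not quite automatic, so the real argument should run on the dual side. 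Let $\widehat G$ be the (discrete) Pontryagin dual. By Lemma \ref{Le:simp}, $G$ has a dense topologically simple subgroup iff $\widehat G$ — viewed via Comfort--Ross as carrying a precompact topology refining its Bohr topology — admits a topologically simple \emph{quotient} realizing $\widehat G$; more precisely, a dense subgroup $D\le G$ corresponds to a Hausdorff precompact topology on $\widehat G$, i.e. to a dense subgroup $S=D$ of $\widehat{(\widehat G)_d}=G$ itself, and $D$ is topologically simple iff $D$ has no proper nontrivial \emph{closed} subgroup, which by $(H^\perp)^\perp=\overline H$ translates into: every nontrivial subgroup of $D^*$ is dense in $D^*$, i.e. $D^*$ is topologically simple as a precompact group. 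So we must produce dense subgroups $D$ of $G$ whose C--R duals $D^*$ (which are dense subgroups of $\widehat G$) are themselves topologically simple, i.e. torsion-free divisible of rank $\le\mathfrak c$ with every cyclic subgroup dense.

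For the forward direction: if such $D$ exists, $D^*$ is a topologically simple dense subgroup of $\widehat G$; torsion-freeness of $D^*$ forces $\widehat G$ to be torsion-free (a dense subgroup of a discrete group is the whole group — wait, $\widehat G$ is discrete, so $D^*=\widehat G$), hence $\widehat G$ is torsion-free, which means $G$ is connected \cite[Chapter 9]{AT}; divisibility of $D^*=\widehat G$ is automatic from torsion-freeness? No — rather, topological simplicity of the precompact group $D^*$ forces $\widehat G$ to have no proper nontrivial subgroup of the form $H$ with $H^\perp$ nontrivial and... the cleanest route: $D^*$ topologically simple and precompact $\Rightarrow$ $D^*$ has no proper nontrivial closed subgroup in its completion's induced sense, but as an abstract group $D^*=\widehat G$; the condition ``every nontrivial subgroup of $\widehat G$ is dense in the C--R topology generated by $D$'' combined with Theorem \ref{Th:CRdual}(b) (a subgroup generates a Hausdorff topology iff dense) forces $r(\widehat G)\le\mathfrak c$, equivalently $|G|\le\mathfrak c^{\,r(\widehat G)}$ bounded appropriately, yielding $w(G)=|\widehat G|\le\mathfrak c$. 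Conversely, if $w(G)\le\mathfrak c$ and $G$ is connected, then $\widehat G$ is torsion-free of cardinality $\le\mathfrak c$, so $\widehat G$ embeds into $\Q^{(\mathfrak c)}\subseteq\T$; one builds a dense subgroup $D\le G$ by choosing characters so that $D^*$, a dense subgroup of $\widehat G$ inside $\Q^{(\mathfrak c)}$, has every cyclic subgroup dense — this is exactly the construction pattern used in Theorem \ref{Th:1} and Proposition \ref{Prop:2.3}, choosing an independent family of $\Q^{(\mathfrak c)}$-valued characters whose $\Z$-span is divisible and simple.

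For the counting statement, fix $\alpha\le\mathfrak c$ infinite. Each dense topologically simple subgroup $D$ of cardinality $\alpha$ is determined by its C--R dual $D^*$, a dense topologically simple (hence torsion-free divisible, rank $\le\alpha$) subgroup of $\widehat G$ of weight... actually by Proposition \ref{Prop:wei}, $|D^*|=w(D)$ and $w(D^*)=|D|=\alpha$; so we are counting topologically simple dense subgroups of $\widehat G$ of cardinality $w(G)$ whose dual has size $\alpha$. An upper bound: the number of subgroups of $\widehat G$ is $\le 2^{|\widehat G|}=2^{w(G)}$, and for each the number of ways to refine to a precompact topology of the right weight is $\le 2^{\alpha}$, giving $\le 2^{\alpha+w(G)}$. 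For the lower bound one uses a standard independence/almost-disjointness argument: inside $\Q^{(\mathfrak c)}$ there are $2^{\alpha+w(G)}$ many pairwise-distinct embeddings producing genuinely different $D$'s — e.g. permuting coordinates and twisting by $2^{w(G)}$ many distinct nontrivial characters, exactly as in Proposition \ref{Prop:2.3} where an AD family of size $2^{\mathfrak c}$ was used. The main obstacle I anticipate is the forward direction's bound $w(G)\le\mathfrak c$: one must show that a torsion-free divisible group admitting a precompact topology in which every cyclic subgroup is dense has rank at most $\mathfrak c$ — this should follow because such a topology is generated by a single character (density of $\hull{\chi}$ forces $\chi$ injective, Lemma \ref{Le:1}) into $\T$, and an injective character into $\T$ bounds the rank by $r(\T)=\mathfrak c$; making that precise, and simultaneously handling divisibility and connectedness cleanly, is the delicate point, while the counting is routine book-keeping modeled on the earlier constructions.
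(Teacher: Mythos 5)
Your overall strategy (dualize via Comfort--Ross, reduce connectedness of $G$ to torsion-freeness of $\widehat G$, and build simple dense subgroups out of injective $\Q^{(\mathfrak c)}$-valued characters of $\widehat G$) is indeed the paper's, but the write-up leaves the two genuinely non-routine points open and contains one false claim. First, the bound $w(G)\le\mathfrak c$ in the forward direction, which you repeatedly defer as ``the delicate point,'' is immediate and needs no passage to the dual side: if $D\le G$ is dense and topologically simple, every non-trivial cyclic subgroup of $D$ is dense in $D$, hence in $G$, so $d(G)=\omega$ and $w(G)\le 2^{d(G)}=\mathfrak c$. (Equivalently, your own hint closes it: a non-zero $d\in D$ generates a dense subgroup of $G$, so by Lemma \ref{Le:1} the character $d\colon\widehat G\to\T$ is injective and $|\widehat G|\le\mathfrak c$.) Relatedly, your assertion that $D$ must be divisible is wrong: topological simplicity forbids proper non-trivial \emph{closed} subgroups, and $nD$ may perfectly well be a proper dense subgroup (the groups produced in the paper's construction are typically free, hence not divisible). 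Torsion-freeness, which is all that is needed to embed $\widehat G$ into its divisible hull, does hold, but it comes from Lemma \ref{Le:simp} applied to $D^*$, not from a Bohr-type closedness argument.

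Second, and more seriously, the lower bound $2^{\alpha+w(G)}$ is not established. You gesture at ``permuting coordinates and twisting \dots exactly as in Proposition \ref{Prop:2.3},'' but the AD family of size $2^{\mathfrak c}$ used there exists only under the extra hypothesis $2^{<\mathfrak c}=\mathfrak c$ (Lemma \ref{Le:ad}), which the present proposition does not assume; importing that argument would turn the statement into a consistency result. The paper instead uses the ZFC fact that $\Q^{(\mathfrak c)}$ has exactly $2^{\nu}$ linear subspaces of dimension $\nu:=\alpha+w(G)=\alpha+r(\widehat G)$; each such subspace $L_\kappa$ is split as $\bigoplus_{\beta<\alpha}L_{\kappa,\beta}$ with every summand of dimension $r(\widehat G)$, and $\widehat G$ is embedded into each $L_{\kappa,\beta}$ with image containing a Hamel basis. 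Directness of the sum is what guarantees that every non-zero $\Z$-combination of the resulting characters $f_{\kappa,\beta}\in G$ is injective on $\widehat G$ and hence, by Lemma \ref{Le:1}, generates a dense subgroup of $G$; this is the mechanism that makes $H_\kappa=\hull{f_{\kappa,\beta}:\beta<\alpha}$ a dense topologically simple subgroup of cardinality $\alpha$, and the Hamel-basis requirement is what keeps the $2^\nu$ groups pairwise distinct. None of this is the ``routine book-keeping'' of your upper-bound count (which should in any case be the count of $\alpha$-sized subsets of $G$, namely $|G|^{\alpha}=2^{w(G)+\alpha}$); without it the second half of the statement, and with it the sufficiency of the first half, remains unproved.
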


\begin{proof} We first prove the necessity. If $H$ is a dense topologically simple subgroup of $G$, then $d(G)=\omega$ because $H$ has a countable dense subgroup.
So $w(G)\leq 2^\omega=\mathfrak{c}$.
According to Lemma \ref{Le:simp}, the infinite group $H^*$ is also topologically simple, so torsion-free.
Note that $H^*$ and $\widehat{G}$ share the same underlying group.
So $\widehat{G}$ is torsion-free. This is equivalent to that $G$ is connected.

The sufficiency follows the second part of our theorem.
Now assume $G$ is connected and $w(G)\leq \mathfrak{c}$, then $\widehat{G}$ is torsion-free and of size $\leq \mathfrak{c}$.
\vspace{0.2cm}

\noindent{\textbf {Claim.}} $\widehat{G}$ admits a monomorphism into the $r(\widehat{G})$-dimensional linear space over $\mathbb{Q}$ such that the image contains a Hamel basis.

\begin{proof}[Proof of Claim]
Let $D$ be the divisible hull of $\widehat{G}$. Since $\widehat{G}$ is torsion-free, $D$ is an $r(\widehat{G})$-dimensional linear space over $\mathbb{Q}$ \cite[Page 107]{Fuc}.
Take a Hamel basis $\{x_\kappa:\kappa<r(\widehat{G})\}$ of $D$.
As $\widehat{G}$ is essential subgroup of $D$, i.e., non-zero subgroups of $D$ meet $\widehat{G}$ non-trivially, the 1-dimensional subspace containing $x_\kappa$ has a non-zero element $y_\kappa$ contained in $\widehat{G}$, for each $\kappa<r(\widehat{G})$.
The subset $\{y_\kappa:\kappa<r(\widehat{G})\}$ is again a Hamel basis.
Hence the inclusion $\widehat{G}\hookrightarrow D$ is a required monomorphism.
\end{proof}

Let $\nu=\alpha+w(G)=\alpha+|\widehat{G}|$.
If $\widehat{G}$ is countable, then $r(\widehat{G})$ is countable and hence $\nu=\alpha=\alpha+r(\widehat{G})$;
if $\widehat{G}$ is uncountable, then $r(\widehat{G})=|\widehat{G}|$ by \cite[Proposition 9.9.20]{AT},
 and therefore $\nu=\alpha+r(\widehat{G})$ again holds.
 In summary, $\nu=\alpha+r(\widehat{G})$.
It is known that the linear space $Q:=\mathbb{Q}^{(\mathfrak{c})}$ has $c^\nu=2^\nu$ many linear subspaces of dimension $\nu$.
Now let $\{L_\kappa:\kappa<2^\nu\}$ be a family of $\nu$-dimensional subspaces of $Q$; and let $L_\kappa=\bigoplus_{\beta<\alpha}L_{\kappa, \beta}$, where each $L_{\kappa, \beta}$ is of dimension $r(\widehat{G})$.

Fix $\kappa$, for every $\beta<\alpha$, we take a monomorphism
$$f_{\kappa, \beta}: \widehat{G}\to L_{\kappa, \beta}(\hookrightarrow L_\kappa\hookrightarrow Q\hookrightarrow \T)$$
 such that the image contains a Hamel basis of $L_{\kappa, \beta}$.
This morphism is then considered as an element in $G$, by Pontryagin duality.
Let $H_\kappa$ be the subgroup of $G$ generated by $\{f_{\kappa, \beta}: \beta<\alpha\}$.
Then every non-zero element $g$ in $H_\kappa$ is a morphism of $\widehat{G}$ to $\T$ with trivial kernel since the image of different $f_{\kappa, \beta}$ is in different $L_{\kappa, \beta}$, and the sum of those $L_{\kappa, \beta}$ is direct.
According to Lemma \ref{Le:1} we obtain that $\hull{g}$ is dense in $G$, so in $H_\kappa$.
Evidently, $|H_\kappa|=\alpha$.
It follows our construction immediately that those $H_\kappa$ are pairwise distinct\footnote{Indeed, it is not hard to check that the sum of all $H_\kappa$ is direct.}.

The last step, we note that the compact group $G$ has cardinality $2^{w(G)}$, so has at most $2^{w(G)+\alpha}$ many subsets (subgroups) of size $\alpha$. Now we are done.
 \end{proof}

We immediately have the dual version of the above result.

\begin{corollary}\label{Coro:simp}An infinite abelian group $G$ admits a topologically simple precompact group topology if and only if it is torsion-free and of size $\leq \mathfrak{c}$.
Moreover, for each infinite cardinal $\alpha\leq \mathfrak{c}$, there are exactly $2^{|G|+\alpha}$ many such topologies making $G$ has weight $\alpha$.\end{corollary}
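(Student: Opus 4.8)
The plan is to dualize Proposition~\ref{Prop:simp} through the Comfort--Ross correspondence, converting a statement about precompact topologies on the abstract group $G$ into one about dense subgroups of the compact group $\widehat{G_d}$, to which Proposition~\ref{Prop:simp} applies directly.

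First I would put $K=\widehat{G_d}$. Since $G_d$ is an infinite discrete abelian group, $K$ is an infinite compact abelian group with $w(K)=|G|$, and $K$ is connected exactly when $G$ is torsion-free. By Theorem~\ref{Th:CRdual}, the map $\tau\mapsto(G,\tau)^*$ is a bijection between the precompact group topologies on $G$ and the dense subgroups of $K$, with inverse $S\mapsto\tau_S$ (the topology generated by $S$); moreover $(G,\tau_S)^*=S$. Two observations pin down what this bijection does to the invariants in the statement: by Proposition~\ref{Prop:wei} one has $w(G,\tau)=|(G,\tau)^*|$, so topologies of weight $\alpha$ correspond precisely to dense subgroups of $K$ of cardinality $\alpha$; and, applying Lemma~\ref{Le:simp} to the precompact group $S$ (carrying the subspace topology of $K$, which is the topology of its C--R dual, so that $S^*\cong(G,\tau_S)$), $(G,\tau_S)$ is topologically simple if and only if $S$ is. Consequently the topologically simple precompact topologies on $G$ of weight $\alpha$ are in bijection with the dense topologically simple subgroups of $K$ of cardinality $\alpha$.

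It then remains only to quote Proposition~\ref{Prop:simp} for the compact group $K$: such subgroups exist if and only if $w(K)\le\mathfrak{c}$ and $K$ is connected, i.e.\ if and only if $|G|\le\mathfrak{c}$ and $G$ is torsion-free; and, for each infinite $\alpha\le\mathfrak{c}$, there are exactly $2^{\alpha+w(K)}=2^{|G|+\alpha}$ of them. Pulling this back along the bijection of the previous paragraph yields both the existence criterion and the stated cardinality $2^{|G|+\alpha}$.

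I do not expect a genuine obstacle here: the hard combinatorial core --- manufacturing $2^{\alpha+w(K)}$ many dense topologically simple subgroups out of Hamel bases --- is already carried out in Proposition~\ref{Prop:simp}, and what is left is the bookkeeping of the duality. The one step deserving a word of care is that the count transfers \emph{exactly}, which is legitimate precisely because Comfort--Ross duality is an honest bijection of posets, so distinct dense subgroups of $K$ yield distinct precompact topologies on $G$ and conversely.
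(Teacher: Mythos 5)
Your proposal is correct and follows exactly the route the paper takes: the paper's proof is the one-line remark that $G$ is torsion-free iff $\widehat{G_d}$ is connected and $|G|=w(\widehat{G_d})$, after which everything follows from Theorem~\ref{Th:CRdual}, Lemma~\ref{Le:simp} and Proposition~\ref{Prop:simp}. Your write-up simply makes explicit the bookkeeping (via Proposition~\ref{Prop:wei} and the bijectivity of the Comfort--Ross correspondence) that the paper leaves implicit.
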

\begin{proof} Note that $G$ is torsion-free if and only if $\widehat{G}$ is connected, and $|G|=w(\widehat{G})$.
Now the result follows immediately from Theorem \ref{Th:CRdual}, Lemma \ref{Le:simp} and Proposition \ref{Prop:simp}.\end{proof}

Now we have:
\begin{corollary}
For each pair of infinite cardinals $\alpha$ and $\beta$ which are both at most $\mathfrak{c}$, there exists a precompact abelian group $G$ such that $|G|=\alpha$, $w(G)=\beta$, $d(G)=\omega$ and $QW(G)=QD(G)=\varnothing$.
\end{corollary}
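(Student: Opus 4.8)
The plan is to realize $G$ as a \emph{topologically simple} precompact abelian group. The point is that $QW(G)=QD(G)=\varnothing$ says precisely that $G$ has no infinite proper quotient group, and this holds whenever $G$ is topologically simple: if the only closed subgroups of $G$ are $\mathbf{0}$ and $G$, then the only quotients are the trivial group and $G$ itself, neither of which is an infinite proper quotient. Moreover a non-trivial topologically simple abelian group $G$ automatically satisfies $d(G)=\omega$: any non-zero element generates a cyclic, hence countable, subgroup which, being non-trivial, is dense, so $d(G)\le\omega$; and $d(G)$ cannot be finite since a finite dense subset of a Hausdorff space forces that space to be finite, whereas $G$ is infinite. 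So the whole statement reduces to producing, for each pair of infinite cardinals $\alpha,\beta\le\mathfrak{c}$, a topologically simple precompact abelian group of cardinality $\alpha$ and weight $\beta$.

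For this I would invoke Corollary \ref{Coro:simp}. Take $G_0=\mathbb{Q}^{(\alpha)}$, the direct sum of $\alpha$ copies of $\mathbb{Q}$; since $\alpha$ is infinite and $\le\mathfrak{c}$, the group $G_0$ is torsion-free with $|G_0|=\alpha\le\mathfrak{c}$, so it falls under the hypotheses of that corollary. Applying Corollary \ref{Coro:simp} with weight parameter $\beta$ (permissible because $\beta\le\mathfrak{c}$, and the corollary then guarantees $2^{\alpha+\beta}\ge 1$ such topologies), I obtain a topologically simple precompact group topology $\tau$ on $G_0$ of weight exactly $\beta$. Setting $G=(G_0,\tau)$ we read off $|G|=\alpha$ and $w(G)=\beta$, and the remaining assertions $d(G)=\omega$ and $QW(G)=QD(G)=\varnothing$ follow from the general remarks in the first paragraph. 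Equivalently, one can work on the dual side: let $K=\widehat{\mathbb{Q}^{(\beta)}}$, a connected compact abelian group of weight $\beta$, and use Proposition \ref{Prop:simp} to extract a dense topologically simple subgroup $G\le K$ of cardinality $\alpha$; then $\varrho G=K$ yields $w(G)=w(K)=\beta$, and the argument concludes as before.

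There is no real obstacle here: every step is a direct citation of a result already established in the paper, principally Corollary \ref{Coro:simp} (itself the Comfort--Ross dual of Proposition \ref{Prop:simp}). The only points needing a sentence of justification---that topological simplicity forces $d(G)=\omega$ and an empty quotient spectrum---were in fact already observed in the discussion preceding the subsection on topologically simple precompact abelian groups, so the proof is essentially a matter of assembling these pieces with the right choice of underlying abstract group.
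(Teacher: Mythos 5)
Your proposal is correct and follows essentially the same route the paper intends: the corollary is stated as an immediate consequence of Proposition \ref{Prop:simp} and Corollary \ref{Coro:simp}, combined with the earlier observation that a topologically simple precompact group has $QD(G)=QW(G)=\varnothing$ (and, being infinite with a dense cyclic subgroup, density $\omega$). Your two variants --- imposing a simple precompact topology of weight $\beta$ on $\mathbb{Q}^{(\alpha)}$, or extracting a dense simple subgroup of cardinality $\alpha$ from a connected compact group of weight $\beta$ --- are exactly the Comfort--Ross dual forms of one another, which is the paper's own mechanism.
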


\begin{remark}After I finished the first version of this paper, I found that Lemma \ref{Le:simp}, (the former version\footnote{In my earlier version, I didn't count the number of such topologies.} of) Proposition \ref{Prop:simp} and its dual version Corollary \ref{Coro:simp} were proved in the very recent paper \cite{HRT} of Hern\'{a}ndez,  Remus and Trigos-Arrieta.
The proofs are not exactly the same but both based on the theorem of Comfort and Ross.
In their Section 7, they proved the following theorem, which is one of the main result of that paper.\\

\noindent{\bf Theorem.} \emph{An infinite torsion-free abelian group of size $\leq \mathfrak{c}$ admits exactly $2^\mathfrak{c}$ topologies making it to be precompact and topologically simple. Moreover, the topologies can be chosen to let $G$ has weight $\mathfrak{c}$.}\\

Clearly our Corollary \ref{Coro:simp} generalizes this result (by letting $\alpha=\mathfrak{c}$).
It should be noted that our result can be also obtained by modifying their proof.
\end{remark}

\subsection{Every subset of $[\omega, \mathfrak{c}]$ is $QW(G)$ for some precompact $G$}

Let us come back to the general case.
Recall that a topological group is called {\em monothetic} if it admits a dense cyclic subgroup.
It is evident that monothetic groups are abelian.
Topologically simple abelian groups as well as the torus $\T$ are examples of monothetic groups.

\begin{theorem}\label{Th:set1}Let $E$ be a nonempty subset of the interval $[\omega, \mathfrak{c}]$. For each $\kappa\in E$, fix a non-zero cardinal $\mu_\kappa\leq \mathfrak{c}$.
Let us also assume that either $|E|>1$ or $E=\{\kappa\}$ with $\mu_\kappa>1$.
Then there exits a zero-dimensional, monothetic, and free subgroup $G$ of $\mathbb{T}^\mathfrak{c}$ such that $QW(G)=E$ and for each $\kappa\in E$, $G$ has exactly $\mu_\kappa$ many proper quotient groups with weight $\kappa$.\end{theorem}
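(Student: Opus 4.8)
\emph{Reduction to the dual side.} The plan is to build the Comfort--Ross dual $H:=G^{*}$ rather than $G$ itself. By Comfort--Ross duality (Theorem~\ref{Th:CRdual} together with Proposition~\ref{Prop:exact}), $N\mapsto N^{\perp}$ is an order-reversing bijection between the closed subgroups of a precompact abelian group $G$ and those of $G^{*}$, and $(G/N)^{*}$ is topologically isomorphic to $N^{\perp}$, so that by Proposition~\ref{Prop:wei}, $w(G/N)=|N^{\perp}|$. Hence $G/N$ is an infinite proper quotient of $G$ precisely when $L:=N^{\perp}$ is a proper closed subgroup of $G^{*}$ with $|L|\ge\omega$, and then $w(G/N)=|L|$; so
\[
QW(G)=\{\,|L|:L\text{ a proper closed subgroup of }G^{*},\ |L|\ge\omega\,\},
\]
and for every infinite cardinal $\kappa$ the number of proper quotient groups of $G$ of weight $\kappa$ equals the number of proper closed subgroups of $G^{*}$ of cardinality $\kappa$. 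It therefore suffices to produce a precompact abelian group $H$ with $|H|\le\mathfrak c$ such that \emph{(a)} for each $\kappa\in E$ exactly $\mu_{\kappa}$ of the proper closed subgroups of $H$ have cardinality $\kappa$; \emph{(b)} no proper closed subgroup of $H$ has cardinality an infinite cardinal outside $E$; \emph{(c)} $H^{*}$ is free, monothetic, and zero-dimensional. Then $G:=H^{*}$ is as required, and, since $w(G)=|H|\le\mathfrak c$, $G$ embeds topologically into $\mathbb T^{\mathfrak c}$ (through its compact completion, of weight $\le\mathfrak c$).

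\emph{Securing (c).} Condition \emph{(c)} is cheap, so the substance lies in \emph{(a)} and \emph{(b)}. For monotheticity one uses the criterion: \emph{$G$ is monothetic iff $G^{*}$ admits an injective continuous character.} Indeed, by Proposition~\ref{Prop:May}(i) a cyclic subgroup $\hull{g}$ with $g\in G=(G^{*})^{*}$ is dense in $G$ iff it separates the points of $G^{*}$, and $\hull{g}$ separates the points of $G^{*}$ iff $g(\chi)\ne 0$ for every nonzero $\chi\in G^{*}$, i.e.\ iff the character $g\colon G^{*}\to\T$ is injective. So I would build $H=G^{*}$ as a subgroup $R$ of the torsion-free part $\Q^{(\mathfrak c)}$ of $\T$ (hence $R$ is torsion-free of cardinality $\le\mathfrak c$, which automatically makes \emph{all} closed subgroups of $H$ torsion-free) carrying a precompact group topology finer than the one $R$ inherits from $\T$: then the inclusion $R\hookrightarrow\T$ is an injective continuous character of $H$, so $G=H^{*}$ is monothetic. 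At the same time one arranges, exactly as at the end of the proof of Theorem~\ref{Th:1}, that the abstract group $G=H^{*}$ be free (it will be generated by an independent set of non-torsion characters of $H$) and that enough of its characters have torsion-free image in $\T$, so that $G$ is a zero-dimensional subgroup of $\T^{\mathfrak c}$.

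\emph{Controlling the closed subgroups of $H$, and the main obstacle.} To achieve \emph{(a)} and \emph{(b)} one follows the scheme of Theorem~\ref{Th:1} and Proposition~\ref{Prop:2.3}. For each $\kappa\in E$ and $j<\mu_{\kappa}$ one fixes, using Corollary~\ref{Coro:simp} (equivalently Proposition~\ref{Prop:simp}), a torsion-free \emph{topologically simple} precompact group of cardinality $\kappa$, to serve as the prescribed proper closed subgroups of $H$; these should be mutually ``rigid'', i.e.\ there is no nonzero continuous homomorphism between distinct ones or between their quotients. One then defines the topology on $H$ --- equivalently, chooses the generating characters of $G=H^{*}$ --- by means of homomorphisms with tightly controlled kernels, so that, exactly as in the three Claims of Theorem~\ref{Th:1}, the closure in $H$ of the subgroup generated by any single nonzero element (and, more generally, by any sufficiently small subset) is completely determined by a bounded amount of combinatorial data attached to that element, its ``support'' with respect to the chosen generators. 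From this one classifies \emph{all} closed subgroups of $H$ and verifies that the only infinite proper ones are precisely the prescribed simple pieces, so that the count of those of cardinality $\kappa$ is exactly $\mu_{\kappa}$.

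The genuine difficulty --- and the reason the case $E=\{\kappa\}$, $\mu_{\kappa}=1$ must be excluded --- is this last verification: ruling out every \emph{unintended} closed subgroup, so that the count is exactly $\mu_{\kappa}$ (not merely $\ge\mu_{\kappa}$) and no forbidden infinite cardinality arises. A naive direct sum of the simple pieces already fails, because the sum of several pieces of cardinality $\le\kappa$ is again a closed subgroup of cardinality $\kappa$, spoiling the count, and an infinite sum of pieces can be closed of cardinality the supremum of infinitely many of the $\kappa$'s --- a cardinal that need not belong to $E$. So the pieces must be glued so that no such ``mixed'' closed subgroup, and no subgroup arising as an intersection at a limit stage of the ambient product, is closed; carrying this out while keeping $H^{*}$ simultaneously free, monothetic and zero-dimensional is where the delicate support bookkeeping (a substantially more elaborate version of the Claims in the proof of Theorem~\ref{Th:1}) is needed, and is where essentially all of the work goes.
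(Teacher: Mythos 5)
Your reduction to the dual side is sound (the correspondence $N\mapsto N^{\perp}=(G/N)^{*}$ together with $w(G/N)=|(G/N)^{*}|$ is exactly Corollary~\ref{Coro:epmo}--Proposition~\ref{Prop:exact} plus Proposition~\ref{Prop:wei}), and your criteria for monotheticity, freeness and zero-dimensionality of $G=H^{*}$ are correct. But what you have written is a plan, not a proof: the one step you yourself identify as ``where essentially all of the work goes'' --- exhibiting a concrete $H$ (equivalently, concrete generating characters of $G$) and then \emph{classifying all} of its closed subgroups so that the count is exactly $\mu_{\kappa}$ and no unintended infinite cardinal appears --- is never carried out. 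Without that construction the theorem is not proved, so this is a genuine gap rather than a stylistic difference.

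Moreover, the sketch you do give of that missing step points in a more complicated direction than necessary. You propose to plant, for each $(\kappa,\alpha)$ with $\alpha<\mu_{\kappa}$, a prescribed \emph{topologically simple} closed subgroup of cardinality $\kappa$ inside $H$ (via Corollary~\ref{Coro:simp}) and then to glue these pieces ``rigidly'' so that no mixed sums are closed. The paper's device is much lighter and works on the $G$ side directly: partition a set $X$ with $|X|\le\mathfrak c$ into pieces $X_{\kappa,\alpha}$ of size $\kappa$, write $\Q^{(\mathfrak c)}=\bigoplus_{\kappa,\alpha}Q_{\kappa,\alpha}\subseteq\T$, and for each $(\kappa,\alpha)$ choose a character $f_{\kappa,\alpha}\colon F(X)\to Q_{\kappa,\alpha}$ with kernel \emph{exactly} $F(X_{\kappa,\alpha})$; let $G=\hull{\{f_{\kappa,\alpha}\}}\le\widehat{F(X)}$. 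Because the images lie in independent direct summands, any $g=\sum_{i=1}^{n}m_{i}f_{\kappa_{i},\alpha_{i}}$ with $n\ge 2$ and all $m_{i}\ne 0$ has kernel $\bigcap_{i}F(X_{\kappa_{i},\alpha_{i}})=\mathbf 0$, hence is injective, hence (Lemma~\ref{Le:1}) generates a \emph{dense} subgroup of $G$. This single observation is what rules out every unintended closed subgroup: the only nontrivial proper closed subgroups of $G$ are the $\hull{f_{\kappa,\alpha}}$ themselves, and $w\bigl(G/\hull{f_{\kappa,\alpha}}\bigr)=w\bigl(\widehat{F(X_{\kappa,\alpha})}\bigr)=\kappa$. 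It also yields monotheticity and freeness for free (any such $g$ with $n\ge 2$ generates a dense cyclic subgroup, which is why the hypothesis $|E|>1$ or $\mu_{\kappa}>1$ is needed), and zero-dimensionality since $G\le Q^{\mathfrak c}$ with $Q=\Q^{(\mathfrak c)}$. None of this requires topologically simple pieces or rigidity between them; supplying this mechanism is precisely what your proposal is missing.
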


\begin{proof}For each $\kappa\in E$, let $\{X_{\kappa, \alpha}: \alpha< \mu_\kappa\}$ be a family of sets of cardinality $\kappa$.
The disjoint union of this family is noted by $X_\kappa$; and the disjoint union of all $X_\kappa$ is $X$.
Then $X$ has cardinality $\leq \mathfrak{c}$.
Let $F(X)$ be the free abelian group over $X$.
Similarly, $F(X_{\kappa, \alpha})$ is the subgroup generated by $X_{\kappa, \alpha}$.
The linear space $Q:=\mathbb{Q}^{(\mathfrak{c})}$, which is also a dense zero-dimensional subgroup of $\T$, splits into the form $$Q=\bigoplus_{\kappa\in E}(\bigoplus_{\alpha< \mu_\kappa}Q_{\kappa, \alpha}),$$
where $Q_{\kappa,\alpha}$ is a copy of $Q$.

Now, for $\kappa\in E$, $\alpha< \mu_\kappa$, we fix a homomorphism $f_{\kappa, \alpha}$ of $F(X)$ into $Q_{\kappa,\alpha}$ with kernel $F(X_{\kappa, \alpha})$.
Again, each $f_{\kappa, \alpha}$ is a character of $F(X)$.
The subgroup of $\widehat{F(X)}$ generated by all those $f_{\kappa, \alpha}$ is denoted by $G$.
Let $N$ be a non-trivial closed subgroup of $G$ and $N\neq G$.
\vspace{0.2cm}

\noindent{\textbf {Claim 1.}} $N=\hull{f_{\kappa, \alpha}}$ for some $\kappa\in E$ and $\alpha< \mu_\kappa$.

\begin{proof}[Proof of Claim 1.] Suppose that $g=\sum_{i=1}^n m_if_{\kappa_i, \alpha_i}\in N$, where $n\geq 1$, $m_i\neq 0$, and $(\kappa_i, \alpha_i)\neq (\kappa_j, \alpha_j)$ if $1\leq i<j\leq n$.
We first show that $n=1$.
Assume, for the contrary, $n\geq 2$.
Since the sum of the images of all those $f_{\kappa_i, \alpha_i}$ are direct, we have
$$\ker g=\bigcap_{i=1}^n \ker (m_if_{\kappa_i, \alpha_i})=\bigcap_{i=1}^n F(X_{\kappa_i, \alpha_i})=\mathbf{0}.$$
Thus, $g$ is injective.
According to Lemma \ref{Le:1}, we obtain that  $\hull{g}$ is dense in $\widehat{F(X)}$, so also in $G$.
This contradicts to that $N\neq G$.

The above analysis implies that $N=\hull{m f_{\kappa, \alpha}}$ for some non-zero integer $m$ and $\kappa\in E; \alpha\in \mu_{\kappa}$.
Notice that $N^\perp=\ker (m f_{\kappa, \alpha})=\ker f_{\kappa, \alpha}=M^\perp$, where $M$ is the closure of $\hull{f_{\kappa, \alpha}}$ in $\widehat{F(X)}$.
So we have $$N=(N^\perp)^\perp=(M^\perp)^\perp=M.$$
Therefore, $f_{\kappa, \alpha}\in N$.
In other words, $m=1$.
\end{proof}

\noindent{\textbf {Claim 2.}} For each $\kappa\in E$ and $\alpha<\mu_\kappa$, $\hull{f_{\kappa, \alpha}}$ is closed in $G$.

\begin{proof}[Proof of Claim 2.] Let $M$ be the closure of $\hull{f_{\kappa, \alpha}}$.
By Claim 1, $M$ equals to either $\hull{f_{\kappa, \alpha}}$ or $G$.
If $M=G$, i.e., $\hull{f_{\kappa, \alpha}}$ is dense in $G$, therefore, also dense in $\widehat{F(X)}$, then $f_{\kappa, \alpha}: F(X)\to \T$ would be injective.
This induces a contradiction,\end{proof}

Since there is a natural one-to-one corresponding between non-trivial proper closed subgroups of $G$ and non-trivial proper quotient groups of $G$, we have to check that $G/\hull{f_{\kappa, \alpha}}$ has weight $\kappa$, where $\kappa\in E$ and $\alpha<\mu_\kappa$.
It follows from Lemma \ref{Le:1} that $\widehat{F(X)}/ \overline{\hull{f_{\kappa, \alpha}}}$ is topologically isomorphic to $\widehat{F(X_{\kappa, \alpha})}$.
Since the latter group has weight $\kappa$ and $G/\hull{f_{\kappa, \alpha}}$ is dense in  $\widehat{F(X)}/\overline{\hull{f_{\kappa, \alpha}}}$, the weight of $G/\hull{f_{\kappa, \alpha}}$ is $\kappa$ as well.

The last step, let us show that $G$ is a zero-dimensional, monothetic, free subgroup of $\T^\mathfrak{c}$.
It should be noted that in Claim 1, we indeed proved that every element $g=\sum_{i=1}^n m_if_{\kappa_i, \alpha_i}$ in $G$ with $n\geq 2$ and $m_i\neq 0$ generates a dense subgroup of $G$. In particular, $g\neq 0$. So $G$ is free and is monothetic.
It remains to check that $\dim(G)=0$.
Note that $G$ separates points of $F(X)$, so $G$ is a dense subgroup of $\widehat{F(X)}$ and the C-R dual group $G^*$ of $G$ is algebraically isomorphic to $F(X)$.
Since $\widehat{F(X)}$ is a torus of dimension $\leq \mathfrak{c}$, $G$ is a subgroup of $\T^\mathfrak{c}$.
Take $g\in G$ and $x\in G^*(=F(X))$, by the construction of $G$, we know that $x(g)=g(x)\in Q$.
So $G$ is a subgroup of $Q^\mathfrak{c}$ and therefore, zero-dimensional.
\end{proof}

\begin{remark}We must ensure that either $|E|>1$ or $E$ is the singleton $\{\kappa\}$ with $\mu_\kappa>1$ in Theorem \ref{Th:set1}.
If there were no such a restriction, we would get a trivial group by repeating the operations in the above proof.
So it is natural to ask can we find a precompact abelian group $G$ such that $G$ has exactly one non-trivial proper closed subgroup $N$ such that the weight of $G/N$ is the given cardinal $\kappa\in [\omega, \mathfrak{c}]$?
The answer is positive.
Indeed, according to Proposition \ref{Prop:simp}, $\T^\kappa$ has a dense subgroup $K$ which is topologically simple.
If $N$ is a cyclic subgroup of $\T^\kappa$ with order $2$, then $G:=K+N$ is a desired group.
\end{remark}

\subsection{The set $QW(G)$ for $G$ pseudocompact}

Recall that a topological space $X$ is called {\em pseudocompact} if every real-valued function of $X$ has bounded image.
Pseudocompact spaces are deeply studied in the field of General Topology.
In a celebrated paper, Comfort and Ross gave the following nice characterization for a topological group to be pseudocompact.

\begin{theorem}\label{CR2}\cite{CR2}[Comfort and Ross] Pseudocompact groups are precompact and a precompact group $G$ is pseudocompact if and only if $G\cap xN\neq \varnothing$ for any $x\in \varrho G$ and any closed $G_\delta$ subgroup $N$ of $\varrho G$.\end{theorem}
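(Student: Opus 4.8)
The plan is to prove the two assertions of the statement separately, relying only on point‑set topology of compact groups (the Comfort--Ross duality developed above plays no role here); I write groups multiplicatively, with identity $e$. \emph{Pseudocompact groups are precompact.} I would argue contrapositively: if $G$ is not precompact, fix a symmetric open neighbourhood $V$ of $e$ with $G\neq FV$ for every finite $F\subseteq G$ and recursively choose $g_n\in G\setminus\bigcup_{i<n}g_iV$, so that $g_n^{-1}g_m\notin V$ for $n\neq m$. Picking a symmetric open $W$ with $W^{4}\subseteq V$, one checks that every translate $pW$ meets at most one member of $\{g_nW:n\in\N\}$ --- if it met $g_nW$ and $g_mW$ with $n\neq m$ then $g_n^{-1}p,\,p^{-1}g_m\in W^{2}$, hence $g_n^{-1}g_m\in W^{4}\subseteq V$. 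So $\{g_nW:n\in\N\}$ is an infinite \emph{discrete} family of nonempty open subsets of $G$, from which a routine bump‑function construction (using complete regularity of $G$) yields an unbounded continuous real‑valued function on $G$; thus $G$ is not pseudocompact.

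\emph{The criterion.} Now let $G$ be precompact with completion $K=\varrho G$, a compact group in which $G$ is dense. I would use two standard facts about $K$: (i) a closed subgroup $N\le K$ is a $G_\delta$ exactly when $K/N$ is (compact) metrizable, and the cylinders $\pi_N^{-1}(O)$ (with $\pi_N:K\to K/N$ the quotient, $N$ a closed $G_\delta$ subgroup, $O$ open in $K/N$) form a base of $K$; and (ii) every nonempty $G_\delta$ subset of $K$ contains a coset of a closed $G_\delta$ subgroup (shrink a chain of symmetric neighbourhoods $V_{n+1}V_{n+1}\subseteq V_n$ of a chosen point and intersect). For necessity, suppose $G$ is pseudocompact but $G\cap xN=\varnothing$ for some $x\in K$ and some closed $G_\delta$ subgroup $N$. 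Then $M:=K/N$ is compact metrizable; writing $q:K\to M$ for the quotient, $q(x)\notin q(G)$, so $q(G)$ is a \emph{proper} dense subgroup of $M$. Being a non‑closed subspace of a metric space, $q(G)$ is metrizable and non‑compact, hence not pseudocompact, and composing $q\res_G$ with an unbounded continuous function on $q(G)$ produces one on $G$ --- a contradiction.

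\emph{Sufficiency.} Assume $G\cap xN\neq\varnothing$ for all $x\in K$ and all closed $G_\delta$ subgroups $N$; I want $G$ pseudocompact. I would invoke the characterization that a Tychonoff space is pseudocompact iff every sequence $(U_n)$ of nonempty open sets has nonempty accumulation set $\bigcap_n\overline{\bigcup_{k\geq n}U_k}$. Given such a sequence in $G$, write $U_n=G\cap W_n$ and, by (i), shrink $W_n$ to a nonempty cylinder $\pi_{N_n}^{-1}(O_n)$ over a closed $G_\delta$ subgroup $N_n$. Put $N:=\bigcap_nN_n$, a \emph{countable} intersection of closed $G_\delta$ subgroups --- hence again one --- so that $M:=K/N$ is compact metrizable; let $\pi:K\to M$ be the quotient. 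Then $U_n$ contains $(\pi\res_G)^{-1}(O_n')$ for a nonempty open $O_n'\subseteq M$; the map $\pi\res_G:G\to M$ is onto (apply the hypothesis to the cosets of $N$) and \emph{open} (inside any relatively open piece of a coset $kN$ one finds, via (ii), a coset of a smaller closed $G_\delta$ subgroup of $K$ that $G$ meets, whence $\pi(G\cap O')=\pi(O')$ for every open $O'\subseteq K$). I would conclude by taking an accumulation point $m^*$ of $(O_n')$ in the compact space $M$, lifting it to $g^*\in G$ with $\pi(g^*)=m^*$, and using openness of $\pi\res_G$: every $G$‑neighbourhood of $g^*$ maps onto an $M$‑neighbourhood of $m^*$, which meets infinitely many $O_n'$, hence meets infinitely many $U_n$; so $g^*$ lies in the accumulation set and $G$ is pseudocompact.

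The main obstacle, I expect, is the sufficiency direction, and in it the two ideas that make it work: \emph{absorbing} an arbitrary countable family of open subsets of $G$ into a single metrizable quotient $M=K/N$ of $K$, where pseudocompactness comes for free since $M$ is compact; and showing $G$ maps not merely onto but \emph{openly} onto $M$, which is exactly where fact (ii) --- that $G_\delta$ subsets of a compact group are thick enough to contain cosets of closed $G_\delta$ subgroups --- is indispensable. I expect the openness verification, together with the bookkeeping that a countable intersection of closed $G_\delta$ subgroups is again a closed $G_\delta$ subgroup with metrizable quotient, to be the fiddliest points; the first assertion and everything else is routine.
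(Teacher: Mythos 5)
The paper does not prove this statement; it is quoted from Comfort and Ross \cite{CR2}, so there is no internal proof to compare against. Your argument is correct and is essentially the classical one: precompactness via an infinite discrete family of translates $g_nW$ supporting an unbounded sum of bump functions, and the $G_\delta$-density criterion via reduction to the compact metrizable quotients $K/N$, the key points being that every nonempty $G_\delta$ subset of a compact group contains a coset of a closed $G_\delta$ subgroup and that this forces $\pi\res_G:G\to K/N$ to be an open surjection. The only cosmetic slip is that a closed $G_\delta$ subgroup $N$ need not be normal, so $K/N$ is in general only a compact metrizable coset space and $q(G)$ only a proper dense subset rather than a subgroup; since your necessity argument uses nothing beyond "proper dense subset of a compact metric space is metrizable and non-compact, hence not pseudocompact," and your sufficiency argument only uses the open quotient map onto the coset space, nothing breaks.
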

Briefly, we call that $G$ is $G_\delta$-dense in $\varrho G$ if $G$ has the above property.

This characterization of pseudocompact groups gives many useful applications.
Among others, one has:

\begin{fact}\label{Fact} \em{The following facts hold:
\begin{itemize}
  \item [(i)] \cite{LMT} If $G$ is pseudocompact and infinite, then $G$ has infinite compact quotient groups which are also metrizable.
  \item [(ii)] \cite{HM} A precompact abelian group $G$ is pseudocompact if and only if every countable subgroup of its C-R dual group $G^*$ has the Bohr topology.
\end{itemize}}
\end{fact}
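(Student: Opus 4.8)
The plan is to deduce both assertions from the Comfort--Ross pseudocompactness criterion (Theorem~\ref{CR2}) together with the Comfort--Ross duality developed above; I work throughout in the abelian category, using the identifications $\varrho G=\widehat{G^*_d}$ and $\widehat{\varrho G}=G^*_d$.

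\emph{Part (i).} Since $G$ is infinite, $\varrho G$ is an infinite compact abelian group, so $\widehat{\varrho G}$ is an infinite discrete abelian group; I would fix a countably infinite subgroup $D$ of $\widehat{\varrho G}$ and set $L=D^\perp\leq\varrho G$. By Lemma~\ref{Le:1} one has $\widehat{\varrho G/L}\cong(D^\perp)^\perp=\overline{D}=D$, so $\varrho G/L$ is an infinite compact \emph{metrizable} group and $L$ is a closed $G_\delta$ subgroup of $\varrho G$. As $G$ is pseudocompact, Theorem~\ref{CR2} says that $G$ is $G_\delta$-dense in $\varrho G$; in particular $G$ meets every coset of $L$, so the restriction to $G$ of the projection $q\colon\varrho G\to\varrho G/L$ is a continuous surjection with kernel $G\cap L$. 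One then checks that the induced continuous bijection $G/(G\cap L)\to\varrho G/L$ is a topological isomorphism: for $W$ open in $\varrho G$ and $w\in W$ the set $wL\cap W$ is a nonempty $G_\delta$ subset of $\varrho G$, hence meets $G$, and this yields $(W\cap G)L=WL$; thus the $q$-saturation of the open set $W\cap G$ equals the open set $WL$, so $q$ carries open subsets of $G$ onto open subsets of $\varrho G/L$. (Alternatively, pass to completions and use that the quotient $G/(G\cap L)$ is again pseudocompact, hence $G_\delta$-dense in its completion, which forces the kernel of the extended map to be trivial.) Consequently $G/(G\cap L)\cong\varrho G/L$ is the required infinite, compact, metrizable quotient of $G$.

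\emph{Part (ii).} I would transcribe Theorem~\ref{CR2} through the duality. A closed subgroup $N$ of $\varrho G$ is $G_\delta$ exactly when $\varrho G/N$ is metrizable, i.e. when its dual $\widehat{\varrho G/N}\cong N^\perp$ (Lemma~\ref{Le:1}) is countable; so the closed $G_\delta$ subgroups of $\varrho G$ are precisely the groups $D^\perp$ with $D$ a countable subgroup of $G^*=\widehat{\varrho G}$. For such $N=D^\perp$, Lemma~\ref{Le:1} and Pontryagin duality give $\varrho G/N\cong\widehat{D_d}$, and under this identification the restriction $q|_G\colon G\to\varrho G/N$ becomes the map $r\colon G\to\widehat{D_d}$, $g\mapsto g\res_{D}$ (recall $G=(G^*)^*$). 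Hence the Comfort--Ross condition ``$G\cap xN\neq\varnothing$ for every $x\in\varrho G$'' is equivalent to ``$r$ is surjective''. Now $G=(G^*)^*$ separates the points of $G^*$, so $r(G)$ separates the points of $D$ and is a dense subgroup of $\widehat{D_d}$; by Theorem~\ref{Th:CRdual} it coincides with the group of continuous characters of $D$ equipped with the topology it inherits from $G^*$. Since the Bohr topology on $D$ is the finest precompact topology, it corresponds under Comfort--Ross duality to the whole group $\widehat{D_d}$, so $r(G)=\widehat{D_d}$ holds if and only if the subspace topology on $D$ is the Bohr topology. Letting $D$ range over all countable subgroups of $G^*$ yields the asserted equivalence.

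The step needing genuine care is the last one in part~(i): a continuous bijective homomorphism onto a compact group need not be open, so one must really use the $G_\delta$-density of $G$ (equivalently, that $G/(G\cap L)$ inherits pseudocompactness) in order to identify $G/(G\cap L)$ topologically with $\varrho G/L$. Everything else is routine manipulation of annihilators and of the $*$-duality already established; part~(ii) amounts to nothing more than reading Theorem~\ref{CR2} through that duality.
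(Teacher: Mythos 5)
Your argument is correct, but note that the paper offers no proof of this Fact at all: both items are quoted from the literature, (i) from \cite{LMT} and (ii) from \cite{HM}. What you have done is supply a self-contained derivation of both from Theorem~\ref{CR2} together with the $*$-duality machinery of Section~1, which is a legitimate and natural route (and, for (ii), essentially the spirit of the Hern\'andez--Macario argument). Two small points deserve to be made explicit. First, in part~(i) you invoke that $G$ meets the nonempty $G_\delta$ set $wL\cap W$, whereas Theorem~\ref{CR2} as stated only guarantees that $G$ meets cosets $xN$ of closed $G_\delta$ subgroups; you should add the standard remark that every nonempty $G_\delta$ subset of a topological group contains a coset of a closed $G_\delta$ subgroup (choose symmetric neighbourhoods $W_n$ with $W_{n+1}^2\subseteq W_n$ and $xW_n$ inside the $n$-th open set, and take $N=\bigcap_n W_n$), so the coset form does yield full $G_\delta$-density. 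Second, in part~(ii) the identification $r(G)=(D,\sigma)^*$, where $\sigma$ is the topology $D$ inherits from $G^*$, is not a direct consequence of Theorem~\ref{Th:CRdual}: the inclusion $r(G)\subseteq (D,\sigma)^*$ is clear, but the reverse inclusion needs the character extension property for subgroups of precompact abelian groups (which the paper records in the proof of Proposition~\ref{Prop:exact}); only then does Comfort--Ross duality convert ``$r$ surjective'' into ``$\sigma$ is the Bohr topology''. With these two clarifications your proof is complete and could stand in place of the citations.
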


By (i), infinite pseudocompact abelian groups $G$ admits infinite metrizable (and therefore separable) quotients, i.e., $\omega\in QW(G)$.
So we want to know whether pseudocompact abelian groups $G$ satisfies the strong property that $QW(G)=[\omega, w(G)]$?
In the following, we give a negative answer to this question.
We shall see the set $QW(G)$ for pseudocompact abelian groups can be quite arbitrary.
The idea of the proof is similar with that of Theorem \ref{Th:set1}.
While, it needs a more complicated construction.

\begin{theorem}\label{Th:set2} Let $E$, $\mu_\kappa$ be defined as in Theorem \ref{Th:set1}. We additionally assume that $\omega\notin E$.
Then there exists a monothetic, pseudocompact, free subgroup $K$ of $\T^{\mathfrak{c}}$ such that $QW(K)=\{\omega\}\cup E$ and for each $\kappa\in E$, $K$ has exactly $\mu_\kappa$ many quotient groups with weight $\kappa$.\end{theorem}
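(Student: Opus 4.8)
The plan is to follow the proof of Theorem~\ref{Th:set1}, but to enlarge the group built there so that it becomes pseudocompact without acquiring new quotients of uncountable weight. As in Theorem~\ref{Th:set1}, take $X=\bigsqcup_{\kappa\in E}\bigsqcup_{\alpha<\mu_\kappa}X_{\kappa,\alpha}$ with $|X_{\kappa,\alpha}|=\kappa$, write $\mathbb{Q}^{(\mathfrak{c})}\subseteq\T$ as a direct sum $\bigoplus_{\kappa,\alpha}Q_{\kappa,\alpha}\oplus\bigoplus_{\gamma<\mathfrak{c}}Q_\gamma$ of ``copies of itself'', and fix $f_{\kappa,\alpha}\colon F(X)\to Q_{\kappa,\alpha}$ with $\ker f_{\kappa,\alpha}=F(X_{\kappa,\alpha})$. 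I would let $K$ be the subgroup of $\widehat{F(X)_d}\cong\T^{|X|}$ generated by all the $f_{\kappa,\alpha}$ together with an auxiliary family $\{h_\gamma\colon\gamma<\mathfrak{c}\}$ to be constructed. Since $K\supseteq\hull{f_{\kappa,\alpha}\colon\kappa,\alpha}$ already separates the points of $F(X)$, $K$ is dense in $\widehat{F(X)_d}$, so $K^{*}$ has underlying group $F(X)$, and by Proposition~\ref{Prop:wei} one will have $w(K)=|F(X)|\le\mathfrak{c}$, so $K\le\T^{\mathfrak{c}}$.

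The role of the $h_\gamma$ is to force pseudocompactness via the Comfort--Ross criterion (Theorem~\ref{CR2}): being dense in $\widehat{F(X)_d}$, $K$ is pseudocompact exactly when for every countable subgroup $S\le F(X)$ and every $\phi\in\widehat{S_d}$ there is $\chi\in K$ with $\chi\res_S=\phi$. So I would enumerate all such pairs as $\{(S_\gamma,\phi_\gamma)\colon\gamma<\mathfrak{c}\}$ and, by transfinite recursion, choose $h_\gamma$ extending $\phi_\gamma$. At stage $\gamma$, letting $X^{0}_\gamma$ be the countable set of basis elements occurring in members of $S_\gamma$ and $F(X)=F(X^{0}_\gamma)\oplus V_\gamma$ with $V_\gamma=F(X\setminus X^{0}_\gamma)$, I would extend $\phi_\gamma$ to $\phi'_\gamma\colon F(X^{0}_\gamma)\to\T$ and set $h_\gamma=\phi'_\gamma\oplus\psi_\gamma$, picking $\psi_\gamma\colon V_\gamma\to\T$ ``generically'': injective, taking values in the fresh summand $Q_\gamma$, and $\mathbb{Q}$-independent of everything already used. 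The guiding idea is that then, for every $m\ne0$ and every $\mathbb{Z}$-combination $c$ of the previously available generators, the $Q_\gamma$-coordinate of $mh_\gamma+c$ is $m\psi_\gamma$, which forces $\ker(mh_\gamma+c)\subseteq F(X^{0}_\gamma)$ and hence to be countable; choosing $(S_0,\phi_0)$ so that $h_0$ is injective on $F(X)$ also makes $\hull{h_0}$ dense, so $K$ is monothetic, and the independence of the generators makes $K$ free. Carrying out this recursion — implementing the genericity simultaneously with independence, and coping with the tight case $|X|=\mathfrak{c}$, where the reservoir of ``fresh'' room inside $\T$ is scarce and the bookkeeping of which earlier characters meet $Q_\gamma$ must be controlled — is the technical heart of the argument and the step I expect to be the main obstacle; it is the finer analogue of Claims~1--3 of Theorem~\ref{Th:set1}.

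Granting the recursion, the quotient analysis runs parallel to Theorem~\ref{Th:set1}. Every nonzero $\mathbb{Z}$-combination of the generators involving some $h_\gamma$ has countable kernel, while a combination of two or more distinct $f_{\kappa,\alpha}$ (and no $h_\gamma$) has trivial kernel since the $Q_{\kappa,\alpha}$ are directly summed; hence the characters of $K$ with uncountable kernel are exactly the nonzero elements of the subgroups $\hull{f_{\kappa,\alpha}}$, whose kernels $F(X_{\kappa,\alpha})$ are genuinely uncountable because $\omega\notin E$. Thus if $N$ is a nontrivial proper closed subgroup of $K$ with $K/N$ infinite: either $N$ contains a character of countable kernel, and then $w(K/N)=|N^{\perp}|=\omega$ (by Proposition~\ref{Prop:wei}, with $N^{\perp}=\bigcap_{\chi\in N}\ker\chi\le F(X)$); or $N$ consists of $0$ and characters of uncountable kernel, which — two distinct $\hull{f_{\kappa,\alpha}}$ being independent — forces $N\subseteq\hull{f_{\kappa,\alpha}}$ for one $(\kappa,\alpha)$, and then $N$, closed also in $\hull{f_{\kappa,\alpha}}$, equals $\hull{f_{\kappa,\alpha}}$ by Claim~1 of Theorem~\ref{Th:set1}, giving $w(K/N)=|F(X_{\kappa,\alpha})|=\kappa\in E$. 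Conversely each $\hull{f_{\kappa,\alpha}}$ is closed in $K$: a $\chi\in K$ vanishing on the uncountable $F(X_{\kappa,\alpha})$ has uncountable kernel, so lies in some $\hull{f_{\kappa',\alpha'}}$ with $F(X_{\kappa',\alpha'})\supseteq F(X_{\kappa,\alpha})$, whence $(\kappa',\alpha')=(\kappa,\alpha)$ by disjointness of the $X_{\kappa,\alpha}$. Therefore $K$ has exactly $\mu_\kappa$ pairwise distinct quotients of each weight $\kappa\in E$ and no others of uncountable weight, while $\omega\in QW(K)$ because $K$ is infinite and pseudocompact (Fact~\ref{Fact}(i)); so $QW(K)=\{\omega\}\cup E$ with the prescribed multiplicities.
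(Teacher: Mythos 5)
Your overall strategy is the right one and matches the paper's: keep the group $G$ of Theorem \ref{Th:set1} and adjoin a $\mathfrak{c}$-sized auxiliary family of characters of $F(X)$ that (a) realizes every character of every countable subgroup of $F(X)$ (which, via Fact \ref{Fact}(ii)/Comfort--Ross, is exactly pseudocompactness of the resulting dense subgroup of $\widehat{F(X)_d}$), while (b) any $\Z$-combination of generators involving one of the auxiliary characters nontrivially has countable kernel, so that the only closed subgroups giving non-metrizable quotients are the old ones $\hull{f_{\kappa,\alpha}}$. Your closing quotient analysis (characters with uncountable kernel are exactly the nonzero multiples of the $f_{\kappa,\alpha}$ since $\omega\notin E$; hence every nontrivial closed $N$ either contains a countable-kernel character and $w(K/N)=|N^{\perp}|=\omega$, or equals some $\hull{f_{\kappa,\alpha}}$) is sound. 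The problem is that you leave the central construction -- the recursion producing the $h_\gamma$ -- as an acknowledged ``main obstacle,'' and the one concrete claim you make about it does not hold as stated: writing $h_\gamma=\phi'_\gamma\oplus\psi_\gamma$, the extension $\phi'_\gamma$ of an arbitrary character $\phi_\gamma$ of $S_\gamma$ takes values anywhere in $\T$, not in $\Q^{(\mathfrak{c})}$, and the same is true of the $\phi'$-parts of the earlier $h_{\gamma'}$ occurring in $c$. So ``the $Q_\gamma$-coordinate of $mh_\gamma+c$ is $m\psi_\gamma$'' is meaningless on all of $F(X)$, and the stated conclusion $\ker(mh_\gamma+c)\subseteq F(X^{0}_\gamma)$ is not what one can get. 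What one can and should prove is the weaker statement that $\ker(mh_\gamma+c)$ meets $F\bigl(X\setminus(X^0_\gamma\cup\bigcup_i X^0_{\gamma_i})\bigr)$ trivially -- a subgroup of countable index, whence the kernel is countable -- because on that subgroup all the $\phi'$-parts vanish and only the pairwise independent injective parts survive.

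Once this is repaired, the recursion, the ``genericity,'' and the worry about scarcity of fresh room in $\T$ when $|X|=\mathfrak{c}$ all evaporate; the paper's construction is the closed-form version of your plan. Enumerate the $\mathfrak{c}$ many \emph{countably supported} characters $\chi_\beta$ of $F(X)$ (every character of a countable subgroup $A\le F(Y)$, $Y$ countable, extends to one of these by divisibility of $\T$ followed by extension by zero off $Y$ -- this replaces your enumeration of pairs $(S_\gamma,\phi_\gamma)$); split a second copy $R\cong\Q^{(\mathfrak{c})}$, independent of the $Q$ used for $G$, as $R=\bigoplus_{\beta<\mathfrak{c}}R_\beta$ with each $R_\beta\cong\Q^{(\mathfrak{c})}$ (so there is no shortage of summands); choose $j_\beta\colon F(X)\to R_\beta$ with kernel exactly $F(Y_\beta)$, $Y_\beta$ the support of $\chi_\beta$; and set $h_\beta=\chi_\beta+j_\beta$. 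For $k=g+\sum_i m_ih_{\beta_i}$ with $g\in G$ and some $m_i\neq0$, restricting to $F(Z)$ with $Z=X\setminus\bigcup_iY_{\beta_i}$ kills every $\chi_{\beta_i}$, leaves $\sum_im_ij_{\beta_i}$ injective on $F(Z)$ with values in $R$, while $g(F(Z))\subseteq Q$ and $Q\cap R=\mathbf{0}$; hence $\ker k\cap F(Z)=\mathbf{0}$ and $\ker k$ is countable. This single computation simultaneously gives freeness of $K=G+H$, $G\cap H=\mathbf{0}$, and property (b) above, with no transfinite bookkeeping. So your proposal is not wrong in conception, but as written it defers precisely the step that carries the proof, and the sketched mechanism for that step needs the correction indicated.
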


\begin{proof}
We shall produce two dense subgroups $G$ and $H$ of some power ($\leq \mathfrak{c}$) of $\T$ with the following properties:
\begin{itemize}
\item[(i)] $G$ and $H$ are free and $G\cap H=\mathbf{0}$;
\item[(ii)] $H$ is pseudocompact;
\item[(iii)] $G$ is monothetic, $QW(G)=E$ and for each $\kappa\in E$, $G$ has exactly $\mu_\kappa$ many proper quotient groups with weight $\kappa$; and
\item[(iv)] for every closed subgroup $N$ of $K:=G+H$, either $K/N$ is metrizable or $N$ is contained in $G$.
\end{itemize}

If the construction has already been down, we shall see that $K$ is the desired group.
Firstly, (i) means that $K$ is free.
Secondly, since every topological space with a dense pseudocompact subspace is pseudocompact,
$K$, containing $H$ as a dense subgroup, is pseudocompact.
Similarly, the monothetic group $G$ is also a dense subgroup of $K$, so $K$ itself is monothetic as well.
Finally, it remains to check that $QW(K)=E\cup \{\omega\}$ and for $\kappa\in E$, $K$ has exactly $\mu_\kappa$ many quotient groups of weight $\kappa$.
Note that by Fact \ref{Fact} (i), $\omega\in QW(K)$.
So it suffices to prove the two sets, for each $\kappa\in E$,
$$A_{\kappa}:=\{M: M\mbox{~is~a~non-trivial~closed~subgroup~of~}K\mbox{~with~}w(K/M)=\kappa\}$$
and
$$B_{\kappa}:=\{N: N\mbox{~is~a~non-trivial~closed~subgroup~of~}K\mbox{~with~}w(G/N)=\kappa\}$$
coincide.
Since $\omega\notin E$, $\kappa\neq \omega$.
By (iv), if $M\in A_{\kappa}$, then $M\subseteq G$.
As a dense subgroup of $K/M$, $G/M$ has the same weight $\kappa$; thus $M\in B_\kappa$.
Conversely, assume that  $N\in B_{\kappa}$. Then the closure $M$ of $N$ in $K$ satisfies $w(K/M)=w(G/N)=\kappa$.
So $M\in A_{\kappa}$.
Again, $\kappa\neq \omega$, so (iv) implies that $M\subseteq G$, i.e., $M=N$.
Hence, $N\in A_{\kappa}$.
Now we are done.

\noindent{\textbf {The Construction.}}
Let $X_\kappa$, $X$, $F(X)$ and $F(X_\kappa)$ be defined as in the proof of Theorem \ref{Th:set1}.
Then $\widehat{F(X)}\cong \T^{|X|}$ and $|X|\leq \mathfrak{c}$.
Let also $P=Q\oplus R$, where $Q\cong R\cong \Q^{(\mathfrak{c})}$, and
$$Q=\bigoplus_{\kappa\in E}(\bigoplus_{\alpha<\mu_\kappa} Q_{\kappa, \alpha}),$$
where $R_{\kappa, \alpha}\cong \Q^{(\mathfrak{c})}$.

Repeat the construction in Theorem \ref{Th:set1}, for each $\kappa\in E$ and each $\alpha<\mu_\kappa$, we fix a group homomorphism $f_{\kappa, \alpha}: F(X)\to Q_{\kappa, \alpha}$ with kernel $F(X_{\kappa, \alpha})$.
The sequence of inclusions
$$Q_{\kappa, \alpha}\hookrightarrow Q\hookrightarrow P\hookrightarrow\T$$
allows us to identify those $f_{\kappa, \alpha}$ with characters of $F(X)$.
Let also $G$ be the subgroup of $\widehat{F(X)}$ generated by $\bigcup_{\kappa\in E}\{f_{\kappa, \alpha}:\alpha<\mu_\kappa\}$.
As in shown in Theorem \ref{Th:set1}, $G$ is free and satisfies (iii).

Now let us define the other subgroup $H$ of $\widehat{F(X)}$.
We continue splitting $R$ into the form $R=\bigoplus_{\beta<\mathfrak{c}} R_\beta$, where each $R_\beta$ is again a copy of $\Q^{(\mathfrak{c})}$.
Note that $|X|\leq \mathfrak{c}$, so $X$ has exactly $\mathfrak{c}$ many countable subsets.
A character $\chi$ of $F(X)$ is called {\em countably supported} if $\chi(x)=0$ for all but countably many $x\in X$.
It is clear that $F(X)$ admits exactly $\mathfrak{c}$ many countably supported characters.
Let us denote by $\{\chi_\beta: \beta<\mathfrak{c}\}$ the set of all countably supported characters of $F(X)$ and by $Y_\beta$ the set $\{x\in X: \chi_\beta(x)\neq 0\}$ for each $\beta<\mathfrak{c}$.
Obviously, $Y_\beta$ runs over all countable subsets of $X$.
For each $\beta<\mathfrak{c}$, we take a homomorphism $j_\beta: F(X)\to R_\beta$ with kernel $F(Y_\beta)$, the subgroup of $F(X)$ generated by $Y_\beta$.
Each $j_\beta$ is of course a character of $F(X)$ since $R_\beta\hookrightarrow R\hookrightarrow P\hookrightarrow\T$.
Now let $h_\beta=\chi_\beta+j_\beta$ and $H=\hull{h_\beta: \beta<\mathfrak{c}}$.
For each $0\neq x\in F(X)$, there exists a finite subset $Y$ of $X$ such that $x$ in contained in the subgroup $F(Y)$ generated by $Y$.
Take $\beta<\mathfrak{c}$ such that $Y\cap Y_\beta=\emptyset$, then
$$h_\beta(x)=\chi_\beta(x)+j_\beta(x)=j_\beta(x),$$
as $\chi_\beta$ vanishes on $F(X\setminus Y_\beta)\supseteq F(Y)$.
Moreover, since the kernel of $j_\beta$ is $F(Y_\beta)$, which trivially meets $F(Y)$, one has that $j_\beta(x)\neq 0$.
Thus, $H$ separates the points of $F(X)$ and hence $H$ is a dense subgroup of $\widehat{F(X)}$, according to Proposition \ref{Prop:May} (i).

Let $h\in H$ with the form $h=m_1 h_{\beta_1}+m_2 h_{\beta_2}+...+m_n h_{\beta_n}\in H$, where $m_1, m_2, ..., m_n$ are non-zero integers and $\beta_1,\beta_2,...,\beta_n<\mathfrak{c}$.
Denote the subset $X\setminus (\bigcup_{i=1}^nY_{\beta_1})$ of $X$ by $Z$.
The subgroup of $F(X)$ generated by $Z$ is $F(Z)$.

\vspace{0.2cm}
\noindent{\textbf {Claim 1.}} $h(a)\neq 0$ for each $0\neq a\in F(Z)$.
In particular, $H$ is a free abelian group with basis $\{h_\beta:\beta<\mathfrak{c}\}$.

\begin{proof}[Proof of Claim 1.]

Note that $Z\subset X\setminus Y_{\beta_i}$ for $1\leq i\leq n$.
Since $\chi_{\beta_i}(X\setminus Y_{\beta_i})=\{0\}$, one has $\chi_{\beta_i}(F(Z))=\{0\}$.
Hence, for each $0\neq a\in F(Z)$,
$$m_i h_{\beta_i}(a)=m_i\chi_{\beta_i}(a)+m_ij_{\beta_i}(a)=m_ij_{\beta_i}(a)\in R_{\beta_i}.$$
Since the sum $R_{\beta_1}+R_{\beta_2}+...+R_{\beta_n}$ is direct,
$$h(a)=m_1h_{\beta_1}(a)+m_2h_{\beta_2}(a)+...+m_nh_{\beta_n}(a)=0$$
only if
$m_i h_{\beta_i}(a)=m_ij_{\beta_i}(a)=0$ for each $i=1,2,...,n$.
However, none of these elements is equal to $0$.
Indeed, for a given $i$, the kernel of $j_{\beta_i}$ is $F(Y_{\beta_i})$, which trivially meets $F(Z)$.
So $j_{\beta_i}(a)\neq 0$ and hence $m_ih_{\beta_i}(a)\neq 0$.

In summary, we have proved that $h(a)\neq 0$ and particularly $h\neq 0$.
Then the second part of the claim follows.
\end{proof}
Assume that $k\in G\cap H$.
Then by our construction, $$k(F(X))\subseteq Q\cap R=\mathbf{0}.$$
This implies that $k=0$ and hence $G\cap H=\mathbf{0}$.
So (i) is proved.

Now let us prove (ii):

\vspace{0.2cm}
\noindent{\textbf {Claim 2.}} $H$ is pseudocompact.
\begin{proof}[Proof of Claim 2.] According to Fact \ref{Fac} (ii), we have to check that every countable subgroup of $H^*$ carries the Bohr topology.
Note that $H^*$ is exactly the group $F(X)$ with the coarest topology making all $h\in H$ continuous.
Thus, it suffices to show that for every countable subgroup $A$ of $F(X)$ and every homomorphism $\chi: A\to \T$, there exists $h\in H$ extending $\chi$.

Since $A\subseteq F(X)$ is countable and $F(X)$ is the free abelian group over $X$, there exists a countable subset $Y$ such that $A$ is contained in $F(Y)$, the subgroup generated by $Y$.
As $\T$ is divisible, $\chi$ admits an extension $\chi'$ over $F(Y)$.
The mapping of $X$ to $\T$ sending $x$ to $\chi'(x)$ for $x\in Y$ and to $0$ for $x\in X\setminus Y$
induces a countably supported character of $F(X)$.
In other words, there exists $\beta<\mathfrak{c}$ such that the restriction of $\chi_\beta$ to $Y$ is $\chi'\res_Y$ and $Y_\beta\supseteq Y$.
Since $j_\beta$ vanishes on $Y_\beta$, one has $$h_\beta\res_Y=(\chi_\beta+j_\beta)\res_Y=\chi_\beta\res_Y=\chi'\res_Y.$$
This implies that $h_\beta\res_{F(Y)}=\chi'\res_{F(Y)}.$
So $$h_\beta\res_A=\chi'\res_A=\chi.$$
The above equality completes the proof.
\end{proof}

The property (iv) follows from the next claim immediately.

\vspace{0.2cm}
\noindent{\textbf {Claim 3.}} If $N$ is a closed subgroup of $K$ which in not contained in $G$, then $K/N$ is metrizable.
\begin{proof}[Proof of Claim 3.]
Let $N$ be such a subgroup. Then $N$ contains an element $k=g+h$ with $g\in G$ and $0\neq h\in H$.
Then
$$h=m_1h_{\beta_1}+m_2h_{\beta_2}+...+m_bh_{\beta_n}$$
for some non-zero integers $m_1$, $m_2$,...,$m_n$ and cardinals $\beta_1,\beta_2,...,\beta_n$ less than $\mathfrak{c}$.
Retain the symbols in the paragraph before Claim 1, and take a non-zero element $a\in F(Z)$.
Then $h(a)\neq 0$, according to Claim 1.
Note that $h(a)\in R$ and $g(a)\in Q$ with $Q\cap R=\mathbf{0}$, the above inequality implies that $k(a)=h(a)+g(a)\neq 0$.
Thus, $\ker k\cap F(Z)=\mathbf{0}$.
Since $F(Z)$ has countable index in $F(X)$, $\ker k$ is also countable.
By lemma \ref{Le:1}, $\widehat{F(X)}/\overline{\hull{k}}$ is topologically isomorphic to $\widehat{\ker k}$; so it is metrizable.
Then $\widehat{F(X)}/\overline{N}$ is also metrizable as a quotient group, and so is its dense subgroup $K/N$, where $\overline{N}$ is the closure of $N$ in $\widehat{F(X)}$.\end{proof}
In summary, the groups $G$ and $H$ satisfy (i)--(iv).
\end{proof}

One may wonder what is the number of infinite metrizable quotient groups of $K$ in the above theorem.
The following result provides the answer: the number is $\mathfrak{c}$.
\begin{proposition}Every  pseudocompact abelian group $G$ with uncountable weight has exactly $w(G)^\omega$ many infinite metrizable quotient groups.\end{proposition}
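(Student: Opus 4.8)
The plan is to count metrizable quotients of a pseudocompact abelian group $G$ by translating everything through the Comfort-Ross duality. A metrizable quotient corresponds to a closed $G_\delta$ subgroup $N$ of the completion $\varrho G = \widehat{G^*_d}$ with $G$ meeting every coset (by pseudocompactness, $G$ is $G_\delta$-dense, so quotients of $G$ by $\overline N\cap G$ have the same weight as $\varrho G/\overline N$). Concretely, infinite metrizable quotient groups of $G$ correspond bijectively, up to topological isomorphism, to infinite metrizable quotient groups of $\varrho G$, which by Pontryagin duality correspond to countably infinite subgroups of $\widehat{G^*_d} \widehat{\phantom{x}} = G^*$... more precisely to countably infinite subgroups $A$ of the discrete group $G^*$ (the annihilator of $N$), with the quotient being $\widehat{A}$. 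So the first step is to establish that the number of infinite metrizable quotient groups of $G$ equals the number of countably infinite subgroups of $G^*_d$.

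The second step is the cardinal arithmetic: a discrete abelian group $K$ of cardinality $w(G)$ (recall $|G^*| = w(G)$ by Proposition~\ref{Prop:wei}) has exactly $|K|^\omega = w(G)^\omega$ countably infinite subsets, hence at most that many countably infinite subgroups; and one must produce $w(G)^\omega$ of them. Here I would use that, by Fact~\ref{Fact}(ii), pseudocompactness of $G$ means every countable subgroup of $G^*$ carries the Bohr topology, so \emph{every} countably infinite subgroup $A$ of $G^*$ does give a legitimate metrizable quotient $\widehat{A}$ of $\varrho G$ that is met $G_\delta$-densely by $G$; thus it suffices to count countably infinite subgroups of $G^*_d$. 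For the lower bound, since $w(G)$ is uncountable, $|G^*|\geq\omega_1$; pick a subset $S$ of $G^*$ of cardinality $w(G)$ consisting of... one must be slightly careful since distinct countable subsets can generate the same subgroup, but a standard pigeonhole argument (a fixed countable subgroup has only countably many countable subsets, so generating the same subgroup is a $\leq\!\omega$-to-$w(G)^\omega$ collapse when $w(G)^\omega = (w(G)^\omega)\cdot\omega$) gives $w(G)^\omega$ distinct subgroups. One clean way: fix an independent family or just note $G^*$ contains an infinite direct sum $\bigoplus_{\xi<w(G)}\langle c_\xi\rangle$ of nontrivial cyclic groups (using that infinite abelian groups contain such — or pass to a suitable subgroup), and then for each countably infinite $T\subseteq w(G)$ the subgroup $\bigoplus_{\xi\in T}\langle c_\xi\rangle$ is countably infinite, and distinct $T$ give distinct subgroups, yielding exactly $[w(G)]^\omega = w(G)^\omega$ many.

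The third step is to confirm the bijection between abstract-isomorphism types is fine for \emph{counting}: we are counting metrizable quotient groups, i.e.\ pairs (quotient map, quotient group) up to equivalence, equivalently closed subgroups $N$ of $\varrho G$ of the relevant type, equivalently their annihilators, so no over- or under-counting occurs. Combining, the number of infinite metrizable quotients of $G$ is the number of countably infinite subgroups of $G^*_d$, which is $w(G)^\omega$.

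The main obstacle I anticipate is the bookkeeping in the lower bound: ensuring that one genuinely gets $w(G)^\omega$ \emph{distinct} subgroups (not merely subsets) of $G^*$ and that each yields a genuinely distinct metrizable quotient of $G$ itself (not just of $\varrho G$) — this is where $G_\delta$-density of $G$ in $\varrho G$ (Theorem~\ref{CR2}) and Fact~\ref{Fact}(ii) do the real work, guaranteeing that passing from $\varrho G$ to $G$ neither identifies distinct quotients nor destroys metrizability. The upper bound and the cardinal arithmetic $|K|^\omega$ for $|K| = w(G)$ uncountable are routine.
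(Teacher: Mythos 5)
Your overall route is the paper's: reduce, via the Comfort--Ross duality, to counting countably infinite subgroups of $G^*$ (whose cardinality is $w(G)$ by Proposition~\ref{Prop:wei}), and invoke Fact~\ref{Fact}(ii) to make the count come out right. But the one substantive step of the paper's proof is precisely the step you assert rather than prove, namely that \emph{every countable subgroup of $G^*$ is closed in $G^*$}. This is not bookkeeping. Infinite metrizable quotients of $G$ correspond to \emph{closed} countably infinite subgroups of $G^*$: if $A\leq G^*$ is countable with annihilator $N=A^\perp$ in $\varrho G$, then the quotient $G/(N\cap G)$ is dual to the closure of $A$ in $G^*$, not to $A$ itself, so its weight is $|\overline{A}|$. $G_\delta$-density of $G$ only gives a continuous bijection $G/(N\cap G)\to\varrho G/N$; by Proposition~\ref{Prop:exact} this map is open exactly when $A$ is closed in $G^*$. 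If some countable $A$ failed to be closed, the corresponding quotient of $G$ could be non-metrizable, and distinct countable subgroups with the same closure would produce the same quotient of $G$ --- which would break the upper bound and, more importantly, your lower bound (your subgroups $\bigoplus_{\xi\in T}\hull{c_\xi}$ must have pairwise distinct closures, not merely be pairwise distinct).

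The missing argument is short and is where Fact~\ref{Fact}(ii) is actually used: if $H\leq G^*$ is countable and $g\in G^*$ lies in the closure of $H$, then $H_1=H+\hull{g}$ is countable, hence carries the Bohr topology; in the Bohr topology every subgroup is closed, so $H$ is closed in $H_1$ while also dense in it, whence $H=H_1$ and $g\in H$. Thus all countable subgroups of $G^*$ are closed, and the count is the number of all countably infinite subgroups of $G^*$. Your cardinal arithmetic for that number is fine and in fact more explicit than the paper's (which simply asserts it); just note that the blanket claim that every infinite abelian group contains an infinite direct sum of nontrivial cyclic subgroups is false (e.g.\ Pr\"ufer groups), and what saves you is that $|G^*|=w(G)$ is uncountable, so $r(G^*)=|G^*|$ by \cite[Proposition 9.9.20]{AT} and an independent family of size $w(G)$ exists.
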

\begin{proof}Let $\tau=w(G)$.
It suffices to prove that $G^*$ has exactly $\tau^\omega$ many closed countable subgroups.
Since $G^*$ has $\tau^\omega$ many countable subgroups, we shall complete the proof by showing every countable subgroup of $G^*$ is closed.
Let $H$ be a countable subgroup of $G^*$ and $g\in G^*$ be an element in the closure of $H$.
The the subgroup $H_1=H+\hull{g}$ is countable as well.
By Fact \label{Fac} (ii), the topology on $H_1$ inherited from $G^*$ is the Bohr topology.
Thus, $H$ is closed in $H_1$.
As also a dense subgroup of $H_1$, $H$ must coincides with $H_1$.
So $g\in H$.
In other words, $H$ is closed.
\end{proof}

\section{Comments and Problems}\label{Sec4}

Throughout this paper, we discuss the sets $QW(G)$ and $QD(G)$ for precompact abelian groups.
The result of Leiderman, Morris and Tkachenko shows that there exists a precompact abelian group $G$ such that $QD(G)\neq \varnothing$ and $\omega\notin QD(G)$ (so $\omega\notin QW(G))$.
Following this line, we find that under the assumption of $2^{<\mathfrak{c}}=\mathfrak{c}$, there exists a precompact abelian groups $G$ and $H$ such that $QD(G)\neq \varnothing\neq QW(H)$ and $$[\omega, \mathfrak{c})\cap QD(G)=\varnothing=[\omega, 2^{\mathfrak{c}})\cap QW(H).$$

However, the author does not know whether this remains valid in $\mathbf{ZFC}$ without any other set-theoretic assumption.

\begin{question} Do $\mathfrak{m}=\mathfrak{c}$ and $\mathfrak{n}=2^{\mathfrak{c}}$ hold in $\mathbf{ZFC}$?\end{question}

In Theorem \ref{Th:set1}, it is shown that the set $QW(G)$ can be rather arbitrary.
So it is natural to consider the same problem for $QD(G)$.
One may ask can we replace $QW(G)$ by $QD(G)$ in Theorem \ref{Th:set1}?
The question divides into two parts:
\begin{itemize}
\item [(i)] does every subset of $[\omega, \mathfrak{c}]$ equals $QD(G)$ for some precompact abelian group $G$?
\item [(ii)] if $QD(G)$ is a subset of $[\omega, \mathfrak{c}]$ and $\tau\in QW(G)$, can the cardinality of the set of quotient groups $K$ of $G$ with $d(K)=\tau$ be an arbitrary cardinal $\leq \mathfrak{c}$?
\end{itemize}

We shall see that the answers to both the two questions are ``no''.
To see this, let us first introduce an lemma which is ``folklore''.

\begin{lemma}\label{Le:Nov6}Let $G$ be a topological group and $N$ a closed subgroup.
If $X$ is a subset of $G$ such the image of $X$ in the coset space $G/N$ is dense and $Y$ is a dense subset of $N$, then $XY$ is a dense subset of $G$.
In particular, $d(G)\leq d(N)d(G/N)$.\end{lemma}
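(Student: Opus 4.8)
The plan is to establish density of $XY$ directly from the definitions, the only external ingredient being the standard fact that for any subgroup $N$ of a topological group $G$ the canonical map $p\colon G\to G/N$ onto the coset space is continuous and \emph{open}; the ``in particular'' clause is then a one-line cardinality count.

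First I would fix an arbitrary nonempty open set $W\subseteq G$ and aim to produce $x\in X$ and $y\in Y$ with $xy\in W$. Since $p$ is open, $p(W)$ is a nonempty open subset of $G/N$, so density of $p(X)$ in $G/N$ furnishes $x\in X$ with $p(x)\in p(W)$; this says precisely that $xN\cap W\neq\varnothing$, equivalently that $x^{-1}W\cap N\neq\varnothing$. Because left translation by $x^{-1}$ is a homeomorphism of $G$, the set $x^{-1}W$ is open, so $x^{-1}W\cap N$ is a nonempty open subset of $N$ in the subspace topology. Now density of $Y$ in $N$ gives $y\in Y$ with $y\in x^{-1}W$, i.e.\ $xy\in W$. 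As $xy\in XY$, this shows $XY\cap W\neq\varnothing$, and since $W$ was arbitrary, $XY$ is dense in $G$.

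For the final inequality I would pick a dense subset $D\subseteq G/N$ with $|D|=d(G/N)$ and, for each $d\in D$, a point of $p^{-1}(d)$; the resulting set $X$ satisfies $p(X)=D$ (hence $p(X)$ dense) and $|X|\le d(G/N)$. Taking also a dense subset $Y\subseteq N$ with $|Y|=d(N)$, the first part produces the dense subset $XY$ of $G$ with $|XY|\le|X|\,|Y|\le d(N)\,d(G/N)$, so $d(G)\le d(N)\,d(G/N)$.

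The only point that deserves attention is the openness of $p\colon G\to G/N$, and even that is immediate: for open $U\subseteq G$ one has $p^{-1}(p(U))=UN=\bigcup_{n\in N}Un$, a union of open sets, hence $p(U)$ is open in $G/N$. Note that $N$ is not assumed normal, so $G/N$ is only a homogeneous coset space, but that is all the argument uses; everything else is routine neighbourhood-chasing, so I do not anticipate a genuine obstacle.
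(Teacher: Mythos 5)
Your proof is correct and follows essentially the same route as the paper's: both arguments use openness of the projection $G\to G/N$ to find $x\in X$ with $xN$ meeting the given open set, then use density of $Y$ in $N$ (via a translation homeomorphism) to land $xy$ in that open set, and both obtain the cardinality bound by choosing $X$ as a set of coset representatives of a minimal dense subset of $G/N$. No gaps.
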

\begin{proof}Let $U$ be a non-empty open set in $G$.
Since the image of $X$ in $G/N$ is dense, $U\cap XN\neq \varnothing$.
So there exists $x\in X$ such that $U\cap xN$ is an non-empty open subset of $xN$.
Thus, $U\cap xN$ intersects the dense subset $xY$ of $xN$, i.e., $U\cap XY\neq \varnothing$.

To see the second assertion, one may assume that $Z$ is a dense subset of $G/N$ such that $d(G/N)=Z$ and let $X$ be a subset of $G$ such that $\pi(X)=Z$ and $\pi\res_X$ is injective, where $\pi:G\to G/N$ is the canonical projection.
Then the above argument ensures that $XY$ is dense in $G$ for each dense subset $Y$ of $N$.
Particularly, $Y$ can be chosen to have cardinality $d(N)$.
Thus $d(G)\leq |XY|\leq |X||Y|=d(G/N)d(N)$.\end{proof}

\begin{proposition} Let $G$ be a precompact abelian group with $d(G)=\tau>\omega$.
Then there exists $\tau$ many quotient groups $K$ of $G$ such that $d(K)=\tau$.\end{proposition}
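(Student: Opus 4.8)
The plan is to exploit the inequality $d(G)\le d(N)\,d(G/N)$ from Lemma~\ref{Le:Nov6}: it shows that \emph{every} closed subgroup $N$ of $G$ with $d(N)<\tau$ automatically satisfies $d(G/N)=\tau$. Indeed, one always has $d(G/N)\le d(G)=\tau$; and since $\tau$ is an infinite cardinal with $d(N)<\tau$, the product $d(N)\,d(G/N)$ is $<\tau$ whenever $d(G/N)<\tau$, which by Lemma~\ref{Le:Nov6} is impossible. Hence it suffices to produce $\tau$ pairwise distinct closed subgroups of $G$ of density $<\tau$.

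I would build these as a strictly increasing chain $(N_\alpha)_{\alpha<\tau}$ by transfinite recursion. At stage $\alpha<\tau$, with $d_\beta\in G$ already chosen for $\beta<\alpha$, put $S_\alpha=\langle d_\beta:\beta<\alpha\rangle$ and $M_\alpha=\overline{S_\alpha}$. Since $|S_\alpha|\le|\alpha|+\omega<\tau=d(G)$, the subgroup $S_\alpha$ cannot be dense, so $M_\alpha\neq G$ and one may choose $d_\alpha\in G\setminus M_\alpha$; set $N_\alpha=\overline{\langle d_\beta:\beta\le\alpha\rangle}$. Then $M_\alpha\subseteq N_\alpha$ while $d_\alpha\in N_\alpha\setminus M_\alpha$, so $M_\alpha\subsetneq N_\alpha$, and since $M_{\alpha+1}=N_\alpha$ this forces $N_\alpha\subsetneq N_{\alpha+1}$ for every $\alpha$; together with monotonicity, the chain is strictly increasing, so the $N_\alpha$ ($\alpha<\tau$) are $\tau$ distinct closed subgroups. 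Each $N_\alpha$ has the dense subgroup $\langle d_\beta:\beta\le\alpha\rangle$ of cardinality $\le|\alpha|+\omega<\tau$, hence $d(N_\alpha)<\tau$, so by the first paragraph $d(G/N_\alpha)=\tau$. The quotient maps $G\to G/N_\alpha$ have pairwise distinct kernels, giving $\tau$ many distinct quotient groups of $G$, each of density $\tau$ (and each $N_\alpha$ is non-trivial and proper, since $\{0\}\subsetneq N_0\subseteq N_\alpha\subsetneq G$).

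The only delicate point is to insist that the chain of kernels be \emph{strictly} increasing of full length $\tau$, rather than merely non-decreasing: for singular $\tau$ a non-decreasing $\tau$-chain of subgroups might take only $\operatorname{cf}(\tau)<\tau$ distinct values. The recursive choice of $d_\alpha$ outside the current closure $M_\alpha$ is exactly what rules this out, and that choice is always available because a subgroup generated by fewer than $d(G)$ elements is never dense. Everything else is routine; in fact the argument uses nothing about $G$ beyond Lemma~\ref{Le:Nov6} and works for any topological group $G$ with $d(G)>\omega$.
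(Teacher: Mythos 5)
Your argument is correct and is essentially the paper's own proof: both build, by transfinite recursion, a strictly increasing chain of $\tau$ closed subgroups $N_\alpha$ each having a dense subgroup generated by fewer than $\tau$ elements (choosing at each stage an element outside the closure of the subgroup generated so far, which is possible precisely because $d(G)=\tau$), and both then invoke Lemma~\ref{Le:Nov6} via $d(G)\le d(N_\alpha)\,d(G/N_\alpha)$ to conclude $d(G/N_\alpha)=\tau$. Your explicit remark about strictness of the chain (relevant for singular $\tau$) is a point the paper leaves implicit, but the underlying construction already guarantees it there as well.
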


\begin{proof}Note that since $d(G)=\tau$, every subset of $G$ with cardinality $<\tau$ generates a dense subgroup of a proper closed subgroup of $G$.
So, one may define by induction a subset $\{x_\alpha:\alpha<\tau\}$ of $G$ such that $x_\alpha$ is not in the closure $N_\alpha$ of $\hull{\{x_\beta:\beta<\alpha\}}$.
The inequality $d(N_\alpha)\leq \alpha+\omega<\tau$ holds for each $\alpha<\tau$ since $N_\alpha$ has a dense subgroup generated by $\alpha$ many elements.
Then by Lemma \ref{Le:Nov6}, each $G/N_\alpha$ has density $\tau$.\end{proof}

The proposition immediately implies the following facts.
\begin{corollary} If $E$ is a set of cardinalities such that $E=QD(G)$ for some precompact abelian group $G$, then $E$ has the largest possible element $d(G)$, so $\sup E\in E$.\end{corollary}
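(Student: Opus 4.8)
The plan is to reduce the statement to the preceding Proposition by splitting into two cases according to whether $d(G)$ is countable, using in both cases only the elementary fact that density cannot increase under a continuous surjection.

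\emph{Case $d(G)=\omega$.} Here every infinite proper quotient $K=G/N$ of $G$ is a continuous surjective (indeed open) image of $G$, so the image of a dense subset of $G$ is a dense subset of $K$ and hence $d(K)\le d(G)=\omega$; conversely $K$ is infinite and Hausdorff, so a finite subset of $K$ is closed and therefore not dense, which forces $d(K)\ge\omega$. Thus $d(K)=\omega$ for every such $K$, i.e. $QD(G)\subseteq\{\omega\}$. If $E=QD(G)$ is nonempty then $E=\{\omega\}$, whose largest element is $\omega=d(G)$, and we are done.

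\emph{Case $d(G)=\tau>\omega$.} The same monotonicity argument shows that every $\kappa\in QD(G)$ satisfies $\kappa\le\tau$, so $\tau$ is an upper bound for $E$. For the reverse inclusion I would invoke the preceding Proposition, which supplies quotient groups $K$ of $G$ with $d(K)=\tau$; since $d(K)=\tau>\omega$ these $K$ are infinite, and since they arise as $G/N_\alpha$ with $N_\alpha$ a nontrivial proper closed subgroup (nontrivial for $\alpha\ge 1$, and never all of $G$ because the point $x_\alpha$ lies outside it) they are proper quotients. Hence $\tau\in QD(G)=E$, so $\sup E=\tau\in E$, and this maximal element coincides with $d(G)$.

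I do not expect a genuine obstacle here: all the real work is already contained in the preceding Proposition. The only points that require a little care are the book-keeping at the edges --- the empty case $E=\varnothing$ (which is vacuous, or may simply be excluded by hypothesis) and the verification that the quotients produced by the Proposition really do qualify as \emph{infinite proper} quotients in the sense used to define $QD(G)$.
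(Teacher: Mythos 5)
Your proof is correct and follows exactly the route the paper intends: the paper gives no separate argument, stating only that the preceding Proposition ``immediately implies'' the corollary, and your two cases (monotonicity of density under open quotient maps for the upper bound, plus the Proposition's quotients $G/N_\alpha$ with nontrivial proper kernel to witness $d(G)\in E$ when $d(G)>\omega$, and the trivial observation that $QD(G)\subseteq\{\omega\}$ when $d(G)=\omega$) are precisely the intended filling-in of that implication. Your remark about the empty case is a fair caveat, since for topologically simple $G$ one has $QD(G)=\varnothing$ and the statement must be read as tacitly assuming $E\neq\varnothing$.
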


\begin{corollary} If $G$ is a precompact abelian group and $\omega\neq \tau\in QD(G)$, then $G$ has at least $\tau$ many quotient groups with density $\tau$.\end{corollary}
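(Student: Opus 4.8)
The plan is to reduce the statement immediately to the Proposition proved just above. Since $\tau\in QD(G)$ and $\tau\neq\omega$, the cardinal $\tau$ is uncountable, and there is a closed subgroup $N$ of $G$ such that $G/N$ is an infinite proper quotient with $d(G/N)=\tau$. The quotient $G/N$ is again a precompact abelian group, and it has density $\tau>\omega$, so the Proposition applies to it directly: $G/N$ admits $\tau$ many quotient groups of density $\tau$.

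The second step is to transfer these quotients back to $G$. The closed subgroups of $G/N$ are exactly the groups of the form $M/N$ with $M$ a closed subgroup of $G$ containing $N$, and for each such $M$ there is a topological isomorphism $(G/N)/(M/N)\cong G/M$. Hence the $\tau$ many quotients of $G/N$ furnished by the Proposition are realized by $\tau$ many pairwise distinct closed subgroups $M\supseteq N$ of $G$, and for each of them $d(G/M)=\tau$. Since $G/N$ is a proper quotient we have $N\neq\{0\}$, so every such $M$ is nontrivial and $G/M$ is an infinite proper quotient of $G$; this exhibits the required $\tau$ many quotient groups of $G$ with density $\tau$.

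I do not expect a genuine obstacle here: all the real work is in the preceding Proposition, and what remains is the standard fact that a quotient of a quotient is again a quotient, together with the observation that distinct closed subgroups of $G/N$ correspond to distinct closed subgroups of $G$ containing $N$. The only point that deserves a word of care is that, exactly as in the Proposition, ``quotient group'' is counted by closed subgroups, so that the $\tau$ many subgroups $M$ do yield $\tau$ many quotients and not merely $\tau$ many isomorphism types.
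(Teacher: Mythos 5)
Your proposal is correct and follows exactly the route the paper intends: the paper states the corollary as an immediate consequence of the preceding Proposition, and your reduction (apply the Proposition to a proper quotient $G/N$ with $d(G/N)=\tau>\omega$, then use the correspondence between closed subgroups of $G/N$ and closed subgroups of $G$ containing $N$ together with $(G/N)/(M/N)\cong G/M$) is precisely that argument, spelled out. The extra care about properness ($M\supseteq N\neq\{0\}$) and about counting quotients via distinct closed subgroups is appropriate and consistent with how the Proposition itself counts.
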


The following problem remains open:

\begin{problem}Characterize subsets $E$ of $[\omega, \mathfrak{c}]$ such that $E=QD(G)$ for some precompact abelian group $G$.\end{problem}

Let us come back to the separable quotient problem.
We say a topological group $G$ satisfying \emph{separable quotient theorem} (resp. \emph{metrizable quotient theorem}) if $G$ has an infinite quotient group which is separable (resp. metrizable).
In the case $G$ is precompact, metrizable quotient theorem implies separable quotient theorem.
We have known that pseudocompact groups satisfies metrizable quotient theorem, but in general, precompact abelian groups may fail to satisfy even separable quotient theorem.

Another important class of precompact abelian groups is the class of \emph{minimal} abelian groups \cite{DPS}. A topological group $G$ is called minimal if every continuous bijective homomorphism $\varphi: G\to H$ of topological groups is open, i.e., is a topological isomorphism.
Evidently, totally minimal groups are exactly topological groups with all quotient groups minimal.
It is known as the Prodanov-Stoyanov Theorem \cite{DPS} that every minimal abelian group is precompact.
We raise the separable (metrizable) quotient problem for minimal abelian groups.
\begin{question}\label{ques} Do minimal abelian groups satisfy separable (metrizable) quotient theorem?\end{question}
Note that we have provided a partial answer in the section of introduction that totally minimal abelian groups satisfies both the two theorems.

Now let us give a negative answer to the metrizable part of Question \ref{ques}, while, the rest part remains open.
\begin{lemma}\label{Le:mini}Let $G$ be a precompact abelian group, then
\begin{itemize}
  \item [(i)] $G$ is minimal if and only if $G^*$ contains no proper dense subgroup;
  \item [(ii)] $G$ is totally minimal if and only if every subgroup of $G^*$ is closed.
\end{itemize}
\end{lemma}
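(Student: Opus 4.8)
The statement to prove is Lemma~\ref{Le:mini}, characterizing minimality and total minimality of a precompact abelian group $G$ in terms of its Comfort--Ross dual $G^*$.

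\medskip

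\textbf{Plan of proof.} The whole lemma is a straightforward transcription of the defining properties of ``minimal'' and ``totally minimal'' through the Comfort--Ross duality, so the plan is to unwind the definitions on both sides and match them up using the results already established in the excerpt.

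First I would prove (i). Recall that $G$ is minimal if every continuous bijective homomorphism $\varphi\colon G\to H$ onto a (Hausdorff) topological group is open, i.e.\ is a topological isomorphism. By Proposition~\ref{Prop:1.9}, a morphism $\varphi\colon G\to H$ in $\mathbf{PCA}$ is a continuous isomorphism exactly when its dual $\varphi^*\colon H^*\to G^*$ is a topological group embedding with dense image; and such a $\varphi$ fails to be a topological isomorphism precisely when $\varphi^*$ fails to be onto, i.e.\ when $\varphi^*(H^*)$ is a \emph{proper} dense subgroup of $G^*$. Conversely, given any proper dense subgroup $D$ of $G^*$, the Comfort--Ross duality (Theorem~\ref{Th:CRdual}) says $D$ carries a precompact topology whose C--R dual is a dense subgroup of $\widehat{(G^*)_d}$ contained in $G$; equivalently, the inclusion $D\hookrightarrow G^*$ dualizes to a continuous bijective homomorphism $G\to D^*$ which is not a topological isomorphism, witnessing non-minimality. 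Thus $G$ is minimal if and only if $G^*$ has no proper dense subgroup. I would phrase this carefully: every continuous bijective homomorphism out of $G$ factors (up to topological isomorphism on the target) through the identity map from $G$ onto $G$ with a coarser precompact topology, and the coarser precompact topologies on the underlying group of $G$ correspond bijectively, via Theorem~\ref{Th:CRdual}(c), to the dense subgroups of $G^*$ contained in $G^*$ itself --- with the original topology corresponding to $G^*$ and strictly coarser ones to proper dense subgroups.

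For (ii), I would use the fact that $G$ is totally minimal iff every quotient group $G/N$ (by a closed subgroup $N$) is minimal. By the Corollary following Proposition~\ref{Prop:exact}, closed subgroups $H$ of $G^*$ correspond exactly to quotient groups of $G$, with $(G/N)^* \cong N^\perp$ (via Lemma~\ref{Le:1}(ii), translated through the C--R duality) a closed subgroup of $G^*$; and conversely every closed subgroup of $G^*$ arises this way. By part (i), $G/N$ is minimal iff $(G/N)^*$, which is this closed subgroup of $G^*$, has no proper dense subgroup. So $G$ is totally minimal iff no closed subgroup of $G^*$ has a proper dense subgroup. The final step is the observation that a topological group $L$ has the property that every closed subgroup of $L$ has no proper dense subgroup if and only if every subgroup of $L$ is closed: if every subgroup is closed this is trivial; conversely, if $M\leq L$ is any subgroup, its closure $\overline M$ is a closed subgroup containing $M$ as a dense subgroup, which by hypothesis cannot be proper in $\overline M$, so $M=\overline M$ is closed. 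Applying this with $L=G^*$ gives (ii).

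\medskip

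\textbf{Main obstacle.} There is no real obstacle here; the content is entirely in correctly threading the Comfort--Ross duality dictionary (Theorems~\ref{Th:CRdual}, Propositions~\ref{Prop:1.9} and~\ref{Prop:exact}, and their corollaries) so that ``proper dense subgroup of $G^*$'' ends up matching ``$G$ admits a strictly coarser precompact Hausdorff group topology'' and hence ``$G$ is not minimal.'' The one point deserving a line of care is the reduction in (i) that \emph{every} continuous bijective homomorphism $\varphi\colon G\to H$, not just those landing in $\mathbf{PCA}$, is accounted for: since $G$ is precompact and $\varphi$ is continuous and surjective, $H$ is precompact, so $\varphi$ genuinely is a morphism in $\mathbf{PCA}$ and the duality applies. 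With that remark in place the proof is a short chain of equivalences, and the lemma is indeed essentially folklore.
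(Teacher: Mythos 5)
Your proof is correct and follows essentially the same route as the paper: part (i) is the translation of ``strictly coarser precompact Hausdorff topology'' into ``proper dense subgroup of $G^*$'' via Proposition~\ref{Prop:1.9} and Theorem~\ref{Th:CRdual}, and part (ii) combines (i) with the correspondence between quotients of $G$ and closed subgroups of $G^*$, finishing with the observation that every closed subgroup lacking a proper dense subgroup is equivalent to every subgroup being closed. The extra remarks you include (that any Hausdorff continuous bijective image of a precompact group stays in $\mathbf{PCA}$, and that every closed subgroup of $G^*$ really is the dual of a quotient) only make explicit what the paper's terser proof leaves implicit.
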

\begin{proof} (i) By the definition of minimal groups, a precompact abelian group $G$ is minimal if and only if it does not admit a strictly coarser group topology,
equivalently, if and only if $G^*$ does not admit a proper dense subgroup, by Proposition \ref{Prop:1.9}.

(ii) A precompact abelian group $G$ is totally minimal if and only if every quotient group of $G$ is minimal, which is equivalent to that no closed subgroup of $G^*$ contains a proper dense subgroup.
This is exactly the case that every subgroup of $G^*$ is closed.
\end{proof}

\begin{theorem} There exists a minimal abelian group $G$ of which every infinite metrizable quotient group has weight $\geq \mathfrak{c}$.\end{theorem}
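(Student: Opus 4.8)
The plan is to build $G$ via the Comfort--Ross duality, so it suffices to construct a precompact abelian group $D = G^*$ with two properties: (a) $D$ has no proper dense subgroup (so $G$ is minimal, by Lemma~\ref{Le:mini}(i)); and (b) every closed countable subgroup of $D$ has the Bohr topology fails — wait, rather we need: every \emph{infinite metrizable} quotient of $G$ corresponds to a closed subgroup $N$ of $D$ with $D/N$ metrizable, i.e. $D/N$ having countable weight; equivalently $N$ is a closed $G_\delta$ (more precisely, a closed subgroup with $w(D/N)=\omega$, so $|N^\perp|=\omega$ in the dual). So by Proposition~\ref{Prop:wei}, ``every infinite metrizable quotient of $G$ has weight $\geq\mathfrak c$'' translates to: every closed subgroup $N$ of $D=G^*$ with $D/N$ metrizable satisfies $|N|\geq\mathfrak c$, i.e. $N$ has weight $\geq\mathfrak c$ as a precompact group. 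So I want a minimal precompact abelian group $D$ in which every closed $G_\delta$-type subgroup (closed subgroup with countable-weight quotient) is ``large''.

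First I would use the standard recipe for minimal abelian groups: $D$ minimal precompact means $D$ embeds densely and essentially into its completion $\varrho D$, and one classically takes $\varrho D$ to be something like $\T^{\mathfrak c}$ (or a suitable compact connected group) and $D$ a dense essential subgroup that is, additionally, ``relatively minimal'' — e.g. using that a dense subgroup $D$ of compact $K$ is essential iff it meets every nontrivial closed subgroup of $K$. The cleanest construction mirrors Theorem~\ref{Th:1} / Proposition~\ref{Prop:2.3}: let $X=\{x_\alpha:\alpha<\mathfrak c\}$, let $F(X)$ be the free abelian group on $X$, enumerate as $\{Y_\beta : \beta<\mathfrak c\}$ the countable subsets of $X$, and for each $\beta$ fix a homomorphism $f_\beta\colon F(X)\to Q_\beta\hookrightarrow\Q^{(\mathfrak c)}\hookrightarrow\T$ with $\ker f_\beta = F(Y_\beta)$, the $Q_\beta$ independent copies of $\Q^{(\mathfrak c)}$. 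Take $D$ to be the subgroup of $\widehat{F(X)}\cong\T^{\mathfrak c}$ generated by $\{f_\beta:\beta<\mathfrak c\}$ together with enough extra characters to kill all proper dense subgroups — in fact, to get minimality I should instead arrange $D$ so that $G=D^*$ admits no strictly coarser Hausdorff group topology, which by Proposition~\ref{Prop:1.9} is exactly ``$D$ has no proper dense subgroup''. A concrete way to force the latter: make $D$ the subgroup of $\widehat{F(X)}$ generated by the $f_\beta$'s and one injective character $f$ (so that $\langle f\rangle$ is dense), then check by a counting/independence argument (as in the Claim in Proposition~\ref{Prop:2.3}) that $D$ itself has no proper dense subgroup because every nonzero element has essentially the same kernel behaviour.

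The key computation is then: if $N$ is a closed subgroup of $D$ with $D/N$ metrizable, then $\widehat{F(X)}/\overline N$ is metrizable, so by Lemma~\ref{Le:1} $\overline N = M^\perp$ where $M=N^\perp\leq F(X)$ is countable; write $M\subseteq F(Y)$ for some countable $Y\subseteq X$. Now I want to show $N$ must contain many $f_\beta$'s — precisely, $N \supseteq \{f_\beta : Y_\beta \supseteq Y\}$, and since there are $\mathfrak c$-many countable sets $Y_\beta$ containing a fixed countable $Y$, and these $f_\beta$ are independent, $N$ is free of rank $\mathfrak c$, hence $w(N)=|N|=\mathfrak c$. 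Dualizing, the corresponding metrizable quotient of $G$ has weight $|N| = \mathfrak c$. So I must verify: (i) $f_\beta \in \overline N = M^\perp$ whenever $F(Y_\beta)\supseteq M$ — this is immediate since $f_\beta$ kills $F(Y_\beta)\supseteq M$; and (ii) $\overline N \cap D = N$, i.e. $N$ is closed in $D$, which is given; combined with $D \cap M^\perp \supseteq \{f_\beta : Y_\beta\supseteq Y\}$, I get the lower bound.

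The main obstacle I anticipate is reconciling \emph{minimality} with the \emph{largeness of metrizable quotients}: forcing $D$ to have no proper dense subgroup typically means adding generators to $D$, but every generator I add is an element whose closure (cyclic, then its saturation) could create a \emph{small} closed subgroup whose quotient is metrizable of small weight, contradicting what I want. So the delicate point is choosing the extra characters (needed for minimality) with kernels that are still ``co-countable-generated'' only on large sets, so that no closed subgroup with metrizable quotient can be small. I expect this is handled exactly as in Proposition~\ref{Prop:2.3}: by using an almost-disjoint-type family and a careful independence/support argument showing every nonzero element of $D$ has kernel of the form $\bigcap F(Y_{\beta_i})$ up to a small set, forcing any closed proper subgroup with metrizable quotient to absorb a full ``tail'' $\{f_\beta : Y_\beta\supseteq Y\}$ of size $\mathfrak c$. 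Once that structural lemma is in place, both minimality (via Lemma~\ref{Le:mini}(i) and the independence argument) and the weight bound fall out, and it remains only to note $G=D^*$ is the desired minimal abelian group, infinite since $D$ is.
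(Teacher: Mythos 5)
There are two genuine gaps here, and the second one is fatal to the whole strategy. First, your dualization of the target property is wrong. An infinite metrizable quotient $K$ of $G$ corresponds under Comfort--Ross duality to the closed subgroup $K^*$ of $D=G^*$, and $w(K)=|K^*|$; since metrizable precompact groups are exactly those of countable weight, ``$G$ has an infinite metrizable quotient'' means ``$D$ has a \emph{countably infinite closed subgroup},'' and the theorem amounts to producing a minimal dual $D$ all of whose infinite closed subgroups have cardinality $\geq\mathfrak c$. Your key computation instead analyses closed subgroups $N$ with $D/N$ metrizable and tries to show $|N|\geq\mathfrak c$; that is the dual of a different statement (about quotients of $G$ by \emph{countable} closed subgroups), and proving it does not exclude a small countably infinite closed subgroup of $D$, which is what would produce a forbidden quotient of $G$.

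Second, and more seriously, no construction of the type you propose can be minimal. You need $D=G^*$ to have \emph{no proper dense subgroup} (Lemma \ref{Le:mini}(i)), but any nontrivial free abelian dense subgroup $D$ of $\T^{\mathfrak c}$ contains $2D$, which is a proper subgroup (as $D/2D\neq 0$ for a free group) and is dense, since $\overline{2D}\supseteq 2\,\overline{D}=2\,\T^{\mathfrak c}=\T^{\mathfrak c}$ by divisibility of the connected completion. Worse, your specific device for ``forcing minimality''---adjoining an injective character $f$ so that $\hull{f}$ is dense---manufactures a proper dense subgroup $\hull{f}$ of $D$ and thus destroys minimality outright. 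This obstruction is exactly why the paper abandons the free/torsion-free setting here: it takes $H=\Z(4)^\omega$ and refines its topology so that every homomorphism into $\Z(2)$ is continuous; then every proper subgroup of $H$ lies in a closed index-$2$ subgroup (so $H$ has no proper dense subgroup and $G=H^*$ is minimal), while every infinite closed subgroup of $H$ meets the compact socle $N=\{h:2h=0\}$ in an infinite compact set and therefore has cardinality $\geq\mathfrak c$, which dualizes to every infinite quotient of $G$ having weight $\geq\mathfrak c$. A bounded-torsion completion is essential to reconcile minimality with the largeness of closed subgroups; your connected, torsion-free framework cannot deliver both.
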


\begin{proof} Let $\Z(4)$ be the cyclic group of order $4$ and $H$ the compact group $\Z(4)^\omega$.
Strengthen the topology of $H$ by letting all homomorphisms from $H$ to the cyclic group $\Z(2)$ of order $2$ continuous.
Let $N$ be the socle of $H$, i.e., $N=\{h\in H:2h=0\}$.
Since the kernel of every homomorphism of $H$ to $\Z(2)$ contains $N$, the new topology coincides with the original one on $N$.
In the following, $H$ is always assumed to have the new topology we just obtained.

Let $M$ be a proper subgroup of $H$.
Then $M$ is contained in a subgroup $M'$ of $H$ with index $2$ in $H$.
The choice of the topology on $H$ yields that $M'$ is closed, in particular, $M$ is not dense in $H$.
So $H$ does not admit a proper dense subgroup.
Let $K$ be an infinite closed subgroup of $H$.
Then $K\cap N$ is infinite.
Note that $N$ is compact, so $K\cap N$ is also compact and $|K\cap N|\geq \mathfrak{c}$.

Let $G=H^*$.
According to Lemma \ref{Le:mini}, $G$ is minimal.
Since every infinite closed subgroup of $H$ has cardinality $\geq \mathfrak{c}$, we know that every infinite quotient group of $G$ has weight $\geq \mathfrak{c}$.
\end{proof}
\appendix

\section{Separable Quotient Problem for Non-totally Disconnected Locally Compact Groups}

In this appendix, we give an answer to the following question raised by Leiderman, Morris and Tkachenko in \cite{LMT}, namely, the separable quotient problem for non-totally disconnected locally compact group.
\begin{question}\cite[Problem 1.29]{LMT}\label{lc} Does every non-totally disconnected locally compact group $G$ have a separable quotient group which is (i) non-trivial; (ii) infinite;
(iii) metrizable; or (iv) infinite metrizable?\end{question}

Recall that again every connected locally compact group admits a non-trivial separable metrizable quotient group.
However, not all locally compact groups satisfy separable (metrizable) quotient theorem.
Indeed, in \cite[Corollary 3.3]{Wil}, for any infinite cardinal $\tau$, Willis constructed a totally disconnected topologically simple group $G$ with an open compact subgroup $U$ such that $w(U)=\tau$.
Thus, if $\tau$ is large enough, i.e., $\tau>\mathfrak{c}$, then $U$ is not separable or metrizable.
Since $U$ is open in $G$, $G$ fails to be separable or metrizable as well.
Being also topologically simple, $G$ satisfies neither separable quotient theorem nor metrizable quotient theorem.

Since the connected component of the identity of a topological group is closed and normal, locally compact topologically simple groups are either connected or totally disconnected.
For the connected case, by the solution of Hilbert's fifth problem, such groups must be Lie; so they are metrizable and separable.
Therefore, a negative answer to Question \ref{lc} cannot be obtained by constructing a topologically simple group.
But still, we can use the construction of Willis.
Our following theorem answers Question \ref{lc} (i), so the whole question, in negative.
\begin{theorem}\label{thapp}For every infinite cardinal $\mu$, there exists a non-totally disconnected locally compact group $G$ of which all non-trivial quotient groups have local weight $\geq 2^\mu$ and density $\geq \mu$.\end{theorem}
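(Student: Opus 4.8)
The plan is to take $G$ to be a semidirect product $N\rtimes W$ in which $W$ is a topologically simple totally disconnected locally compact group of large local weight (produced by Willis's construction), and $N$ is a compact \emph{connected} group that $W$ acts on by permuting coordinates, chosen so that there are no closed normal subgroups strictly between the trivial group and $N$. Fix an infinite cardinal $\mu$ and set $\kappa=2^{2^{\mu}}$; by Cantor's theorem $2^{\mu}<\kappa$, hence $2^{\nu}\le 2^{\mu}<\kappa$ for all $\nu<\mu$, so $\ln\kappa\ge\mu$, and also $\kappa\ge 2^{\mu}$. By \cite[Corollary~3.3]{Wil} pick a topologically simple totally disconnected locally compact group $W$ with a compact open subgroup $U$ of weight $\kappa$; since a topologically simple infinite profinite group would be finite, $W$ is not compact, so $U\ne W$ and $[W:U]\ge\omega$. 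Let $L=\mathrm{SO}(3)$ (any compact connected simple Lie group with trivial centre will do), put $I=W/U$ and $N=L^{I}$, and let $W$ act on $N$ by permuting the coordinates via its left translation action on $I$. The point stabilisers $gUg^{-1}$ are open, so the action is continuous and $G:=N\rtimes W$ is locally compact, with $V:=N\rtimes U$ a compact open subgroup satisfying $w(V)=w(N)\,w(U)=[W:U]\cdot\kappa\ge\kappa$. Since $N$ is connected, nontrivial and contained in $G$ (and $G/N\cong W$ is totally disconnected), the identity component of $G$ is exactly $N\ne\{e\}$, so $G$ is not totally disconnected.

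The heart of the proof is the claim that \emph{the only proper closed normal subgroups of $G$ are the trivial one and $N$}. Let $M$ be a proper closed normal subgroup of $G$ and let $\pi\colon G\to W$ be the projection killing $N$. The subgroup $M\cap N$ is closed and normal in $N=L^{I}$; by the standard rigidity of normal subgroups of a power of a perfect simple group (if the $i$-th coordinate projection of $M\cap N$ is all of $L$, then commuting elements of $M\cap N$ against the $i$-th factor and using $[L,L]=L$ puts the whole $i$-th factor into $M\cap N$), $M\cap N=L^{J}$ for some $J\subseteq I$, and since $M\cap N$ is normal in $G$ the set $J$ is $W$-invariant, hence a union of $W$-orbits of $I$; but $I=W/U$ is a \emph{single} orbit, so $J\in\{\varnothing,I\}$. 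If $M\cap N=N$, then $M/N$ is a closed normal subgroup of $W\cong G/N$, so $M/N$ is trivial by topological simplicity (as $M\ne G$) and $M=N$. If $M\cap N=\{e\}$, then $\pi|_{M}$ is injective, so $\overline{\pi(M)}$ is a closed normal subgroup of $W$, hence trivial or $W$; in the first case $M\subseteq N$, so $M=\{e\}$.

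The remaining case — $M\cap N=\{e\}$ with $\pi(M)$ dense in $W$ — I would rule out as follows. Injectivity of $\pi|_{M}$ gives, for each $w\in\pi(M)$, a unique $\phi(w)\in N$ with $(\phi(w),w)\in M$. Conjugating $(\phi(w),w)$ by an arbitrary $(n,e)\in N$, a short computation together with normality of $M$ and the uniqueness of $\phi(w)$ forces $w\cdot n=\phi(w)^{-1}n\,\phi(w)$ for every $n\in N$; that is, $w$ acts on $N=L^{I}$ as the inner automorphism of $N$ given by $\phi(w)^{-1}$. But $w$ acts on $N$ by a permutation of coordinates, and a \emph{nontrivial} permutation of coordinates of $L^{I}$ is never inner (conjugation by $(h_{i})_{i}$ acts coordinatewise, so it cannot move the value of one coordinate to a different coordinate — test against an element supported at a single coordinate). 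The normal core $\bigcap_{g}gUg^{-1}$ of $U$ in $W$ is closed, normal and contained in $U\ne W$, hence trivial, so every $w\ne e$ moves some coset of $U$; and $\pi(M)$, being dense in the nontrivial group $W$, contains such a $w$ — a contradiction. Hence every nontrivial quotient of $G$ is topologically isomorphic either to $G$ or to $W$. Each of these contains a compact open subgroup ($V$, respectively $U$) of weight $\ge w(U)=\kappa\ge 2^{\mu}$, whence each has local weight $\ge 2^{\mu}$; and since that compact open subgroup is an infinite compact group, its density equals $\ln$ of its weight, which is $\ge\ln\kappa\ge\mu$, so $G$ and $W$ both have density $\ge\mu$ (an open subgroup being no denser than the ambient group). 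I expect the third step to be the real obstacle: it is precisely there that the non-abelian, coordinate‑permuting structure of $N$ is indispensable, since a topologically simple non-abelian $W$ admits no continuous action on a connected \emph{abelian} group that would make the same argument run.
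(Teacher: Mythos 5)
Your construction is essentially the paper's: a wreath product $L^{W/U}\rtimes W$ of a compact connected simple group by a Willis group, followed by the same classification of closed normal subgroups (rigidity of normal subgroups of $L^{I}$, transitivity of $W$ on $I=W/U$, and the dichotomy $M\cap N=N$ versus $M\cap N=\{e\}$), and your cardinal bookkeeping with $\kappa=2^{2^{\mu}}$ is correct. The only local difference is how the subcase $M\cap N=\{e\}$ with $\pi(M)$ dense is excluded: the paper shows the resulting splitting $K=N^{X}\times M$ would force every $N^{Z}$ to be normal, contradicting the earlier classification, whereas you show directly that such an $M$ would make a nontrivial coordinate permutation act as an inner automorphism of $L^{I}$, which is impossible; both arguments are sound.
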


Our main tool is the \emph{wreath product} of topological groups.
Let $G, H$ be groups, $X$ a set.
Let also $\varphi: G\times X\to X$ be a group action.
This action induces an action $\varphi'$ of $G$ on $H^X$ by automorphisms by letting
$g\cdot(h_x)=(h_{gx})$, i.e, the $x$-th coordinate of $g\cdot (h_x)$ is $h_{gx}$.
The semidirect product $H^X\rtimes_{\varphi'} G$ is denoted by $H\wr_\varphi G$ (or, briefly, $H\wr G$) and called the \emph{wreath product} of $H$ by $G$ (via $\varphi$).

We shall generalize this definition to topological groups, with the special case that $X$ is discrete.
When we say a topological space $X$ is a $G$-space for a topological group $G$, we mean there exists an action $\varphi: G\times X\to X$ by self-homeomorphisms which is a continuous with respect to the product topology.
\begin{theorem}\label{th1}Let $G, H$ be topological groups and $X$ a discrete $G$-space.
Then the wreath product $H\wr G=H^X\rtimes G$ with the natural product topology is a topological group.\end{theorem}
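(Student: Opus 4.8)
The plan is to reduce the statement to the standard fact that a semidirect product $N\rtimes_\alpha Q$ of topological groups, given the product topology, is again a topological group provided the action $\alpha\colon Q\times N\to N$ is jointly continuous. Granting this, since $H^X$ is a topological group in the product topology and $H\wr G=H^X\rtimes_{\varphi'}G$ is, as a topological space, just $H^X\times G$ with the product topology, everything reduces to showing that $\varphi'\colon G\times H^X\to H^X$, $(g,f)\mapsto g\cdot f$, is continuous. (The cited fact itself is routine: multiplication is $(f_1,g_1)(f_2,g_2)=\bigl(f_1\cdot(g_1\cdot f_2),\,g_1g_2\bigr)$ and inversion is $(f,g)^{-1}=\bigl(g^{-1}\cdot f^{-1},\,g^{-1}\bigr)$, and in each case every coordinate is a composite of $\varphi'$, of coordinate projections, and of the multiplication and inversion maps of $H^X$ and $G$, hence continuous.)

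So the crux is the continuity of $\varphi'$, and it is precisely here that the hypothesis that $X$ is \emph{discrete} is used --- for a non-discrete $G$-space the analogous shift action on the full product $H^X$ is in general not continuous. Fix $(g_0,f_0)\in G\times H^X$ and a basic open neighbourhood $W$ of $g_0\cdot f_0$; by definition of the product topology, $W$ is cut out by finitely many coordinates $x_1,\dots,x_n\in X$ and open sets $U_1,\dots,U_n\sub H$ with $(g_0\cdot f_0)(x_i)\in U_i$. For each $i$, the orbit map $g\mapsto g\,x_i$ is a continuous map $G\to X$ (a restriction of the action $\varphi$), and since $X$ is discrete it is therefore locally constant; intersecting the finitely many resulting neighbourhoods gives a single neighbourhood $V$ of $g_0$ on which $g\,x_i=g_0\,x_i$ for all $i\le n$.

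Finally, let $W_0\sub H^X$ be the basic open set consisting of those $f$ with $f(g_0 x_i)\in U_i$ for $i=1,\dots,n$ (open because each coordinate evaluation $H^X\to H$ is continuous); then $f_0\in W_0$, and for every $(g,f)\in V\times W_0$ one computes $(g\cdot f)(x_i)=f(g\,x_i)=f(g_0 x_i)\in U_i$ for each $i$, i.e.\ $g\cdot f\in W$. Thus $\varphi'(V\times W_0)\sub W$, which proves $\varphi'$ continuous and hence the theorem. The only step requiring any care is the passage from ``each orbit map is locally constant'' to ``one neighbourhood $V$ works for all of $x_1,\dots,x_n$ simultaneously'', which is handled by the finite intersection that is legitimate precisely because finiteness of the coordinate set is built into the product topology; notably, no completeness, compactness, or countability hypotheses enter anywhere.
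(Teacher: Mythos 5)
Your proof is correct and follows essentially the same route as the paper: both reduce the statement to the standard criterion that a semidirect product with the product topology is a topological group iff the action is jointly continuous, and both then verify continuity of the induced shift action $\varphi'$ by using discreteness of $X$ to make the orbit maps $g\mapsto gx$ locally constant. The only cosmetic difference is that you check continuity on basic open sets (finitely many coordinates at once) while the paper checks it on subbasic ones (a single coordinate), which changes nothing of substance.
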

\begin{proof} In \cite[Proposition 7.2.1]{DPS}, a criterion for a semidirect product of topological groups $A$ by $B$ (with the product topology) is given:
it is precisely the case that the group action $B\times A\to A$ by conjugations is continuous.
Thus, we only need to prove that the action $\varphi':G\times H^X\to H^X$ is continuous, where $\varphi$ is the continuous action of $G$ on $X$ and $\varphi'$ is induced by $\varphi$ in the natural way.

To see that $\varphi'$ is continuous, we take a non-empty open subset $U$ of $H^X$ and $g\in G, (h_x)\in H^X$ such that $g\cdot (h_x)=(h_{gx})\in U$.
Note that $H^X$ has the product topology.
So we may assume without loss of generality that $U=V^y:=\{(h_x)\in H^X, h_y\in V\}$, where $V$ is open in $H$ and $y\in X$, since the family $\{V^x: V\subseteq H~\mbox{open and~} x\in X\}$ forms a subbase of $H^X$.

Let $y'=g^{-1}y$ and $U'=V^{y'}$.
Since $X$ is discrete, there exists an open neighborhood $O$ of $g$ such that $g'y'=y$ for all $g'\in O$.
Thus, $$g'\cdot U'=g'\cdot V^{y'}=V^{g'y'}=V^y=U.$$
In other words, $\varphi'(O\times U')\subseteq U$.
So, $\varphi'$ is continuous.
\end{proof}

Recall that a connected, compact, and topologically simple group must be simple as an abstract group \cite[Theorem 9.9.6]{HM0}.
So we prefer to call such groups simple for convenience and there will be no confusion.
In the following, we shall see that closed normal subgroups of powers of connected, compact, simple groups are quite ``regular''.
For the sake of simplicity, for a product group $G=\prod_{x\in X}N_x$ and a subset $Y$ of $X$, we shall consider $\prod_{y\in Y}N_y$, in particular, each $N_x$, as a subgroup of $G$ in a natural way.

\begin{lemma}\label{leapp}Let $N$ be a non-trivial connected, compact, simple group and $X$ be any set.
Let also $G=\prod_{x\in X}N_x$, where each $N_x$ is a copy of $N$.
Then for every non-trivial closed normal subgroup $H$ of $G$, there exists a subset $Y$ such that $H=\prod_{y\in Y}N_y$.\end{lemma}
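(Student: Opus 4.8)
The plan is to exploit the simplicity of $N$ coordinate by coordinate. First I would fix a non-trivial closed normal subgroup $H$ of $G=\prod_{x\in X}N_x$ and, for each $x\in X$, consider the canonical projection $\pi_x\colon G\to N_x$. Since $\pi_x$ is a continuous surjective homomorphism and $H\trianglelefteq G$, the image $\pi_x(H)$ is a (not necessarily closed) normal subgroup of $N_x\cong N$; taking closures, $\overline{\pi_x(H)}$ is a \emph{closed} normal subgroup of the simple group $N_x$, hence equals either $\mathbf{0}$ or $N_x$. Define $Y=\{x\in X:\overline{\pi_x(H)}=N_x\}$. The claim is that $H=\prod_{y\in Y}N_y$, viewed inside $G$ in the natural way. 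The inclusion $H\subseteq\prod_{y\in Y}N_y$ is immediate, since for $x\notin Y$ the $x$-th coordinate of every element of $H$ lies in $\overline{\pi_x(H)}=\mathbf{0}$.

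The substantive direction is $\prod_{y\in Y}N_y\subseteq H$, and the key step is to show that each single factor $N_y$ with $y\in Y$ is already contained in $H$. Fix $y\in Y$. Since $\overline{\pi_y(H)}=N_y$, the subgroup $H\cap N_y$ (identifying $N_y$ with the obvious subgroup of $G$) is a normal subgroup of $G$, hence of $N_y$; moreover I would argue it is non-trivial and dense in $N_y$. Denseness should follow because the commutator trick: for $h\in H$ and $n\in N_y$, the commutator $[h,n]=h n h^{-1} n^{-1}$ lies in $H\cap N_y$ (as $H$ is normal and $N_y$ is normal, and the two factors commute elementwise for coordinates other than $y$), and as $n$ ranges over $N_y$ and the $y$-coordinate of $h$ ranges over a dense subset of $N_y$, these commutators generate a dense subgroup of $N_y$ — here one uses that a connected compact simple group is topologically perfect (indeed equal to its own commutator subgroup) and has trivial center, so conjugation by a dense set of elements produces a dense subgroup. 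Then $H\cap N_y$ is a dense \emph{closed} (being the intersection of the closed set $H$ with the closed set $N_y$) subgroup of $N_y$, hence $H\cap N_y=N_y$, i.e. $N_y\subseteq H$.

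Once every $N_y$ with $y\in Y$ lies in $H$, the algebraic subgroup $\bigoplus_{y\in Y}N_y$ (finite support) is contained in $H$; since $H$ is closed and $\bigoplus_{y\in Y}N_y$ is dense in $\prod_{y\in Y}N_y$ with the product topology, we conclude $\prod_{y\in Y}N_y\subseteq H$, completing the reverse inclusion and hence the proof. The main obstacle I anticipate is the denseness argument in the previous paragraph: one must be careful that $H$ need not itself be a product, so $\pi_y(H)$ could a priori be a proper dense subgroup of $N_y$, and the passage from ``$\pi_y(H)$ dense in $N_y$'' to ``$H$ actually contains $N_y$'' genuinely needs the commutator computation together with the structural facts about $N$ (topological perfectness and trivial center, which follow from $N$ being connected, compact, and simple as an abstract group, as recalled just before the lemma). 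The commuting-coordinates bookkeeping — verifying that $[h,n]$ for $n\in N_y$ has all coordinates outside $y$ equal to the identity — is routine but is where one must not slip.
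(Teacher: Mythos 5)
Your proof is correct and follows essentially the same route as the paper: project onto each factor, use simplicity of $N$ to define $Y$, show each $N_y$ with $y\in Y$ meets $H$ non-trivially via commutators $hnh^{-1}n^{-1}\in H\cap N_y$, and finish by combining density of $\bigoplus_{y\in Y}N_y$ in $\prod_{y\in Y}N_y$ with closedness of $H$. The only (minor) divergence is in the middle step: where you invoke topological perfectness of $N$ to make the commutators dense in $N_y$, the paper gets by with just non-abelianness, choosing a single pair $a,b\in N_y$ with $aba^{-1}b^{-1}\neq 1$ and a preimage $h\in H$ of $b$ to conclude $N_y\cap H\neq\mathbf{1}$, which by simplicity already forces $N_y\subseteq H$.
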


\begin{proof}Let $p_x: G\to N_x$ be the canonical projection for each $x\in X$.
Since $p_x(H)$ is a normal subgroup of the simple group $N_x$, $H_x:=p_x(H)$ equals either $N_x$ or $\mathbf{1}$.
Let us denote by $Y$ the subset of $X$ consisting all the elements $x\in X$ with $H_x=N_x$.
Then we have
\begin{equation}\label{e1} H\subseteq \prod_{y\in Y}N_y.\end{equation}
Clearly, $Y$ is non-empty since $H$ is non-trivial.

For each $y\in Y$, we claim that $N_y\subset H$, which is equivalent to that $N_y\cap H\neq \mathbf{1}$.
Suppose for a contradiction that there exists $y\in Y$ such that $N_y$ trivially meets $H$.
Note that non-trivial connected compact simple groups are non-abelian.
So there exists $a, b\in N_y$ such that $aba^{-1}b^{-1}\neq 1$.
Take $h\in H$ with $p_y(h)=b$.
Since both $N_y$ and $H$ are normal in $G$, one has that
$$aha^{-1}h^{-1}\in N_y\cap H=\mathbf{1}.$$
Thus, we have that $1=p_y(aha^{-1}h^{-1})=aba^{-1}b^{-1}$, which induces a contradiction.
Now we proved the claim that each $N_y$ is contained in $H$.
Since $\{N_y:y\in Y\}$ generates a dense subgroup of $\prod_{y\in Y}N_y$ and $H$ is compact, the inclusion
\begin{equation}\label{e2} \prod_{y\in Y}N_y\subseteq H\end{equation}
holds.
By (\ref{e1}) and (\ref{e2}), we complete the proof.
\end{proof}
\begin{theorem}\label{th2}Let $N$ be a non-trivial connected, compact, simple group, $G$ a topologically simple group with a proper open subgroup $H$ and $X=G/H$ the (discrete) left coset space.
Then the wreath product $K=N\wr G=N^X\rtimes G$ via the left translation of $G$ on $X$ is a topological group.
Moreover $\mathbf{1}$, $N^X$ and $K$ are the only closed normal subgroups of $K$.\end{theorem}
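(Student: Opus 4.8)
The first assertion is immediate from Theorem~\ref{th1}: with the discrete topology, $X=G/H$ is a $G$-space via left translation (continuity being automatic since $X$ is discrete), so $K=N^X\rtimes G$ with the product topology is a topological group. For the rest, I would write elements of $K$ as pairs $(a,g)$ with $a\in N^X$, $g\in G$, with multiplication $(a,g)(b,h)=(a\,(g\cdot b),gh)$, where $g\cdot b$ is the permutation action of $G$ on $N^X=\prod_{x\in X}N_x$ coming from the action of $G$ on $X$. The relevant elementary observations are: $N^X$ is compact (Tychonoff) and normal in $K$; conjugation by $(e,g)$ on $N^X$ is exactly $a\mapsto g\cdot a$; and $(a,g)\mapsto g$ is a continuous open surjective homomorphism $K\to G$ with kernel $N^X$, so $K/N^X\cong G$ as topological groups. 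Now let $L$ be a closed normal subgroup of $K$.

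First I would pin down $L\cap N^X$. Since $L$ and $N^X$ are both normal in $K$, the intersection $L\cap N^X$ is a closed normal subgroup of $N^X=\prod_{x\in X}N_x$ that is invariant under the $G$-action; by Lemma~\ref{leapp} it equals $\prod_{y\in Y}N_y$ for some $Y\subseteq X$, and $G$-invariance forces $Y$ to be a $G$-invariant subset of the transitive $G$-set $X$, whence $Y=\varnothing$ or $Y=X$. Thus $L\cap N^X\in\{\mathbf 1,N^X\}$. Next I would pin down $LN^X$: as $L$ is closed and $N^X$ compact, $LN^X$ is a closed normal subgroup of $K$ containing $N^X$, and under the correspondence between such subgroups and closed normal subgroups of $K/N^X\cong G$, topological simplicity of $G$ gives $LN^X=N^X$ or $LN^X=K$.

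Combining these: if $LN^X=N^X$ then $L\subseteq N^X$, so $L\in\{\mathbf 1,N^X\}$; if $LN^X=K$ and $N^X\subseteq L$, then $L=K$. The one case that must be excluded — and where I expect the real work — is $LN^X=K$ together with $L\cap N^X=\mathbf 1$. In that case $\pi\colon K\to G$ restricts to a bijection $L\to G$, so there is a (set) map $\sigma\colon G\to N^X$ with $L=\{(\sigma(g),g):g\in G\}$; expressing that $L$ is stable under conjugation by $(n,e)$ with $n\in N^X$ and comparing $N^X$-components yields $\sigma(g)^{-1}n\,\sigma(g)=g\cdot n$ for all $n\in N^X$ and $g\in G$. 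But conjugation inside $N^X=\prod_xN_x$ is coordinatewise, hence support-preserving, whereas for any $g\in G\setminus H$ (which exists because $H$ is proper) the automorphism $a\mapsto g\cdot a$ carries the coordinate $eH$ to $gH\neq eH$; choosing $n$ supported at the single coordinate $eH$ with $n_{eH}\neq e$ (possible since $N\neq\mathbf 1$) makes the two sides of that identity differ at the coordinate $eH$, a contradiction. So this case cannot occur, and $L\in\{\mathbf 1,N^X,K\}$ in all cases. The main obstacle is precisely ruling out such ``graph-type'' closed normal subgroups projecting isomorphically onto $G$; what makes it go through is the incompatibility between coordinatewise conjugation in $N^X$ and the genuinely nontrivial coordinate permutation of the $G$-action on $G/H$, which is exactly where the hypotheses $H\subsetneq G$ and $N\neq\mathbf 1$ are used. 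The other steps are routine given Lemma~\ref{leapp}, compactness of $N^X$, and topological simplicity of $G$.
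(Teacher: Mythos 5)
Your proof is correct, and its skeleton coincides with the paper's: Lemma~\ref{leapp} together with transitivity of $G$ on $X=G/H$ forces $L\cap N^X\in\{\mathbf{1},N^X\}$; compactness of $N^X$ plus topological simplicity of $G$ controls the image in $K/N^X\cong G$; and the whole difficulty is the exclusion of the case $L\cap N^X=\mathbf{1}$, $LN^X=K$. The only genuine divergence is in that exclusion step. The paper argues structurally: such an $L$ would make $K$ the internal direct product of $N^X$ and $L$, so every element of $K$ would conjugate $N^X$ coordinatewise, making $N^Z$ normal in $K$ for every $Z\subseteq X$; choosing $\varnothing\neq Z\neq X$ (possible since $|X|\geq 2$) contradicts the already-established fact that the only closed normal subgroups of $K$ contained in $N^X$ are $\mathbf{1}$ and $N^X$. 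You instead write $L$ as the graph of a map $\sigma\colon G\to N^X$, extract the commutation identity $\sigma(g)^{-1}n\sigma(g)=g\cdot n$ from normality under $N^X$-conjugation, and refute it by a support count: the left side preserves the support of $n$ while the right side translates it, and for $g\notin H$ the coordinate $eH$ genuinely moves. The two arguments exploit the same commutation relation in two ways; yours is more hands-on and makes explicit exactly where the hypotheses $N\neq\mathbf{1}$ and $H\subsetneq G$ enter, while the paper's avoids coordinates by recycling its classification of normal subgroups inside $N^X$. (The only cosmetic slip in your writeup is whether the support of $g\cdot n$ lands at $gH$ or at $g^{-1}H$ under the paper's convention $(g\cdot(h_x))_x=h_{gx}$; either coset differs from $eH$ when $g\notin H$, so the contradiction stands.)
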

\begin{proof}
The assertion that $K$ is a topological group follows from Theorem \ref{th1} immediately since the left translation $G\times X\to X$ is continuous.

Let $M$ be a closed normal subgroup of $K$.
By Lemma \ref{leapp}, there exists a subset $Y$ of $X$ such that $M\cap N^X=N^Y$ ($N^\varnothing$ is considered as $\mathbf{1}$).
We claim that $Y$ is either $\varnothing$ or the whole $X$.
Suppose for the contrary that there are $x\in X\setminus Y$ and $y\in Y$.
Take $g\in G$ with $gx=y$.
Let $\alpha=(1, g)\in N^X\rtimes G=K$.
Since $N^Y$ is the intersection of normal subgroups of $K$, it is normal in $K$ as well.
Thus $$N^Y=\alpha N^Y \alpha^{-1}=g\cdot N^Y=N^{gY}.$$
So $Y=gY$ and therefore $x=g^{-1}y\in g^{-1}Y=Y$.
This contradicts to the choice of $x$ and $y$.

Now the problem divides into the two cases:

\noindent{\bf Case 1.} $Y=X$.

When in this case, $M$ contains $N^X$.
Since $N^X$ is compact, $M/(N^X)$ is a closed normal subgroup of the topologically simple group $K/(N^X)\cong G$;
thus either $M=N^X$ or $M=G$.

\noindent{\bf Case 2.} $Y=\varnothing$, i.e., $M\cap N^X=\mathbf{1}$.

Let $\pi: K\to K/(N^X)\cong G$ be the canonical quotient mapping.
Then $\pi$ is closed since $N^X$ is compact and therefore $\pi(M)$ is a closed normal subgroup of $G$.
Note that the restriction of $\pi$ to $M$ is injective since $M\cap N^X$ is trivial.
So if $\pi(M)$ is trivial, then also is $M$.
If $\pi(M)$ is non-trivial, it must equals $G$.
Then the subgroup $(N^X)M$ of $K$ equals $K$.
Note that $N^X$ and $M$ are normal subgroups of $K$, $K$ is the direct product of $N^X$ and $M$.
This implies that $N^Z$ is normal in $K$ for any subset $Z$ of $X$.
Since $|X|\geq 2$, $Z$ can be chosen to be distinct from either $\varnothing$ or $X$.
While, this is a contradiction since our analysis yields that closed normal subgroups of $K$ contained in $N^X$ equals either $N^X$ or $\mathbf{1}$.
Thus, $\pi(M)=G$ is impossible in this case.

In summary, we obtain that $M=N^X$, $G$ or $\mathbf{1}$.
\end{proof}

\begin{proof}[\bf Proof of Theorem \ref{thapp}]
For a topological group $X$, we denote by $\chi(X)$ the local weight of $X$ (at the identity).
By the result of Willis, there exists a totally disconnected, locally compact, topologically simple group $G$ with an open compact subgroup $H$ such that $w(H)=\tau$ with $\ln\tau\geq \mu$.
Let $X=G/H$ and $N$ a non-trivial connected, compact, simple group.
The non-totally disconnected locally compact group $K=N\wr G$ defined as in Theorem \ref{th2} therefore has only three closed normal subgroups: $K$, $N^X$ and $\mathbf{1}$.
So $K$ admits only two non-trivial quotient groups: $K$ and $K/M^X\cong G$.
Since $G$ is a quotient group of $K$, one has $\chi(K)\geq \chi(G)$ as well as $d(K)\geq d(G)$.
Moreover, since $H$ is open in $G$, one has $\chi(G)=\chi(H)=w(H)$ and $d(G)=d(H)$.
It remains to check that $\chi(H)=\tau\geq 2^\mu$ and $d(H)\geq \mu$.
The first inequality comes from $\ln \tau\geq \mu$.
To see the second, one should be noted that for every dense subset $X$ of $G$, $X\cap H$ is dense in $H$ since $H$ is open.
Hence $d(G)\geq d(H)$.
The the assertion follows from immediately the fact that every compact group $H$ satisfies $d(H)=\ln w(H)$.
\end{proof}

\section*{Acknowledgements}
The author appreciates very much the professional remarks from the referee which improve the paper a lot.
In a very early version of this paper, the bad using of notions brings confusions. The author is grateful to Professor Arkady Leiderman for pointing out that.
The author would like also to thank Professor Wei He for a number of helpful suggestions, and to Zhouxiang Huang, Haonan Qian, V\'{\i}ctor Hugo Ya\~{n}ez and Gao Zhang for listening to the report of the main results of this paper in a seminar.

\end{document}